\newtheorem{theorem}{Theorem}
\theoremstyle{definition}
\newtheorem{corollary}{Corollary}
\newtheorem{definition}{Definition}
\newtheorem{proposition}{Proposition}
\newtheorem{remark}{Remark}
\newtheorem{question}{Question}
\numberwithin{equation}{section}
\newcommand{\brak}[1]{\langle #1\rangle}
\DeclareMathOperator{\Hom}{Hom}
\DeclareMathOperator{\id}{Id}
\DeclareMathOperator{\rk}{rk}
\def\co{\colon\thinspace}
\begin{document}

\title{The $sl(2)$ foam cohomology via a TQFT}
\author{Carmen Caprau}

\address{Department of Mathematics, California State University, Fresno, CA 93740 USA}
\email{ccaprau@csufresno.edu}
\urladdr{}

\date{}
\subjclass[2000]{57M27; 19D23}
\keywords{Cobordisms, Frobenius Algebras, Link Invariants, TQFTs}
\thanks{The author was supported in part by NSF grant DMS 0906401}

\begin{abstract}
We construct a cohomology theory for oriented links using singular cobordisms and a special type of $2$-dimensional Topological Quantum Field Theory (TQFT), categorifying the quantum $sl(2)$ invariant. In particular, we give a description of the universal dot-free $sl(2)$ foam cohomology for links via a $2$-dimensional TQFT.

\end{abstract}
\maketitle

\section{Introduction}

The author constructed in~\cite{CC1} the universal $sl(2)$ tangle cohomology via dotted foams modulo a finite set of local relations, using Bar-Natan's~\cite{BN1} approach to local Khovanov homology and Khovanov's work in~\cite{Kh2}. We refer to this as the \textit{ universal $sl(2)$ foam cohomology}. The construction starts at the topological picture made of resolutions associated to an oriented tangle diagram, called \textit{webs}, and of dotted seamed cobordisms between webs, called \textit{foams}. A web is a disjoint union of piecewise oriented 1-manifolds containing $2k$ bivalent vertices ($k \geq 0$), so that for any two adjacent vertices, one is a sink and the other a source. A foam is a piecewise oriented cobordism between such webs, and might contain some dots (as Khovanov's foams in~\cite{Kh2}). To switch from the geometric world to an algebraic one, one applies a `tautological' functor and arrives at a cochain complex of modules and module homomorphisms whose cohomology is a bigraded tangle invariant. Restricting to the case of links and considering the graded Euler characteristic of this invariant, one recovers the quantum $sl(2)$ polynomial. Much as Bar-Natan~\cite{BN1} did in his  approach to Khovanov's homology for tangles, the invariance was proved at the level of the topological picture by considering the set of foams modulo local relations. 

The universal $sl(2)$ foam cohomology corresponds (in a certain way) to the Frobenius algebra structure defined on the ring $\mathcal{A} =R[ X]/(X^2 -hX -a)$, for which the counit and comultiplication maps are given in the basis $\{1, X\}$ by 
\begin{eqnarray}\label{Frobenius structure}
\begin{cases} \epsilon(1) = 0\\ \epsilon(X) = 1, \end{cases} \quad \begin{cases}
 \Delta(1) = 1 \otimes X + X \otimes 1-h 1 \otimes 1\\ 
 \Delta(X) = X \otimes X + a 1 \otimes 1.\end{cases}
 \end{eqnarray}
 The ground ring is $R =  \mathbb{Z}[i][a, h]$, where $a$ and $h$ formal parameters and $i$ is the primitive fourth root of unity. Imposing $a = h = 0$ we arrive at an isomorphic version of the original Khovanov homology~\cite{BN0, Kh1}. Moreover, letting $h = 0$ we obtain Bar-Natan's theory~\cite{BN1}, while setting $a = 1, h = 0$, Lee's theory~\cite{L} is recovered. 

In \cite{CC2} the author provided the tools that lead to efficient computations of the dotted foam  cohomology groups, and also showed that if $2^{-1}$ exists in the ground ring, then one can work with a purely topological version of the foam theory, where no dots are needed on foams. 

 The advantage of the $sl(2)$ foam cohomology versus the original Khovanov homology and Bar-Natan's  work in~\cite{BN1} is the well-defined functorial property with respect to tangle/link cobordisms relative to boundaries. In particular, it fixes the functoriality of the original Khovanov invariant (for details we refer the reader to~\cite{CC0, CC1}). We would like to remark that the same result was obtained by Clark, Morrison and Walker in \cite{CMW} (they used the term `disoriented' cobordisms for the 2-cobordisms which the author called foams). 
 
The original Khovanov homology uses a $2$-dimensional Topological Quantum Field Theory (TQFT) corresponding to a certain Frobenius algebra. It is then worthwhile to ask if one can obtain the $sl(2)$ foam cohomology by applying some type of a TQFT rather than a tautological functor, and then what the defining algebra structure of this TQFT is.

The purpose of this paper is to answer the above questions, thus to provide an algebraic framework for the universal $sl(2)$ foam cohomology for links using a special type of $2$-dimensional TQFT defined on foams (in doing so, we will consider the dot-free foam theory~\cite[Section 4]{CC2}). The first step in achieving this goal was made by the author in~\cite{CC3}, were she considered a particular type of foams, called \textit{singular 2-cobordisms}, and showed that the category $\textbf{Sing-2Cob}$ of singular 2-cobordisms admits a completely algebraic description as the free symmetric monoidal category on a, what the author called, \textit{twin Frobenius algebra}. A twin Frobenius algebra $(C, W, z, z^*)$ consists of a commutative Frobenius algebra $C,$ a symmetric Frobenius algebra $W,$ and an algebra homomorphism $z \co C \to W$ with dual $z^* \co W \to C,$ satisfying some additional conditions. The author also introduced in~\cite{CC3} a special type of 2-dimensional TQFT, so-called \textit{twin} TQFT, defined on singular 2-cobordisms and showed that it is equivalent to a twin Frobenius algebra in a symmetric monoidal category. 

The category $\textbf{Sing-2Cob}$ of singular 2-cobordisms, as considered in~\cite{CC3}, has as objects clockwise oriented circles and \textit{bi-webs}; a bi-web is a closed web with exactly two bivalent vertices. In this paper we enhance the category of singular cobordisms by allowing counterclockwise oriented circles as well, and thus working in a more general setup. We abuse of notation and denote the enhanced category by $\textbf{Sing-2Cob}$ as well. The algebraic structure of the enhanced category is coined by the term: \textit{enhanced twin Frobenius algebra}. Then we use this category to complete the second step in achieving the goal of constructing a cohomology theory for oriented links---via singular cobordisms and a certain type of 2-dimensional TQFT defined on them---which is isomorphic to the universal dot-free $sl(2)$ foam cohomology. We chose to work with the dot-free version of the foam theory as we are interested in a purely topological construction; the `downside' of this is the need of $2^{-1}$ in the ground ring.

It is clear that in searching for an answer to the above mentioned goal, one wishes to keep the well-defined functoriality of the $sl(2)$ foam cohomology. This is our reason for considering the general setup containing both clockwise and counterclockwise oriented circles, although the method described in this paper can be employed when working with simpler singular 2-cobordisms whose boundaries are bi-webs and circles with only one type of orientation. However, the author conjectures that by allowing both orientations for a circle, the resulting cohomology via a TQFT is properly functorial, while in the simpler case the functorial property holds only up to a sign.

The possible TQFT describing the $sl(2)$ foam cohomology should satisfy the local relations used in the construction of this theory. In~\cite[Example 1 of Section 5]{CC3} the author gave an example of a twin TQFT satisfying the $sl(2)$ foam local relations, but it failed short to satisfy the laws of a twin Frobenius algebra (to be specific, the ``genus-one condition'' of a twin Frobenius algebra holds with a minus sign). This tells us that, although the enhanced category $\textbf{Sing-2Cob}$ is governed by an enhanced twin Frobenius algebra, the algebraic structure that underlines the $sl(2)$ foam cohomology must be a minor modification of that of an enhanced twin Frobenius algebra. The key to finding the appropriate algebraic structure lies within the results of the foam theory: Let us recall that the local relations that one imposes on the set of foams imply a few other local relations, among whom there are the ``curtain identities" (CI-1) and (CI-2) (depicted here in Section~\ref{sec:univKh}). A particular form of the curtain identities are those given in equations~\eqref{imposed-relation1} and~\eqref{imposed-relation2}, hence we need to mod out the morphisms of the category $\textbf{Sing-2Cob}$ by these latter relations. We show that the new category, let us denote it by $\textbf{eSing-2Cob}$, admits an algebraic description as the free symmetric monoidal category on an \textit{identical twin Frobenius algebra} (we thought this is an appropriate name for the corresponding algebraic structure). An identical twin Frobenius algebra $(C, W, z_1, z_1^*,z_2, z_2^*)$ consists of two commutative Frobenius algebras $C$ and $W$,  and two algebra isomorphisms $z_1, z_2 \co C \to W$ with duals $z_1^*, z_2^* \co W \to C$ and inverse isomorphisms $iz_1^*, -iz_2^*$, respectively, where $i^2 = -1$. The 2-dimensional TQFT of interest to us is defined on the category $\textbf{eSing-2Cob}$, and is equivalent to an identical twin Frobenius algebra; hence we call it an \textit{identical twin TQFT}.

Here is a brief plan of the paper. Section~\ref{sec:univKh} provides a review of the construction giving rise to the universal dot-free $sl(2)$ foam cohomology. In Section~\ref{sec:twin Frobenius extensions} we set up the ground for the paper and introduce the category $\textbf{Sing-2Cob}$ of singular 2-cobordisms and its related category, $\textbf{eSing-2Cob}$. We also describe the algebraic structures of the two categories. Section~\ref{sec:TQFT} provides the (identical twin) TQFT that we are concerned with here, and shows that it satisfies the local relations used in the foam theory without dots. The most important part of the paper is Section~\ref{sec:link cohomology}, where we construct a cohomology for oriented links, isomorphic to the universal dot-free $sl(2)$ foam cohomology. The `topology to algebra' functor used in this construction is the TQFT of Section~\ref{sec:TQFT}. 


\section{Review of the universal dot-free $sl(2)$ foam cohomology}\label{sec:univKh}

 Let us briefly recall the construction of the universal $sl(2)$ foam cohomology introduced in~\cite{CC1}. For the purpose of this paper, we consider here the purely topological version of the foam theory in which no dots are present on foams (for more details we refer to~\cite[Section 4]{CC2}). The construction in~\cite{CC0, CC1, CC2} was given for the general case of tangles, but here we restrict our attention to links. 
 
We mentioned in the introduction that the ring $\mathcal{A} =R[ X]/(X^2 -hX -a)$, endowed with the Frobenius algebra structure defined by the counit and comultiplication maps given in~\eqref{Frobenius structure},  plays a key role in the universal $sl(2)$ foam cohomology. In the dot-free version of this theory, the ground ring is $R =  \mathbb{Z}[\frac{1}{2}, i][a, h]$, and it is graded by setting $\mbox{deg}(a) = 4, \,\, \mbox{deg}(h) = 2 \,\, \text{and} \,\, \mbox{deg} (1) = \mbox{deg}(i) = 0$. The ring $\mathcal{A} = \brak{1, X}_R$ is also graded by letting $\mbox{deg}(1) = -1$ and $\mbox{deg}(X) = -1$.

Given an $n$-crossing diagram $D$ representing an oriented link $L$, we form an $n$-dimensional cube $\mathcal{U}$ of resolutions (called `webs') and cobordisms between them (called `foams'). This cube is then flattened to a Bar-Natan type~\cite{BN1} \textit{formal complex} in the additive category $\textbf{Foam}_{/\tilde{\ell}}$ whose objects are formally graded webs and whose morphisms are formal linear combinations of foams, modulo certain local relations $\tilde{\ell}$. Some further degree and height shifts are applied, which depend only on the number of positive and negative crossings in $D$, to arrive at a formal complex $[D]$ living in the category \textit{Kof}: = Kom(Mat($\textbf{Foams}_{/\tilde{\ell}}$)) of complexes of formal direct sums of objects in $\textbf{Foams}_{/\tilde{\ell}}$, and which is an up-to-homotopy invariant.

The reason for working with foams (which are seamed or disoriented cobordisms) rather than ordinary cobordisms is that in the $sl(2)$ foam cohomology one categorifies the \textit{oriented} version of the quantum $sl(2)$ polynomial of $L$. The state summation for this polynomial is given by the formula $ P_2(L) = P_2(D) := \sum_{\Gamma} \pm q^{\alpha(\Gamma)}\brak{\Gamma}$, where the sum is over all resolutions $\Gamma$ of $D,$ and the exponents $\alpha(\Gamma)$ and the $\pm$ sign are determined by the relations:  
\begin{eqnarray} \label{eq:polyn_inv}
\raisebox{-8pt}{\includegraphics[height=0.3in]{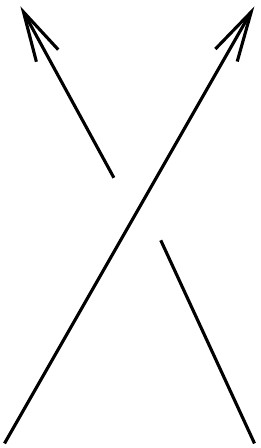}} \,= \,q\,\,\,\,\,\raisebox{-8pt} {\includegraphics[height=0.3in]{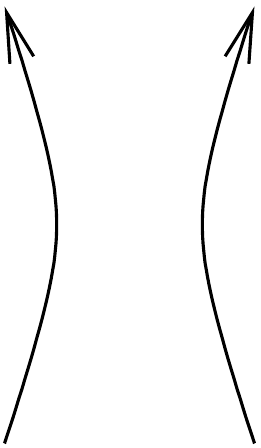}} - q^2\,\,\,\,\,\raisebox{-8pt} {\includegraphics[height=0.3in]{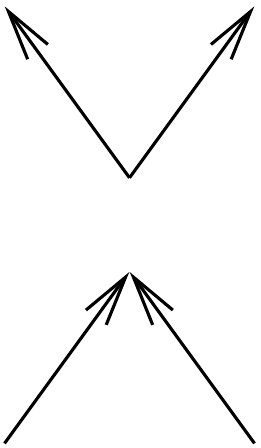}}\hspace{2cm}
\raisebox{-8pt}{\includegraphics[height=0.3in]{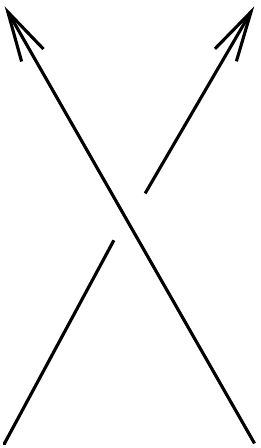}} \,= \,q^{-1}\,\raisebox{-8pt} {\includegraphics[height=0.3in]{orienres.pdf}} - q^{-2}\, \raisebox{-8pt} {\includegraphics[height=0.3in]{singres.pdf}}.
\end{eqnarray}
The \textit{bracket polynomial} $\brak{\Gamma}$ associated to a web $\Gamma$ is an element of the Laurent polynomial ring $\mathbb{Z}[q, q^{-1}],$ and it is evaluated via the skein relations:
\begin{eqnarray} \label{skein_circle}
\brak{\raisebox{-3pt}{\includegraphics[height=0.15in]{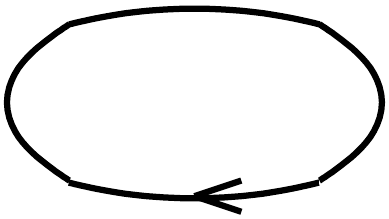}} \bigcup \Gamma} = (q + q^{-1}) \brak{\Gamma} =\brak{\raisebox{-3pt}{\includegraphics[height=0.15in]{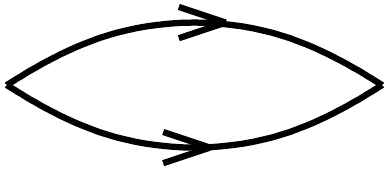}} \bigcup \Gamma},\\
\brak{\raisebox{-4pt}{\includegraphics[height=0.12in]{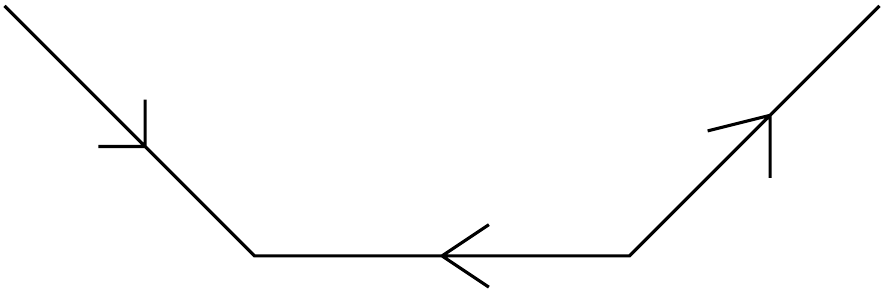}}} = 
\brak{\raisebox{-4pt}{\includegraphics[height=0.10in]{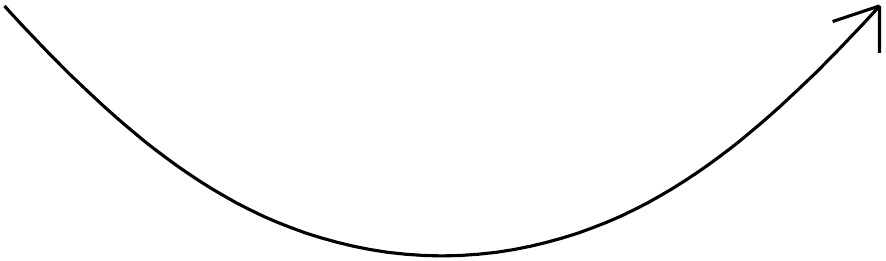}}}, \quad
\brak{\raisebox{-4pt}{\includegraphics[height=0.12in]{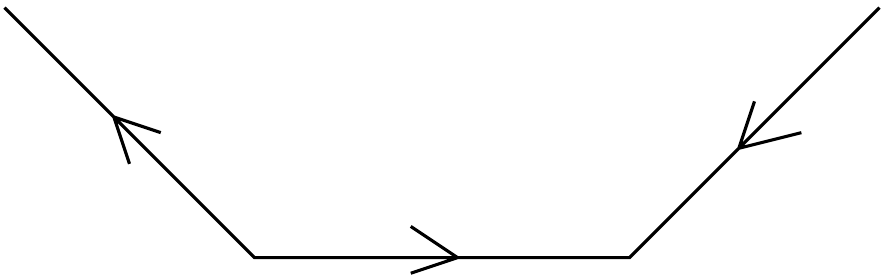}}} = 
\brak{\raisebox{-4pt}{\includegraphics[height=0.10in]{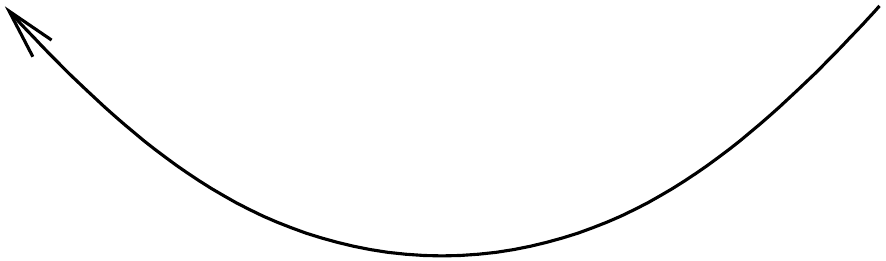}}}. \label{skein_rembv}
\end{eqnarray}
   
Let us now say a few words about our webs and foams. A diagram $\Gamma$ obtained by resolving each crossing in $D$ in either the oriented or disoriented fashion is a collection of disjoint circle graphs, called \textit{webs}. A web is a planar graph with bivalent vertices such that the two edges incident to a vertex are either both entering the vertex or both leaving the vertex. Webs without vertices are also allowed. For every \textit{singular resolution} \raisebox{-8pt} {\includegraphics[height=0.3in]{singres.pdf}} there is an ordering of the two edges that meet at a vertex. We say that the edge that `goes in' or `goes out' from the right, respectively, is the \textit{preferred edge} for the corresponding vertex. Two adjacent vertices of a web are called \textit{of the same type} if the edge they share is for both vertices either the preferred edge or the non-preferred one. Otherwise, the vertices are called \textit{of different type}.
The local relations depicted in~\eqref{skein_rembv} say that we can `remove' adjacent pairs of vertices of the same type. 
 
A \textit{foam} is a piecewise oriented cobordism between two webs $\Gamma_0$ and $\Gamma_1$, regarded up to boundary-preserving isotopy. We draw foams with their source at the top and their target at the bottom (notice that this is the opposite convention of that used in~\cite{CC0, CC1, CC2}), and we compose them by placing one on top the other. Foams have \textit{singular arcs} (and/or \textit{singular circles})---represented by red curves---where orientations disagree; that is, the two facets incident with a given singular arc have opposite orientation, and because of this, they induce the same orientation on that arc. For each singular arc of a foam, there is an ordering of the facets that are incident with it, in the sense that one of the facets is the \textit{preferred facet} for that singular arc. This ordering is induced by the ordering of the edges corresponding to bivalent vertices that the singular arc connects, in the following sense: the preferred facet of a singular arc contains in its boundary the preferred edges of the two bivalent vertices that the singular arc connects. In particular, a pair of vertices can be connected by a singular arc only if the above rule is satisfied. We indicate facets' ordering near a singular arc by using labels 1 and 2. 

It is necessary to repeat here the local relations that appear in the definition of the category $\textbf{Foam}_{/\tilde{\ell}}$: 
$$\xymatrix@R=2mm{
 (S) \ \ \ \ \ \ \ \ \ 
  \raisebox{-10pt}{\includegraphics[width=0.4in]{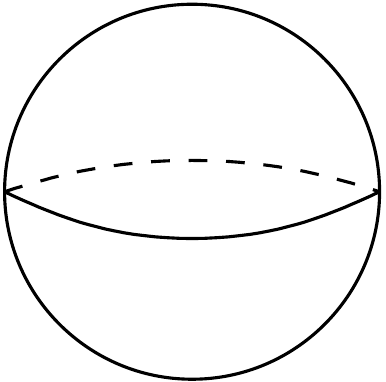}}=0 \hspace{3cm} \raisebox{-5pt}{\includegraphics[width=0.6in]{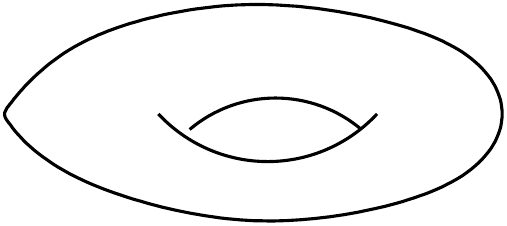}}=2 & (T) \\
\raisebox{-13pt}{\includegraphics[height=.4in]{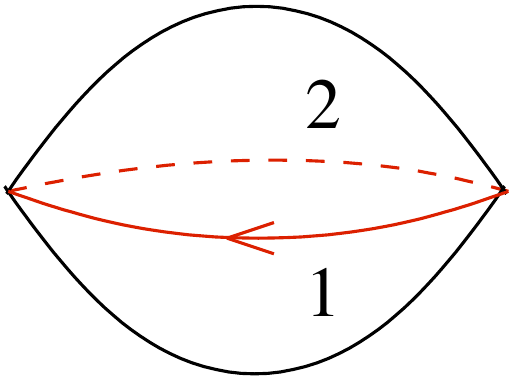}} = 0 =  \raisebox{-13pt}{\includegraphics[height=.4in]{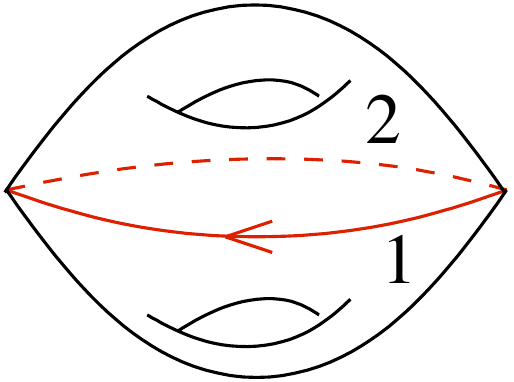}}\, \hspace{1.5cm} \raisebox{-13pt}{\includegraphics[height=.4in]{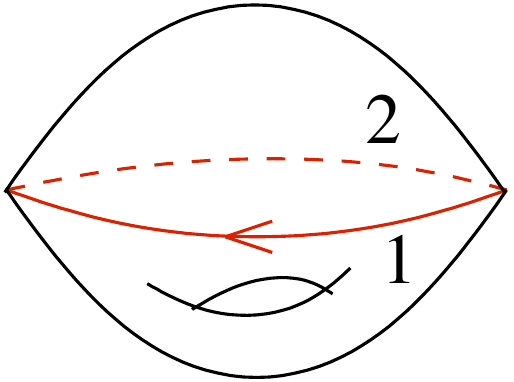}} = 2i = -\,\raisebox{-13pt}{\includegraphics[height=.4in]{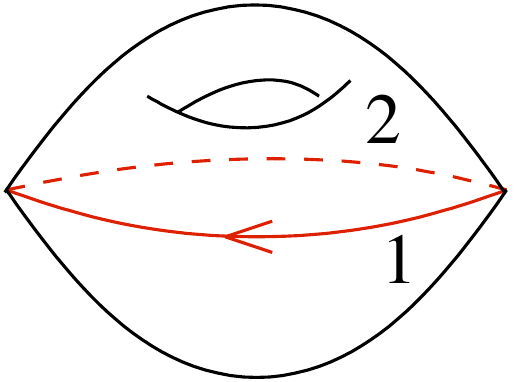}}  & (UFO)\\
\ (G2)\ \ \ \ \ \   
\raisebox{-8pt}{\includegraphics[height=.35in]{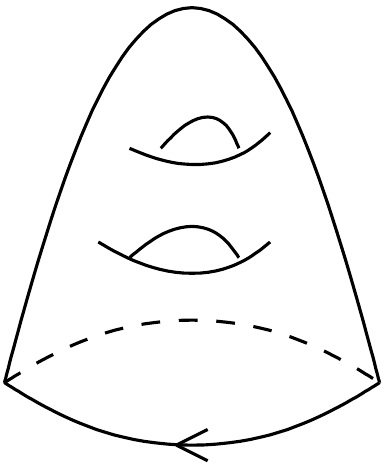}} = (h^2 + 4a)\, \raisebox{-8pt}{\includegraphics[height=.35in]{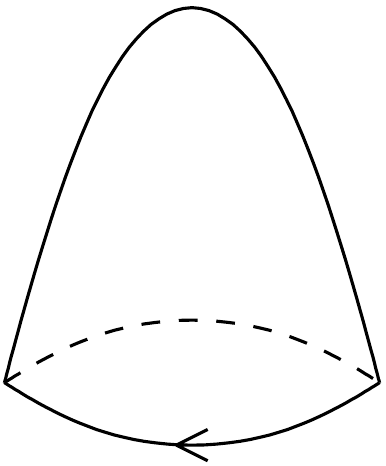}} \hspace{1cm}  \hspace{1cm} \raisebox{-22pt}{\includegraphics[height=.7in]{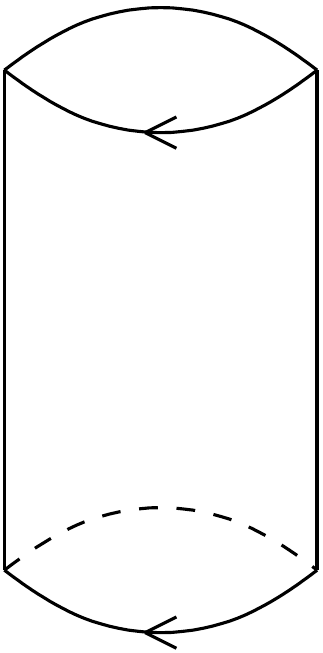}} = \displaystyle\frac{1}{2} \raisebox{-22pt}{\includegraphics[height=.7in]{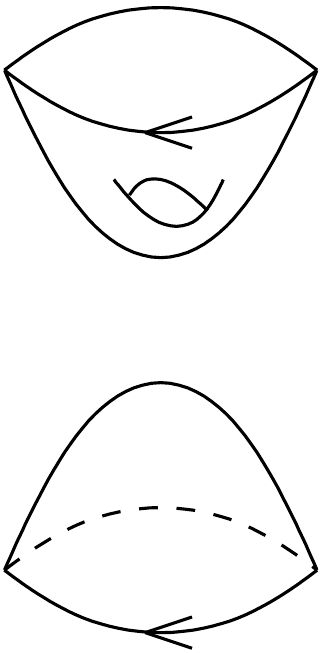}} + \frac{1}{2}\raisebox{-22pt}{\includegraphics[height=.7in]{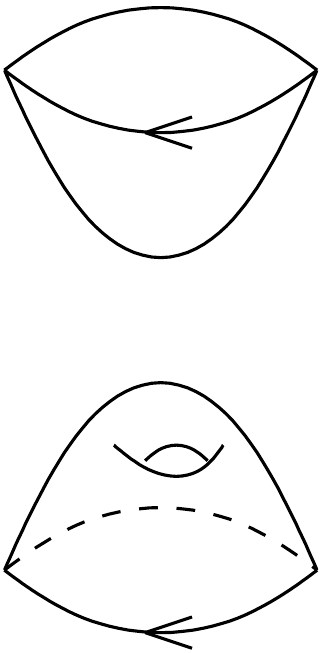}} &(SF)
}$$
To be precise, we mod out the set of morphisms of the category of foams by the local relations $\tilde{\ell}$ = (S, T, UFO, G2, SF). We remind the reader that the local relations $\ell =$ (2D, SF, S, UFO) used in the dotted $sl(2)$ foam cohomology are slightly different (see~\cite[Section 3.2]{CC1}).

For the purpose of this paper, it is important to recall that the imposed local relations of the foam theory (either dot-free or dotted theory) imply the following \textit{curtain identities}:
\begin{align}
\raisebox{-13pt}{\includegraphics[height =0.5in]{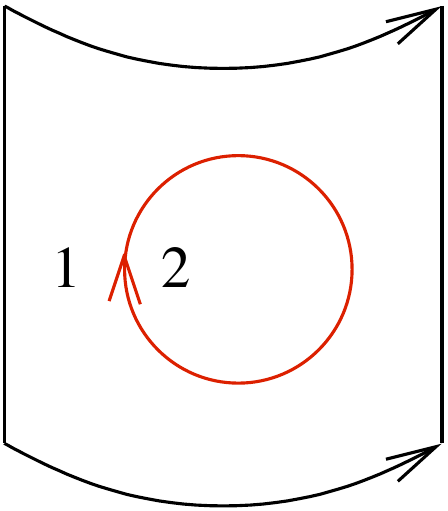}}&= i
\,\raisebox{-13pt}{\includegraphics[height=0.5in]{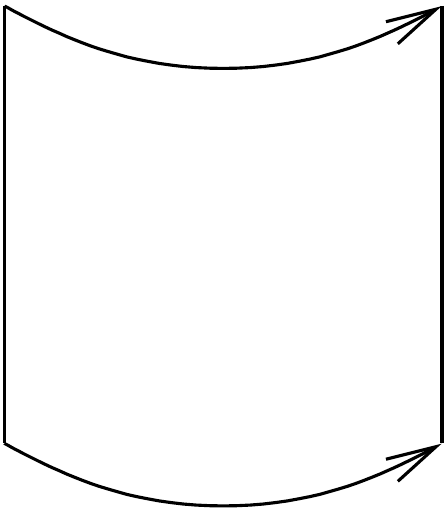}} \hspace{1.5cm}
\raisebox{-13pt}{\includegraphics[height =0.5in]{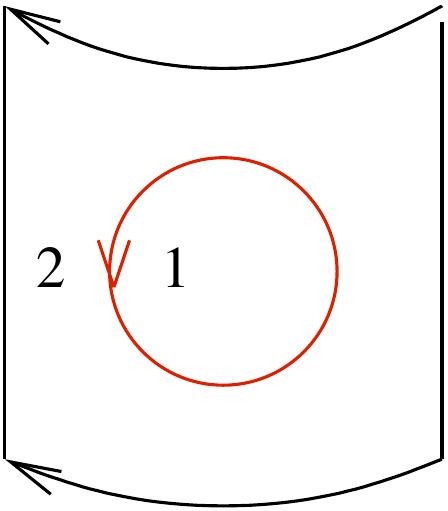}} =-i
\,\raisebox{-13pt}{\includegraphics[ height=0.5in]{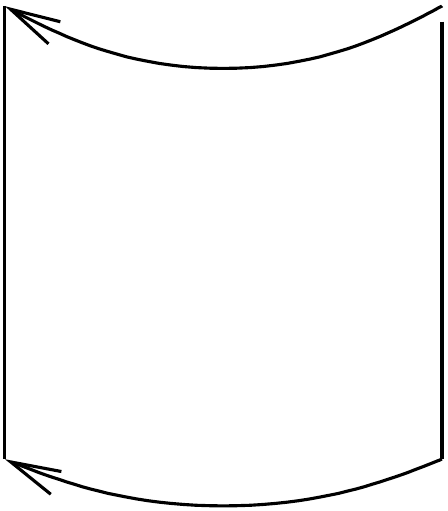}} & (CI-1) \\
\raisebox{-18pt}{\includegraphics[height=0.6in]{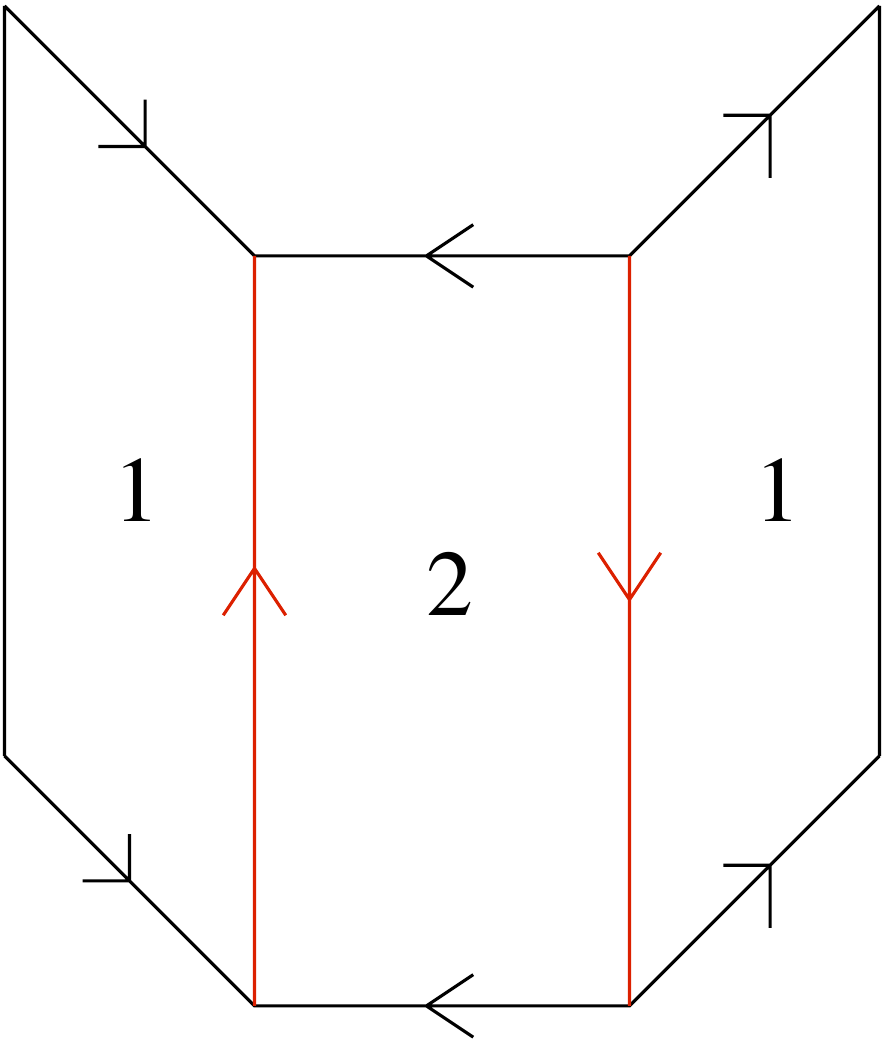}} &= -i \,
\raisebox{-18pt}{\includegraphics[height=0.6in]{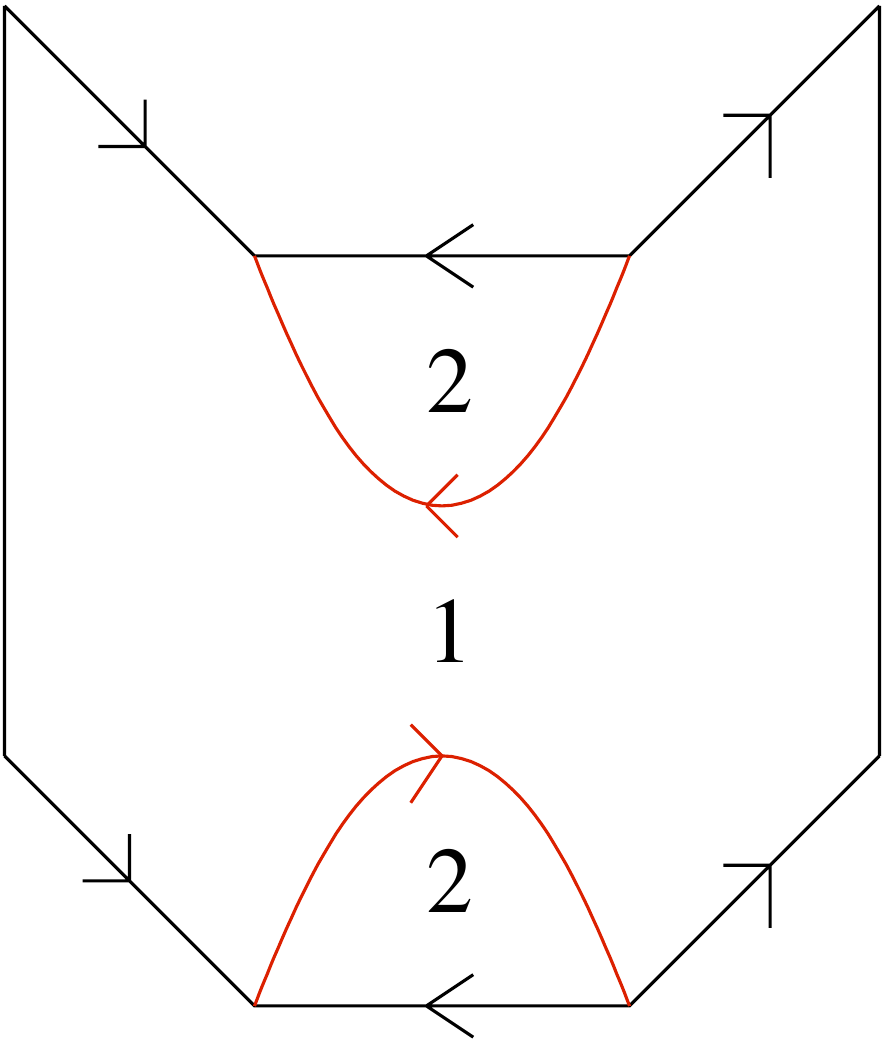}} \hspace{1cm}
\raisebox{-18pt}{\includegraphics[height=0.6in]{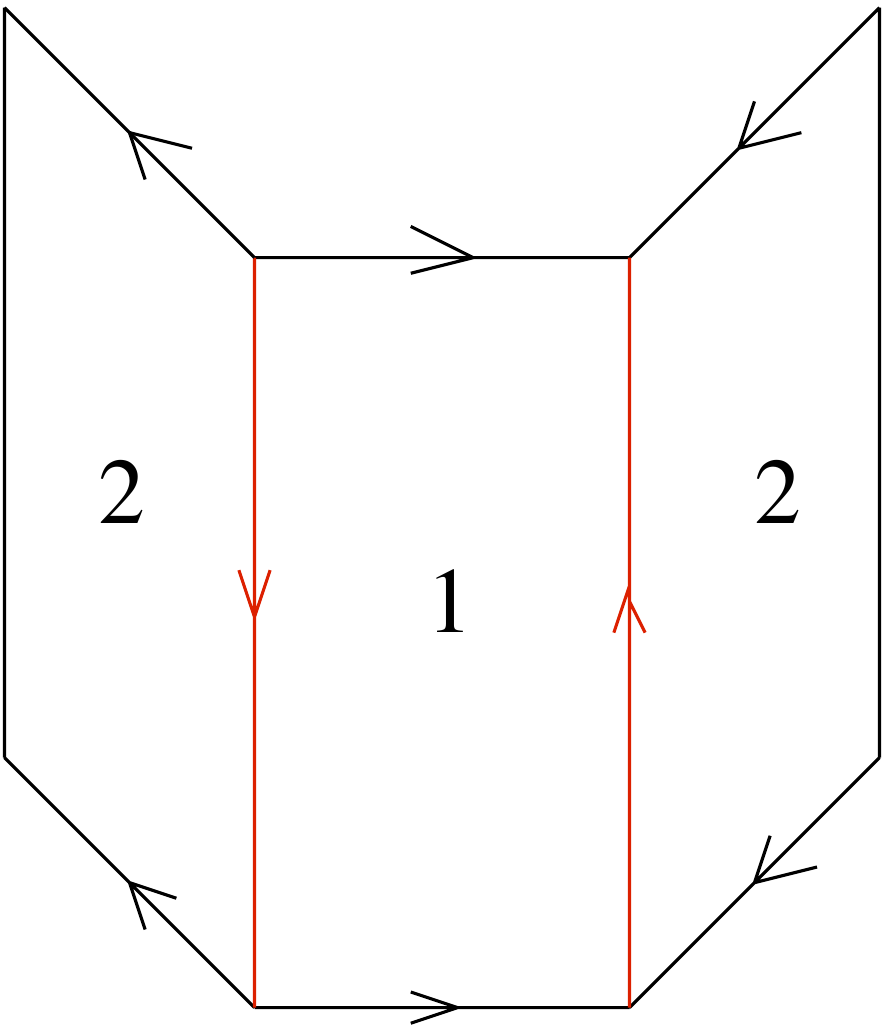}}= i\,
\raisebox{-18pt}{\includegraphics[height=0.6in]{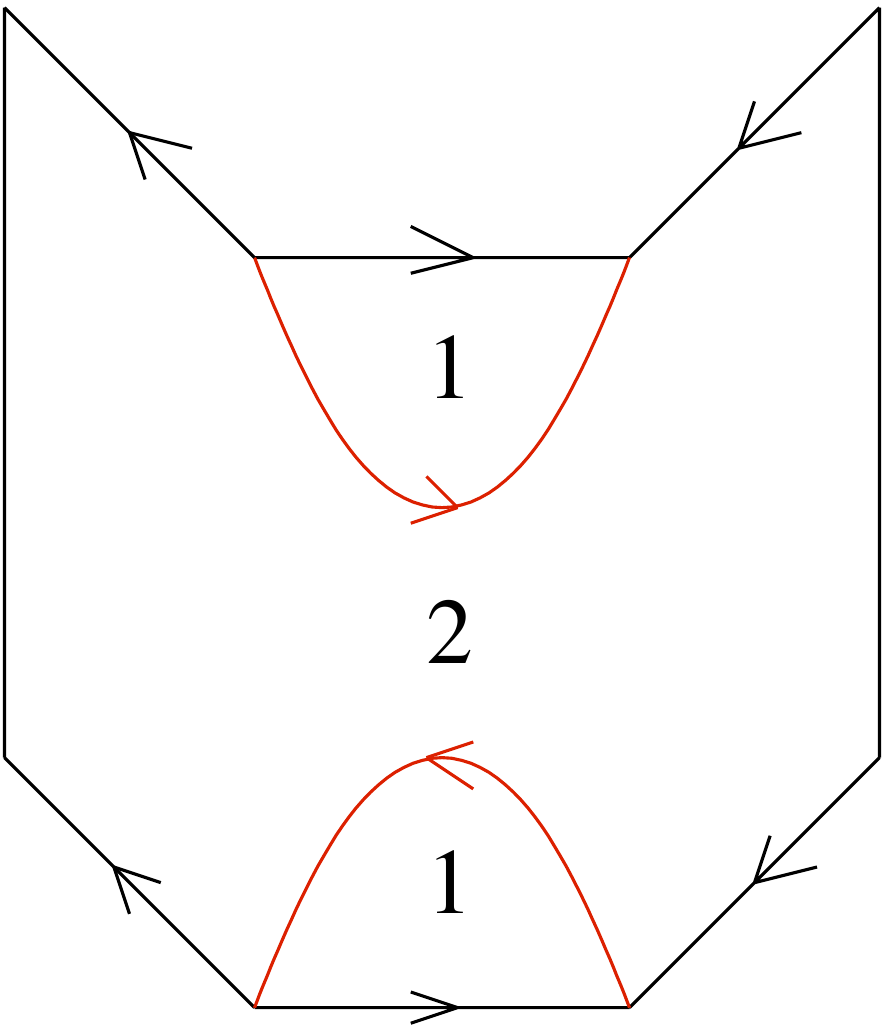}} & (CI-2)
\end{align}
as well as the the \textit{cutting-neck} relation (CN) depicted below:
$$\xymatrix@R=2mm{
 \raisebox{-22pt}{\includegraphics[width=0.3in, height=.8in]{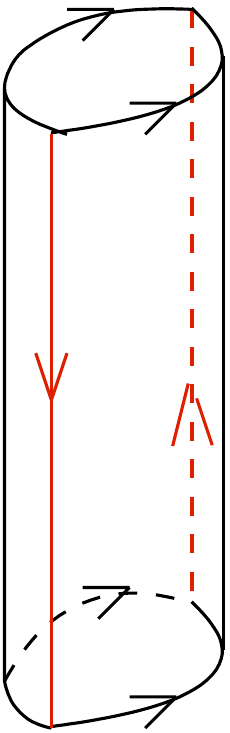}}= \displaystyle \frac{i}{2}
\raisebox{-25pt}{\includegraphics[width=0.35in, height=0.85in]{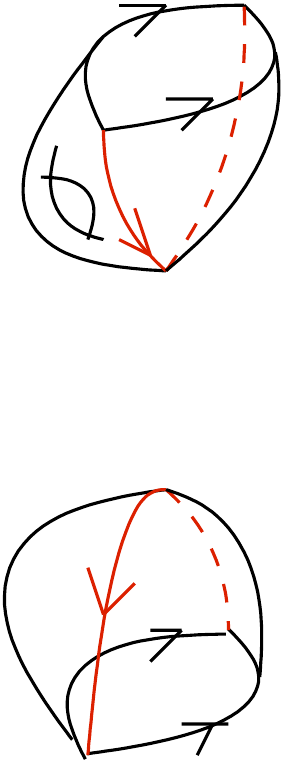}} + \frac{i}{2}
\raisebox{-25pt}{\includegraphics[width=0.35in, height=0.85in]{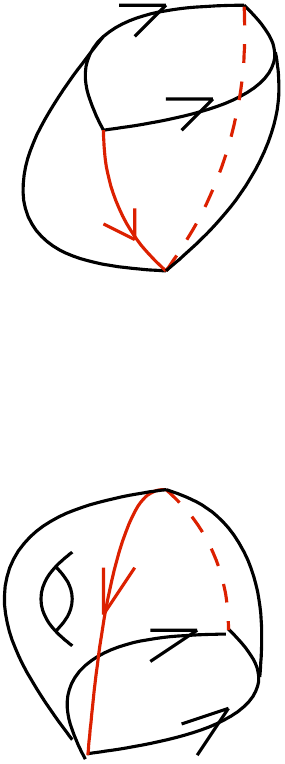}} & (CN)}$$
The following isomorphisms hold in $\textbf{Foam}_{/\tilde{\ell}}$ and are consequences of the ``curtain identities": 
\begin{align}\label{isomorphisms}
\raisebox{-25pt}{\includegraphics[height=.8in]{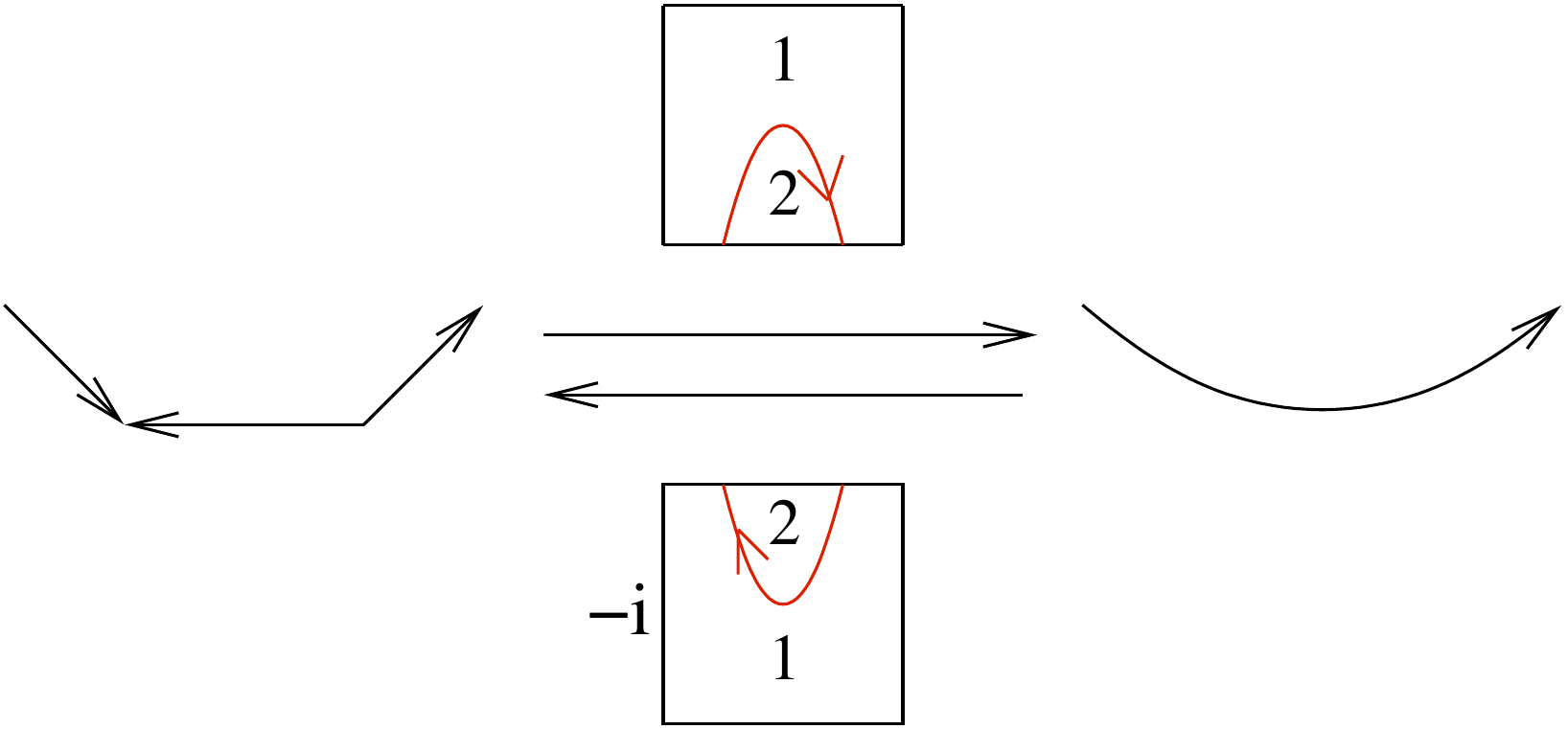}}
 \hspace{1.5cm}
\raisebox{-25pt}{\includegraphics[height=.8in]{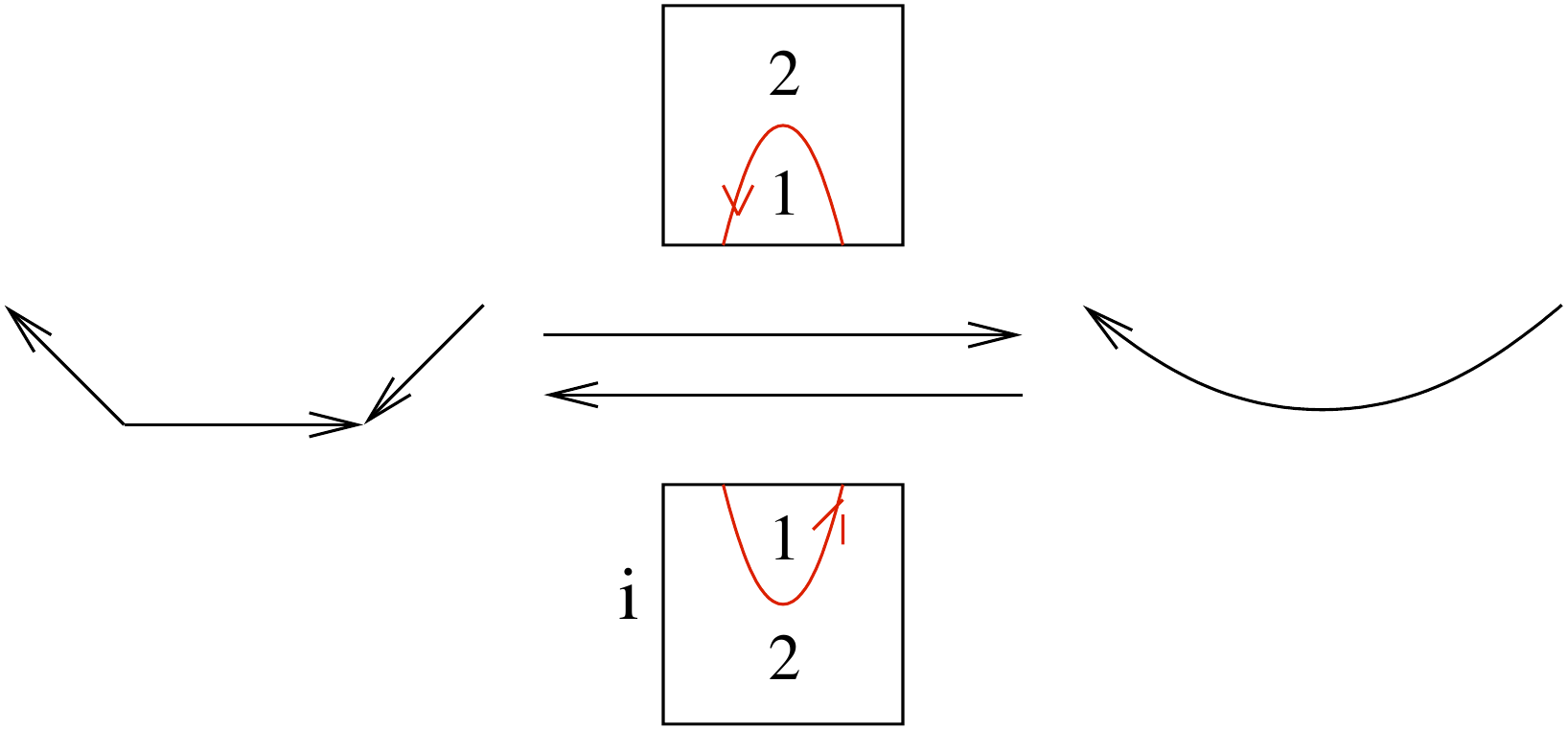}}\end{align}
We remark that the ``curtain identities" are the local relations imposed on the set of cobordisms in~\cite{CMW}. 

The category $\textbf{Foam}_{/\tilde{\ell}}$ is graded by defining $\deg(S) = -\chi(S)$ for every foam $S$ in $\textbf{Foam}_{/\tilde{\ell}},$ where $\chi$ is the Euler characteristic.  
 
 To obtain a computable invariant, one applies a  \textit{tautological} degree-preserving functor 
 $\mathcal{F} \co \textbf{Foams}_{/\tilde{\ell}} \to \textbf{R-Mod}$,
 which associates to a web $\Gamma$ the set $ \Hom_{\textbf{Foam}_{/\tilde{\ell}}}(\emptyset, \Gamma)$ of all morphisms from the empty 1-manifold $\emptyset$ to $\Gamma$, and which associates to a homomorphism between two webs the obvious homomorphism obtained by composition. The functor $\mathcal{F}$ mimics the web skein relations~\eqref{skein_circle} and~\eqref{skein_rembv}, and extends in a straightforward manner to the category \textit{Kof}. $\mathcal{F}([D])$ is an ordinary complex of graded $R$-modules whose cohomology is an up-to-homotopy bigraded invariant of $L$, and whose graded Euler characteristic is the quantum $sl(2)$ polynomial $P_2(L)$.


\section{Twin Frobenius algebras and singular 2-cobordisms}\label{sec:twin Frobenius extensions}

Let $\mathbf{C}$ be an arbitrary symmetric monoidal category with unit object $\textbf{1} \in \mathbf{C}.$  As examples of such a category, we are interested in the category $\textbf{Vect}_k$ of vector spaces over a field $k$ and $k$-linear maps, and in the category $\textbf{R-Mod}$ of $R$-modules and module homomorphisms, where $R$ a commutative ring; in particular, the category $\textbf{Ab}$ of abelian groups is of interest for us.  

In the next two subsections we generalize the concepts introduced in~\cite{CC3}.
\subsection{Enhanced twin Frobenius algebras}

Recall that a \textit{Frobenius algebra} in $\mathbf{C}$, $(C, m, \iota, \Delta, \epsilon)$, is an associative algebra $(C,m, \iota)$ with unit $\iota \co \textbf{1} \to C$ and multiplication $m \co C \otimes C \to C$ which is also a coassociative algebra $(C, \Delta, \epsilon)$ with counit $\epsilon \co C \to \textbf{1}$ and comultiplication $\Delta \co C \to C \otimes C$ satisfying 
$ (m \otimes \id_C) \circ (\id_C \otimes \Delta) = \Delta \circ m = (\id_C \otimes m) \circ (\Delta \otimes \id_C). $

A Frobenius algebra $(C, m, \iota, \Delta, \epsilon)$ is called \textit{commutative} if $m \circ \tau = m,$ and is called \textit{symmetric} if $\epsilon \circ m = \epsilon \circ m \circ \tau,$ where $\tau \co C \otimes C \to C \otimes C,\, a \otimes b \mapsto \ b \otimes a.$
A \textit{homomorhism of Frobenius algebras} $f \co C \to C'$ is a linear map which is both a homomorphism of unital algebras and counital coalgebras.

\begin{definition}
 An  \textit{enhanced twin Frobenius algebra} $\mathbf{eT}: = (C, W, z_1, z_1^*, z_2, z_2^*)$ in $\mathbf{C}$ consists of
\begin{itemize}
\item  a commutative Frobenius algebra $C = (C, m_C, \iota_C, \Delta_C, \epsilon_C)$,
\item a symmetric Frobenius algebra $W = (W, m_W, \iota_W, \Delta_W, \epsilon_W)$,
\item four morphisms $z_1, z_2 \co C \to W$ and $z_1^*, z_2^* \co W \to C$, 
\end{itemize}
such that $z_1$ and  $z_2$ are homomorphisms of algebra objects in $\mathbf{C},$ and such that the following hold for $k = 1,2$:
\begin{equation}
\epsilon_C \circ m_C \circ (\id_C \otimes z_{k}^*) = \epsilon_W \circ m_W \circ (z_{k} \otimes \id_W), \hspace{2.4cm} (\text{duality}) 
\end{equation}
\begin{equation}
m_W \circ (\id_W \otimes z_{k}) = m_W \circ \tau_{W, W} \circ (\id_W \otimes z_{k}), \hspace{0.8cm}  (\text{centrality condition})
\end{equation}
\begin{equation}
z_k \circ m_C \circ \Delta_C \circ z_k^* = m_W \circ \tau_{W,W} \circ \Delta_W. \hspace{1.8cm}  (\text{genus-one condition}) \end{equation}
\end{definition}
The first equality says that for each $k = 1,2$,  $z_k^*$ is the morphism dual to $z_k$ (implying that $z_k^*$ is a homomorphism of coalgebras in $\mathcal{C}$). If $\mathbf{C} = \textbf{Vect}_k,$ the second equality says that $z_k(C)$ is contained in the center of the algebra $W$, for each $k = 1,2$.

We remark that a \textit{twin Frobenius algebra}, as introduced by the author in~\cite{CC3}, is a set $(C, W, z, z^*)$ which mimics the laws of an enhanced twin Frobenius algebra, with the difference that a twin Frobenius algebra involves only one morphism $z \co C \to W$ with dual $z^* \co W \to C$.
 
\begin{definition} \label{def:homs in eT-Frob}
A \textit{homomorphism of enhanced twin Frobenius algebras}  is a map \[ (C, W, z_1, z_1^*, z_2, z_2^*) \stackrel{(\phi, \psi)}{\longrightarrow} (\overline{C}, \overline{W}_, \overline{z}_1, \overline{z}_1^*, \overline{z}_2, \overline{z}_2^* ) \]
 consisting of a pair $(\phi, \psi)$ of Frobenius algebra homomorphisms $\phi \co C \to \overline{C} $ and $\psi \co W \to \overline{W},$ such that $\overline{z}_{k} \circ \phi = \psi \circ z_{k}$ and $\overline{z}_{k}^*\circ \psi = \phi \circ z_{k}^*,$ for each $k = 1,2.$ 
 \end{definition}
 
 The category of enhanced twin Frobenius algebras in $\textbf{C}$ and their homomorphisms has a symmetric monoidal structure with respect to the tensor product of two  extended twin Frobenius algebras 
\[\mathbf{eT} = (C, W, z_1, z_1^*, z_2, z_2^*)\,\, \text{and} \,\,\mathbf{\overline{eT}} = (\overline{C}, \overline{W}, \overline{z}_1, \overline{z}_1^*, \overline{z}_2, \overline{z}_2^*),\]
which is defined as 
\[\mathbf{eT} \otimes \mathbf{\overline{eT}} = (C \otimes_R \overline{C}, W \otimes_R \overline{W}, z_1 \otimes \overline{z}_1, z_1^* \otimes \overline{z}_1^*, z_2 \otimes \overline{z}_2, z_2^* \otimes \overline{z}_2^*).\] The unit of the monoidal structure is $(\mathbf{1} ,\mathbf{1} ,\id_{\mathbf{1}}, \id_{\mathbf{1}},\id_{\mathbf{1}}, \id_{\mathbf{1}}).$ 

\subsection{Singular cobordisms and the category $\textbf{Sing-2Cob}$}

The category of singular $2$-cobordisms is an extension of the category $\textbf{2Cob}$ of $2$-dimensional cobordisms from oriented manifolds to piecewise oriented (but globally oriented) manifolds. Singular 2-cobordisms are a particular type of foams and may contain singular arcs and/or singular circles where orientations disagree. What we call here $\textbf{Sing-2Cob}$ is in fact a skeleton of the category of singular 2-cobordisms, and is an extension of the category with the same name introduced in~\cite{CC3}, by allowing counterclockwise oriented circles as well. 

We fix a specific \textit{bi-web} (a closed web with exactly two bivalent vertices) and we denote it by 1. We also fix a positively oriented circle and a negatively oriented circle, which we denote by $0_{+}$ and $0_{-}$, respectively.
\[ 1 = \raisebox{-3pt}{\includegraphics[height=0.15in]{singcircle.pdf}} \qquad 0_{+} = \raisebox{-3pt}{\includegraphics[height=0.15in]{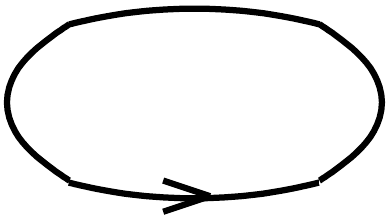}} \qquad 0_{-} = \raisebox{-3pt}{\includegraphics[height=0.15in]{circle.pdf}}  \]

An object of the category $\textbf{Sing-2Cob}$ is a finite sequence  $\textbf{n} = (n_1, n_2,\dots, n_k)$ where $n_j \in \{0_+, 0_-,1 \}$. Thus $\textbf{n}$ is a disjoint union of copies of the fixed bi-web and oriented circles. The length of the sequence, denoted by $\vert \textbf{n} \vert = k $, can be any nonnegative integer, and equals the number of disjoint connected components of the object $\textbf{n}.$ If $\vert \textbf{n} \vert = 0$, then $\textbf{n}$ is the empty 1-manifold.

A morphism $\Sigma \co \textbf{n} \to \textbf{m}$ in $\textbf{Sing-2Cob}$ is a cobordism with source $\textbf{n}$ and target $\textbf{m}$, considered up to equivalences. The boundary of $\Sigma$ is $\partial \Sigma = \overline{\textbf{n}} \cup \textbf{m}$, where $\overline{\textbf{n}}$ is $\textbf{n}$ with opposite orientation. Two singular cobordisms $\Sigma_1$ and $\Sigma_2$ are considered equivalent, and we write $\Sigma_1 \cong \Sigma_2$, if there exists an orientation-preserving diffeomorphism $\Sigma_1 \to \Sigma_2$ which restricts to the identity on the boundary. 

We read morphisms as cobordisms from top to bottom, by convention, and we compose them by placing one on top the other. The concatenation $\textbf{n} \amalg \textbf{m} := (n_1, n_2, \dots, n_{\vert \textbf{n} \vert}, m_1, m_2, \dots, m_{\vert \textbf{m} \vert})$ of sequences together with the free union of singular 2-cobordisms endow the category $\textbf{Sing-2Cob}$ with the structure of a symmetric monoidal category. 

The results in~\cite{CC3} can be easily extended to the new category $\textbf{Sing-2Cob}$, and one acquires that every connected singular 2-cobordism in $\textbf{Sing-2Cob}$ can be obtained by composing the following cobordisms:

\begin{equation} \label{eg:generators_circle} 
\raisebox{-13pt}{\includegraphics[height=0.4in]{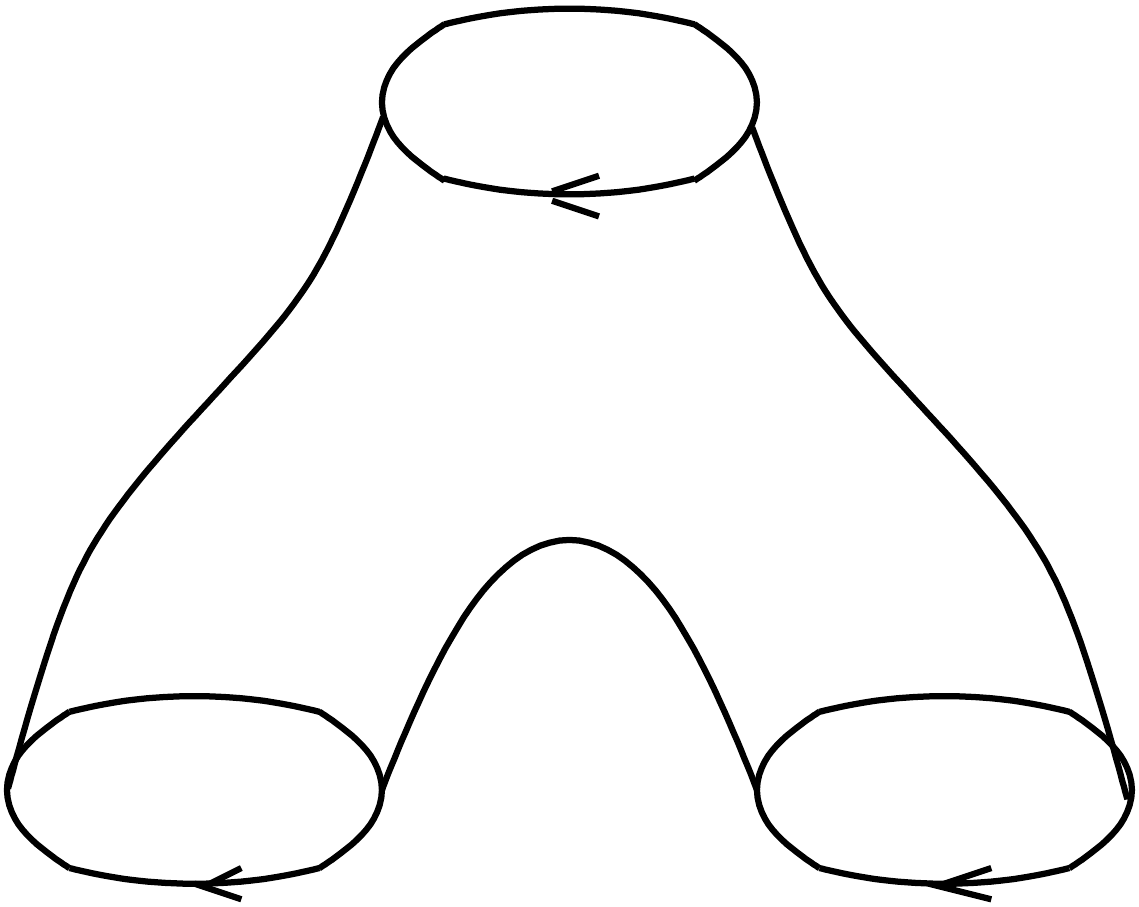}}  \quad  \raisebox{-13pt}{\includegraphics[height=0.4in]{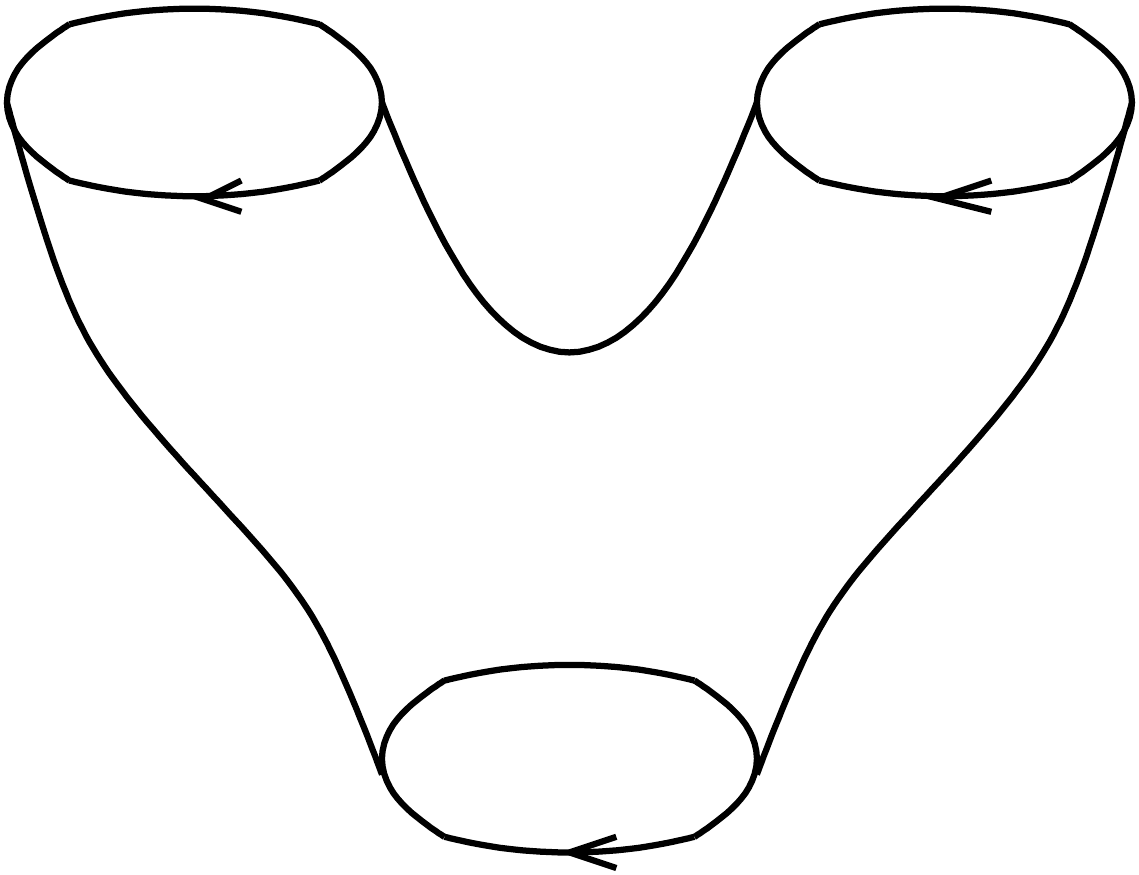}}  \quad  \raisebox{-5pt}{\includegraphics[height=.25in]{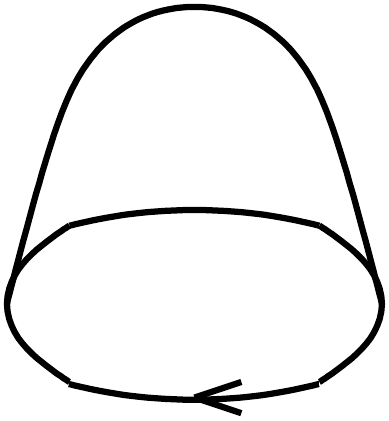}}   \quad \raisebox{-5pt}{\includegraphics[height=0.25in]{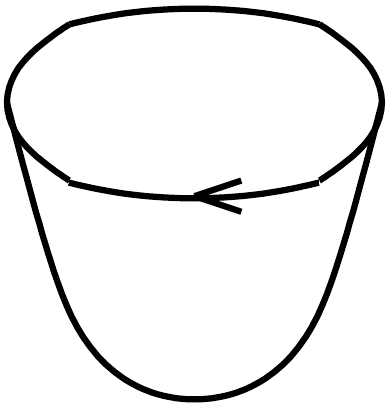}} 
\end{equation}
\begin{equation}  \label{eg:generators_web}
 \raisebox{-13pt}{\includegraphics[height=0.4in]{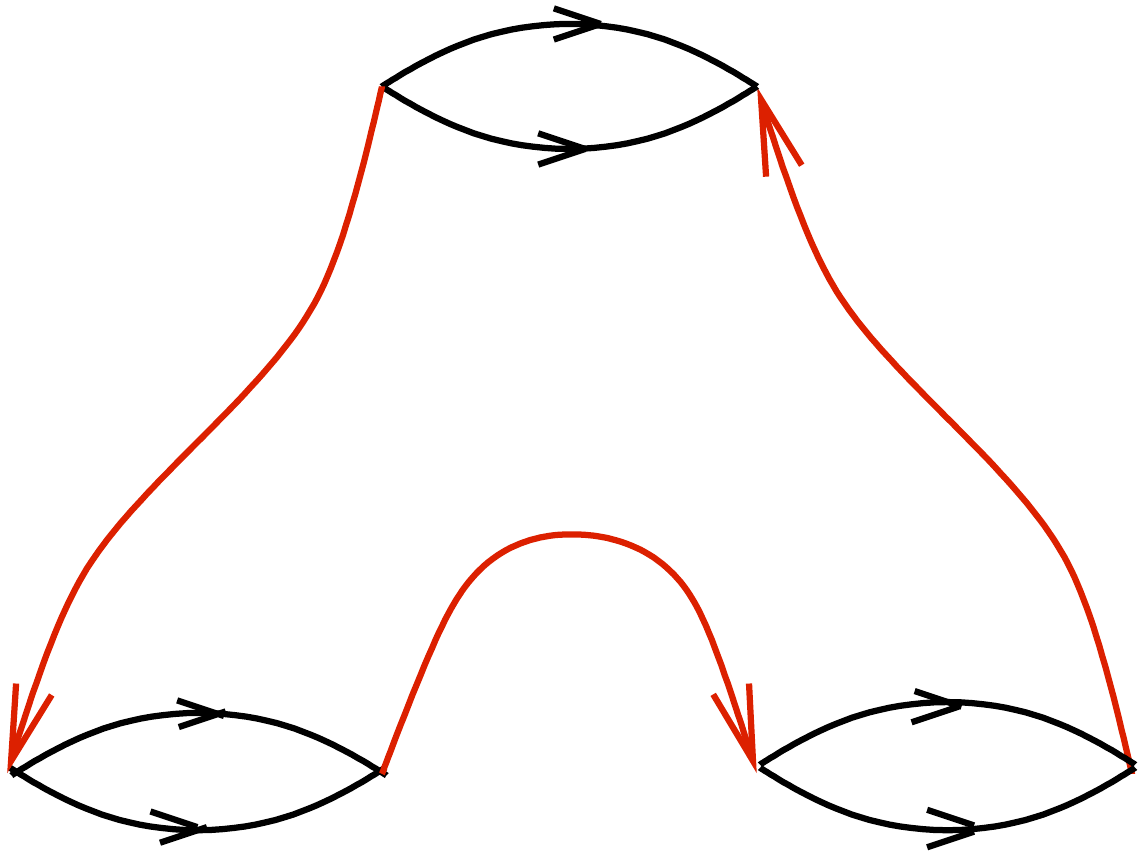}}  \quad  \raisebox{-13pt}{\includegraphics[height=0.4in]{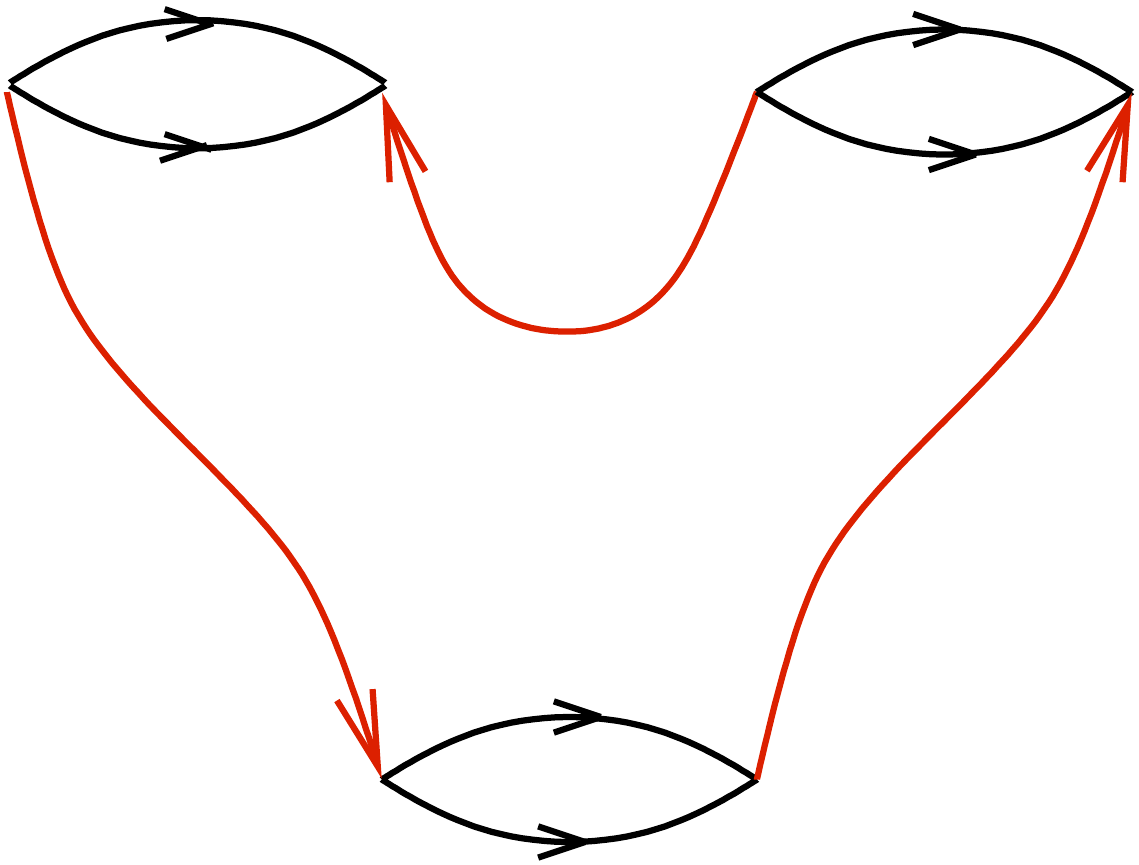}}  \quad  \raisebox{-5pt}{\includegraphics[height=.25in]{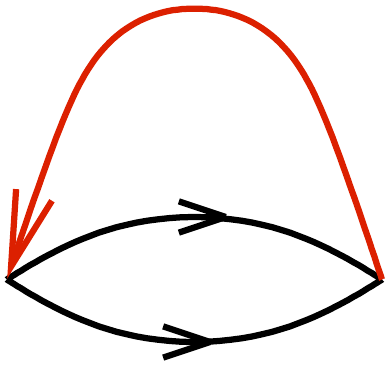}}  \quad \raisebox{-5pt}{\includegraphics[height=0.25in]{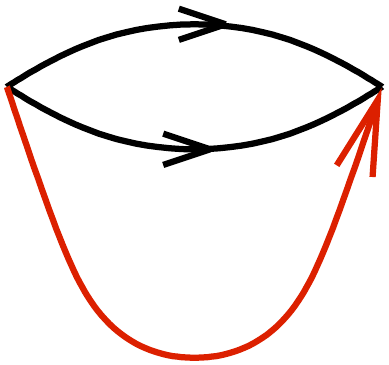}} 
 \end{equation} 
 \begin{equation} \label{eg:generators_web_circle_one}
 \raisebox{-18pt}{\includegraphics[height=0.4in]{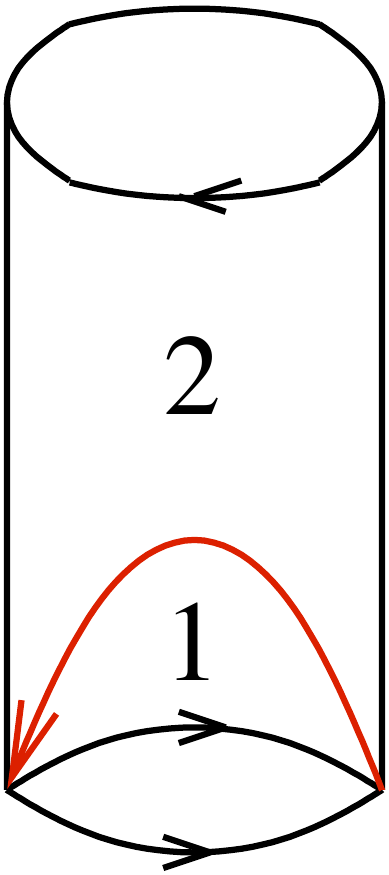}}   \hspace{0.5cm} \raisebox{-18pt}{\includegraphics[height=0.4in]{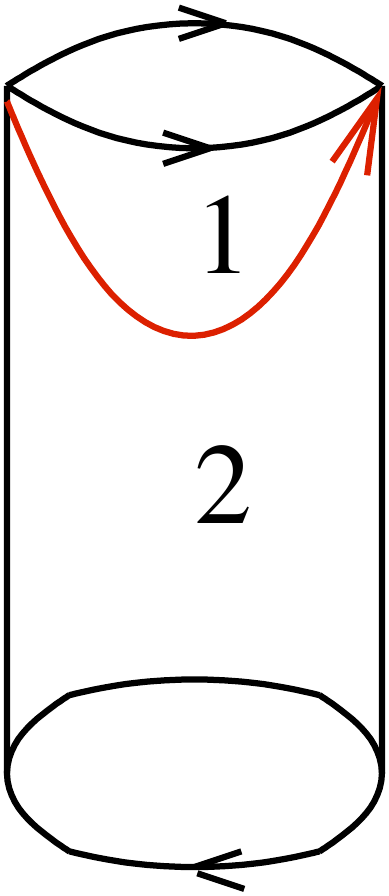}}  \hspace{0.5cm} 
 \raisebox{-18pt}{\includegraphics[height=0.4in]{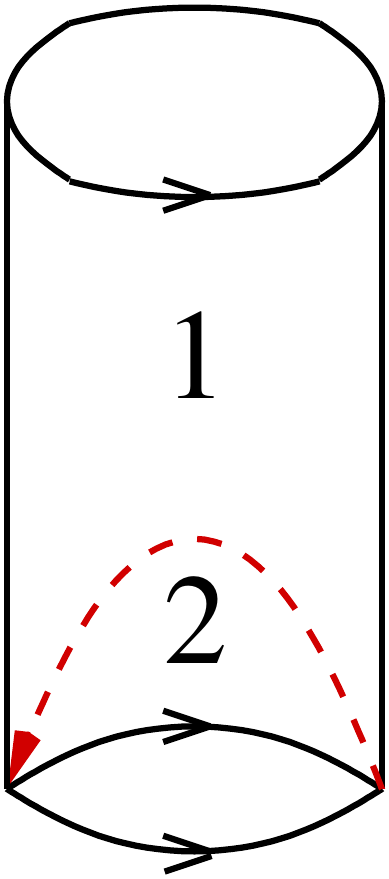}} \hspace{0.5cm} \raisebox{-18pt}{\includegraphics[height=0.4in]{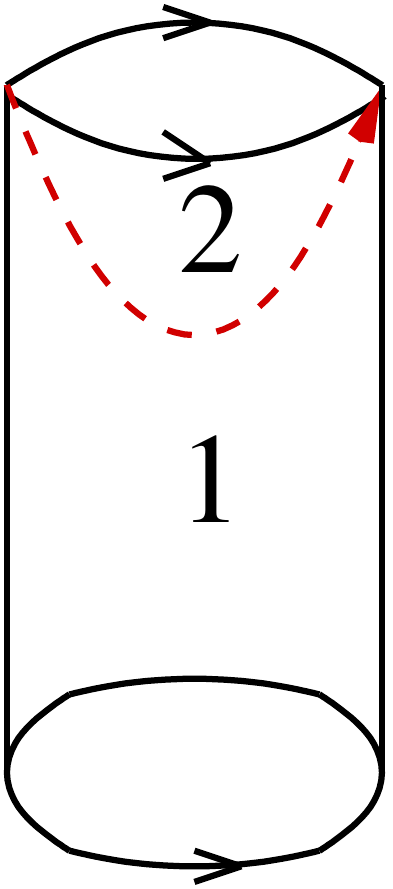}}
 \end{equation} 
together with the identity cobordisms (that is, cylinders over the bi-web, and over the positively oriented and negatively oriented circles) and the analogue of those depicted in~\eqref{eg:generators_circle} but with opposite orientation for the boundary circles.
 
 We use a dashed red curve to represent a singular arc that lies on the back of a cobordisms; otherwise we use continuous red curves.
 
 We draw the composite cobordisms \,$\raisebox{-13pt}{\includegraphics[height=0.4in]{cozipper_second.pdf}} \circ  \raisebox{-13pt}{\includegraphics[height=0.4in]{zipper.pdf}}$\, and \, $\raisebox{-13pt}{\includegraphics[height=0.4in]{cozipper.pdf}} \circ  \raisebox{-13pt}{\includegraphics[height=0.4in]{zipper_second.pdf}}$\, as:
\begin{equation} \label{eq:morphisms f and g}
\raisebox{-13pt}{\includegraphics[height=0.4in]{cozipper_second.pdf}} \circ  \raisebox{-13pt}{\includegraphics[height=0.4in]{zipper.pdf}} = \raisebox{-13pt}{\includegraphics[height=0.4in]{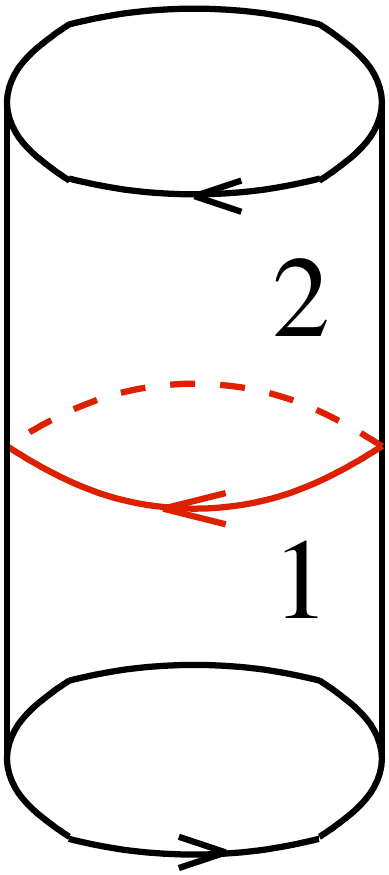}}  \hspace{1cm}  \raisebox{-13pt}{\includegraphics[height=0.4in]{cozipper.pdf}} \circ  \raisebox{-13pt}{\includegraphics[height=0.4in]{zipper_second.pdf}}  = \raisebox{-13pt}{\includegraphics[height=0.4in]{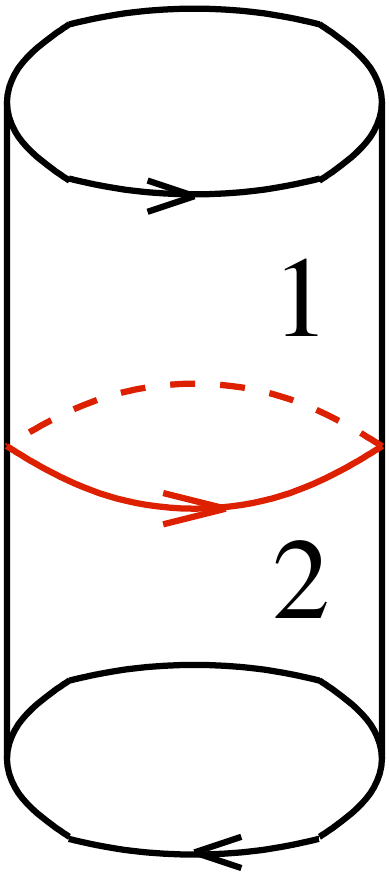}} \end{equation}

There is a finite set of relations among the morphisms that mimic the equations defining an enhanced twin Frobenius algebra. Specifically, the following diffeomorphisms hold in $\textbf{Sing-2Cob}$ (these can be verified as in the proof of~\cite[Proposition 3]{CC3}).

\begin{proposition}\label{prop:relations}
The following relations hold in the symmetric monoidal category $\textbf{Sing-2Cob}:$
\begin{enumerate}
\item The oriented circles $\raisebox{-3pt}{\includegraphics[height=0.15in]{circle.pdf}}$ and $\raisebox{-3pt}{\includegraphics[height=0.15in]{loop.pdf}}$ form commutative Frobenius algebra objects (we give below the relations for the negatively oriented circle, but the reader should have in mind that there are similar relations for the positively oriented circle):

\begin{equation*}
\raisebox{-13pt}{\includegraphics[height=0.6in]{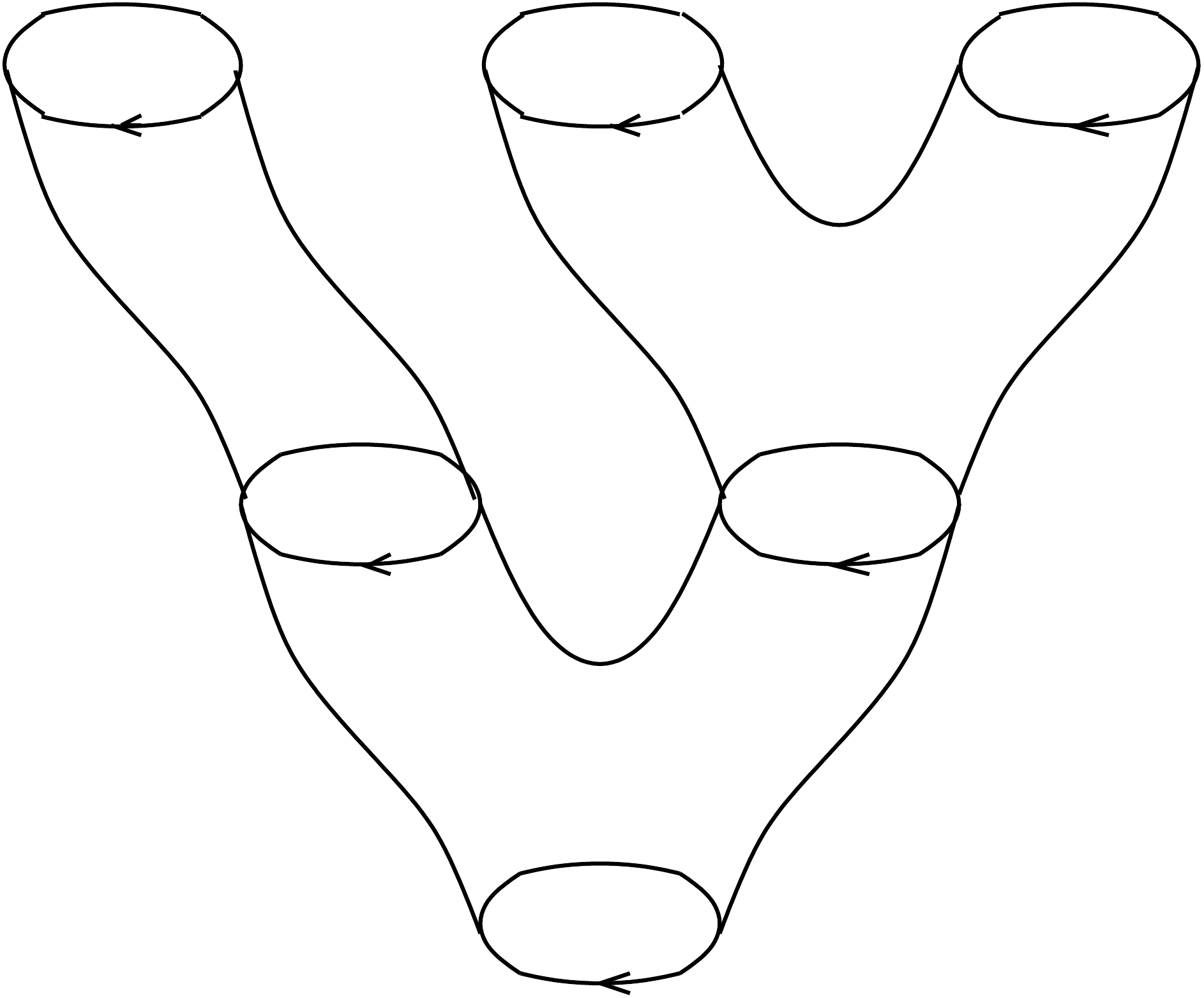}}\quad \cong \quad  \raisebox{-13pt}{\includegraphics[height=0.6in]{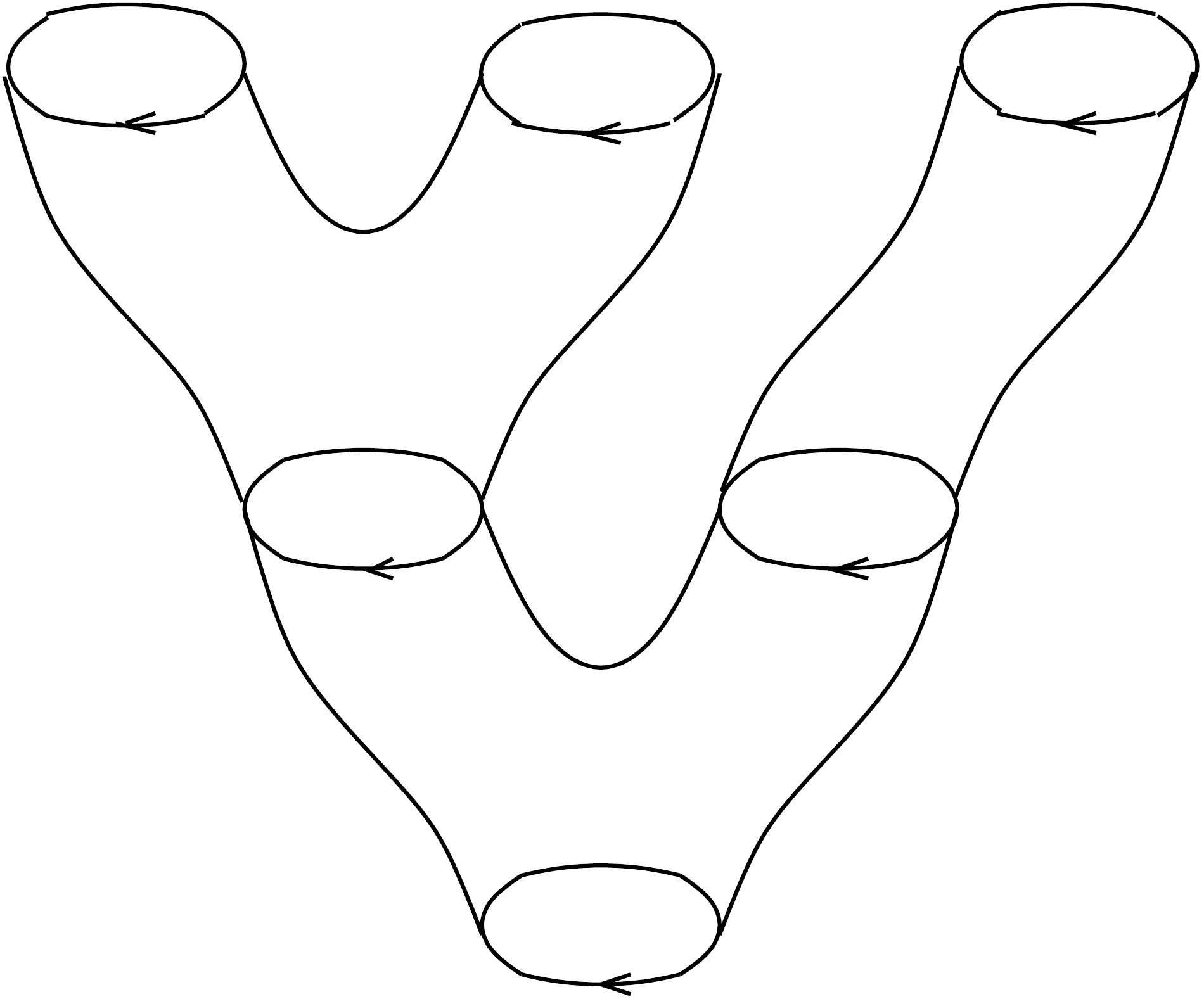}} \hspace{1cm} \raisebox{-13pt}{\includegraphics[height=0.6in]{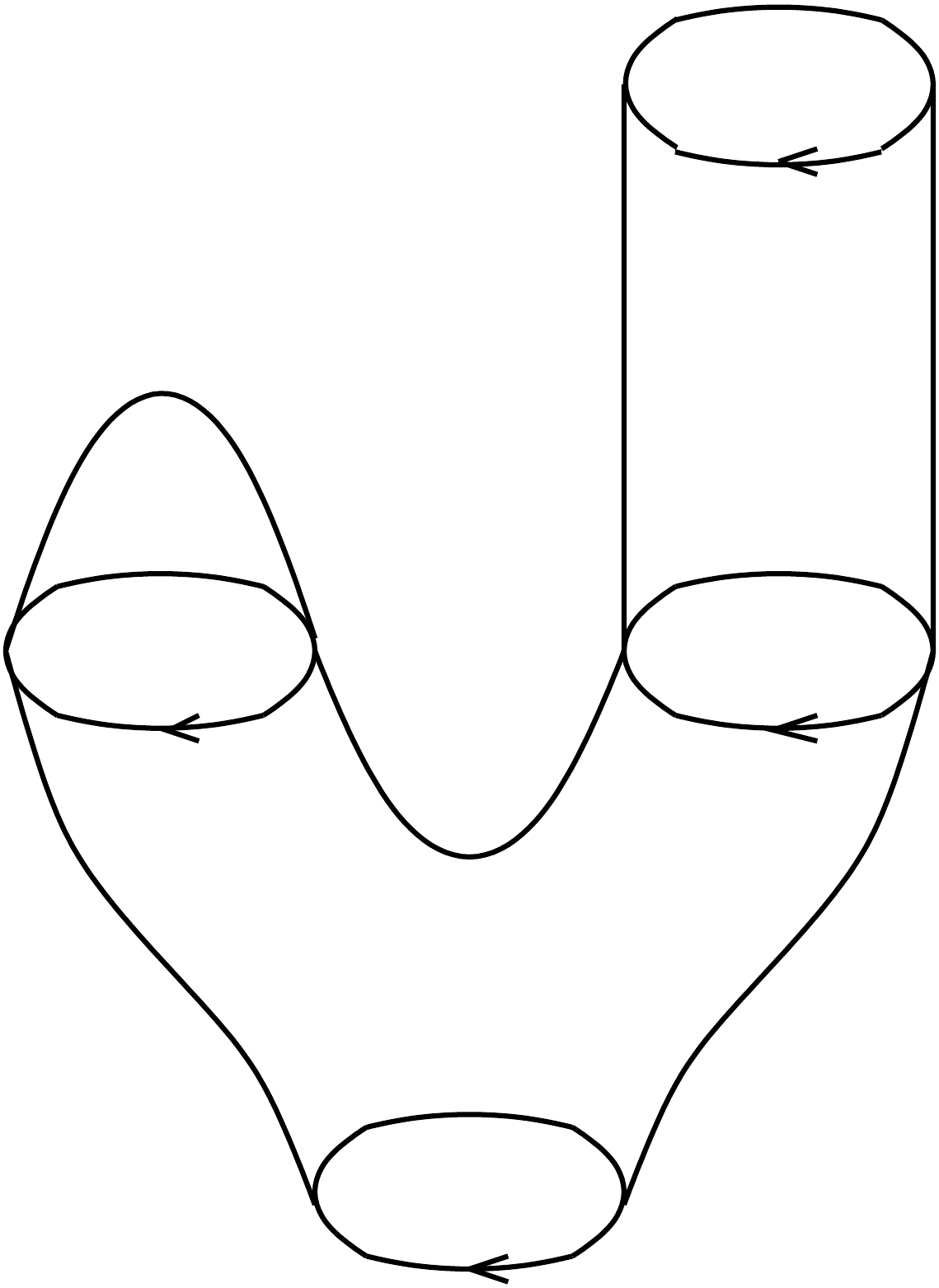}}\quad \cong \quad  \raisebox{-13pt}{\includegraphics[height=0.6in]{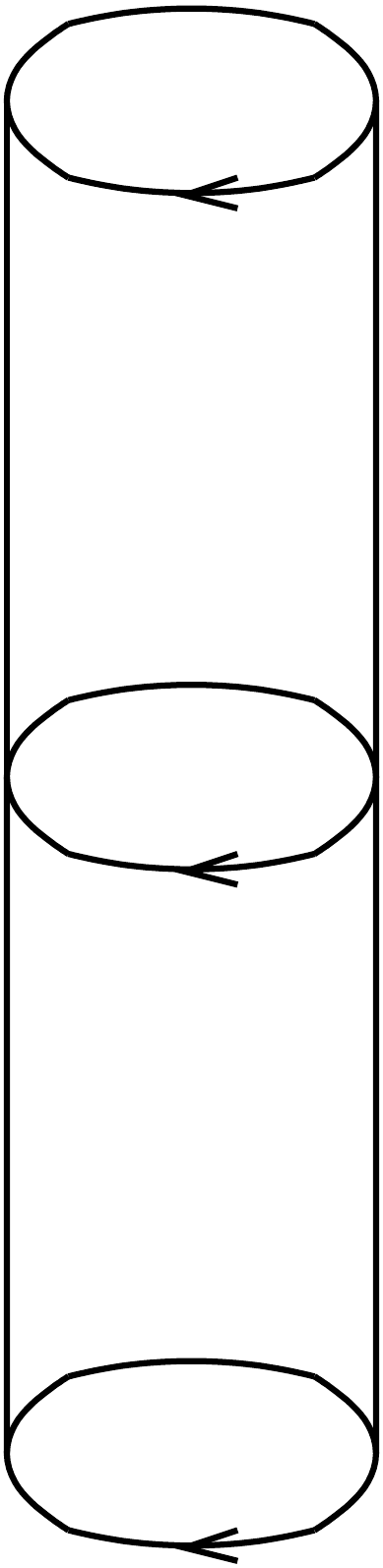}}\quad \cong \quad \raisebox{-13pt}{\includegraphics[height=0.6in]{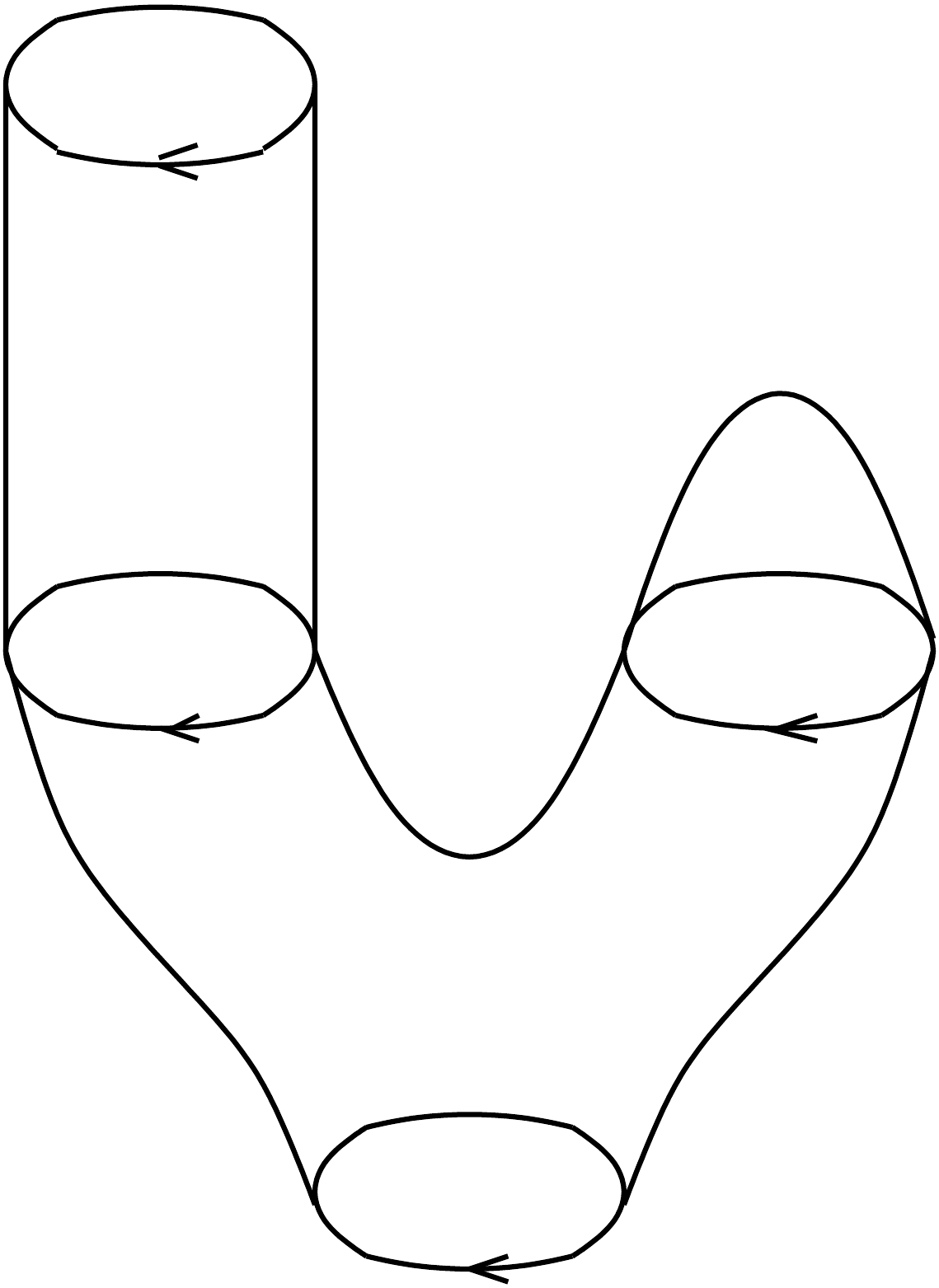}}\label{eq:circle_frob1}
\end{equation*}
\begin{equation*}
\raisebox{-13pt}{\includegraphics[height=0.6in]{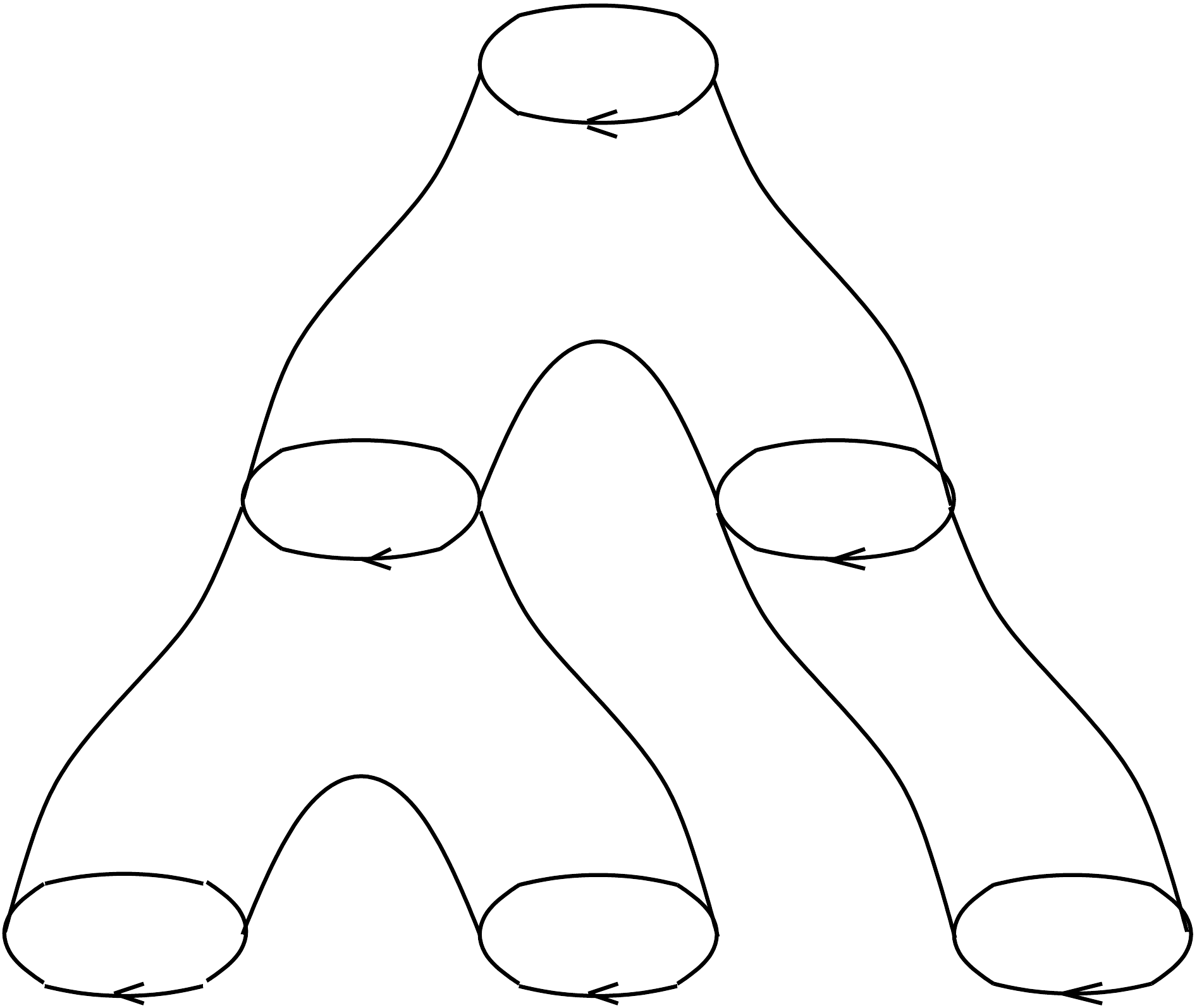}} \quad \cong \quad \raisebox{-13pt}{\includegraphics[height=0.6in]{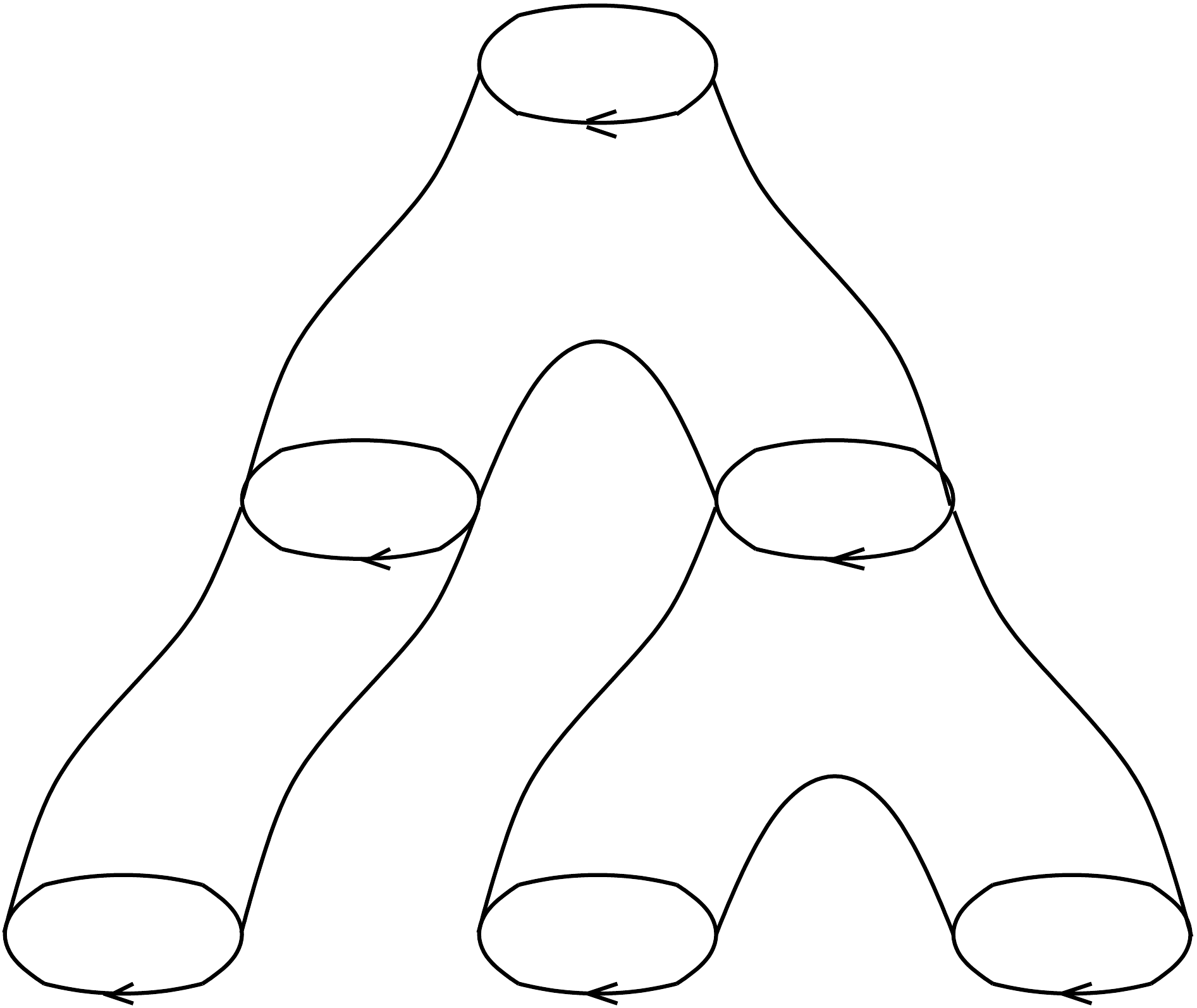}}  \hspace{1cm} \raisebox{-13pt}{\includegraphics[height=0.6in]{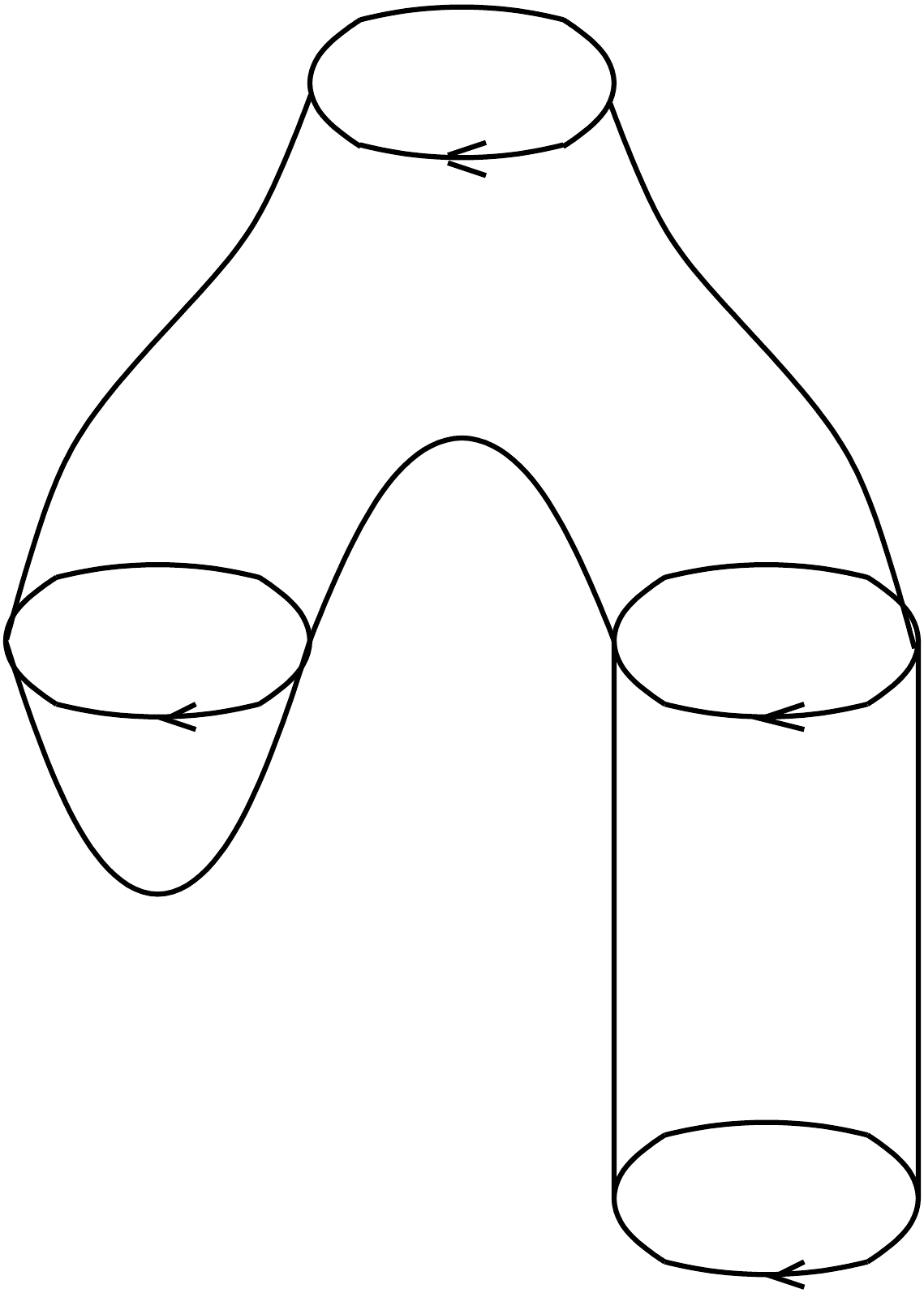}}\quad \cong \quad  \raisebox{-13pt}{\includegraphics[height=0.6in]{circle_frob4.pdf}}\quad \cong \quad \raisebox{-13pt}{\includegraphics[height=0.6in]{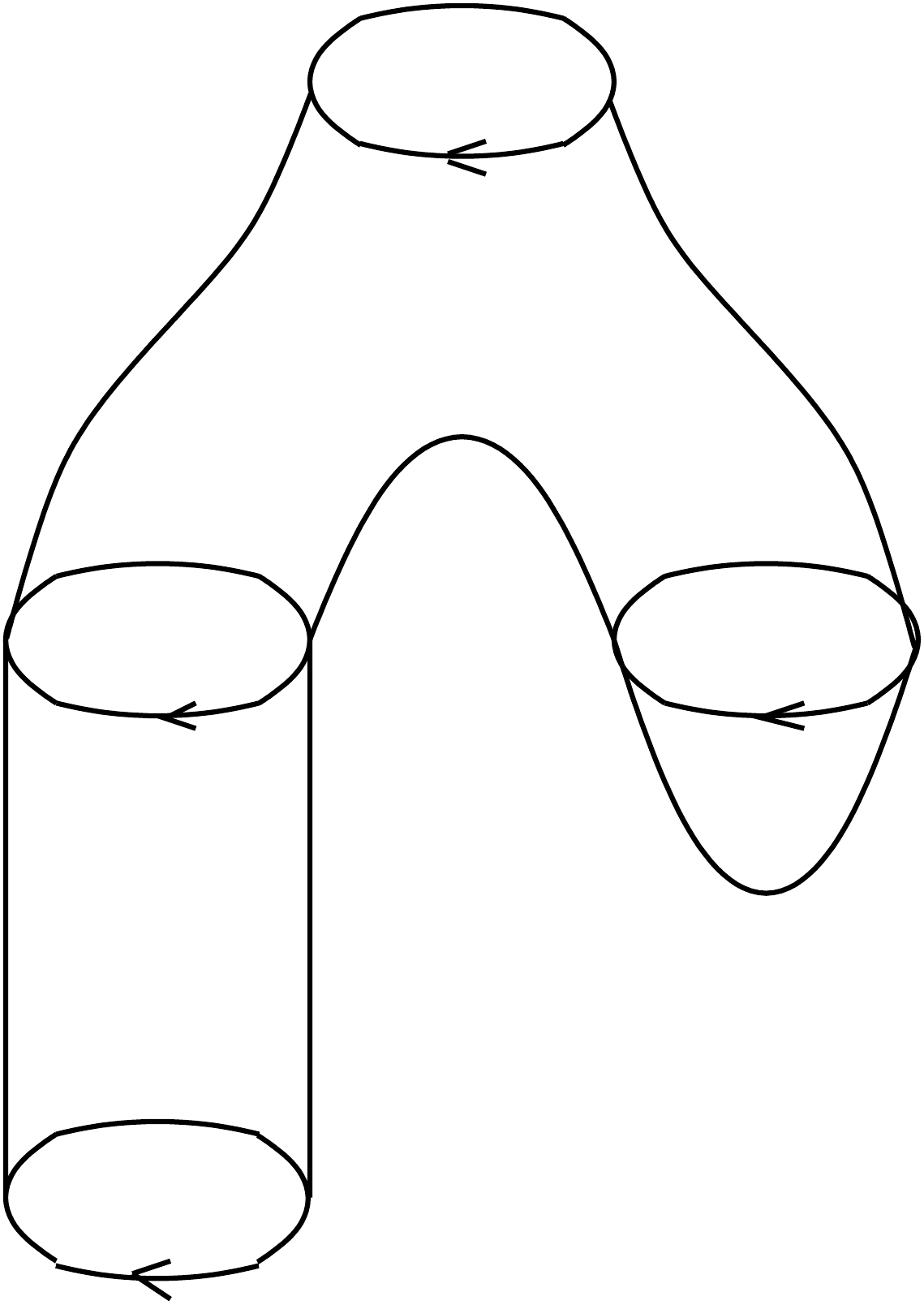}}\label{eq:circle_frob2}
\end{equation*}
\begin{equation*}
\raisebox{-13pt}{\includegraphics[height=0.6in]{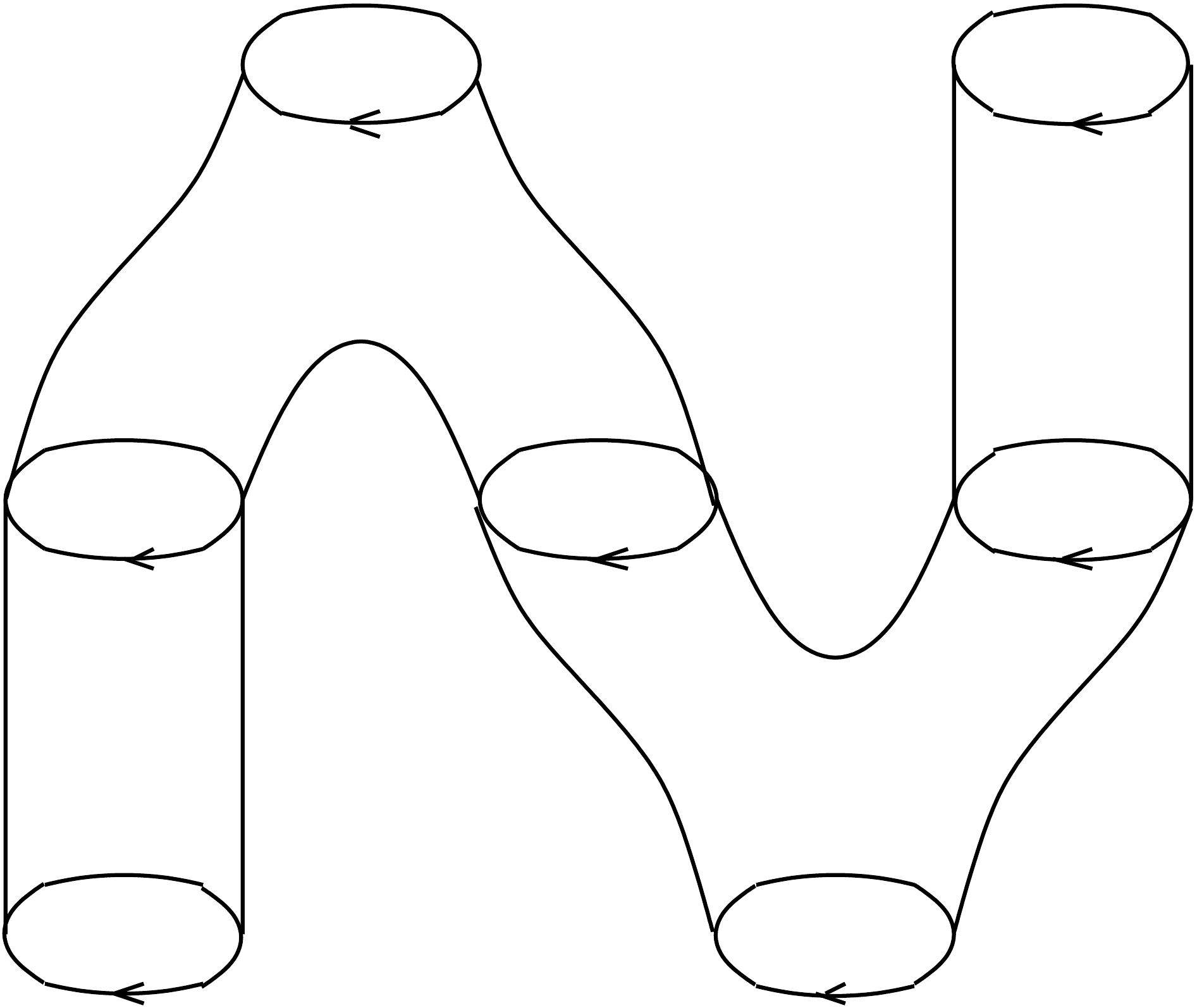}} \quad \cong \quad \raisebox{-13pt}{\includegraphics[height=0.6in]{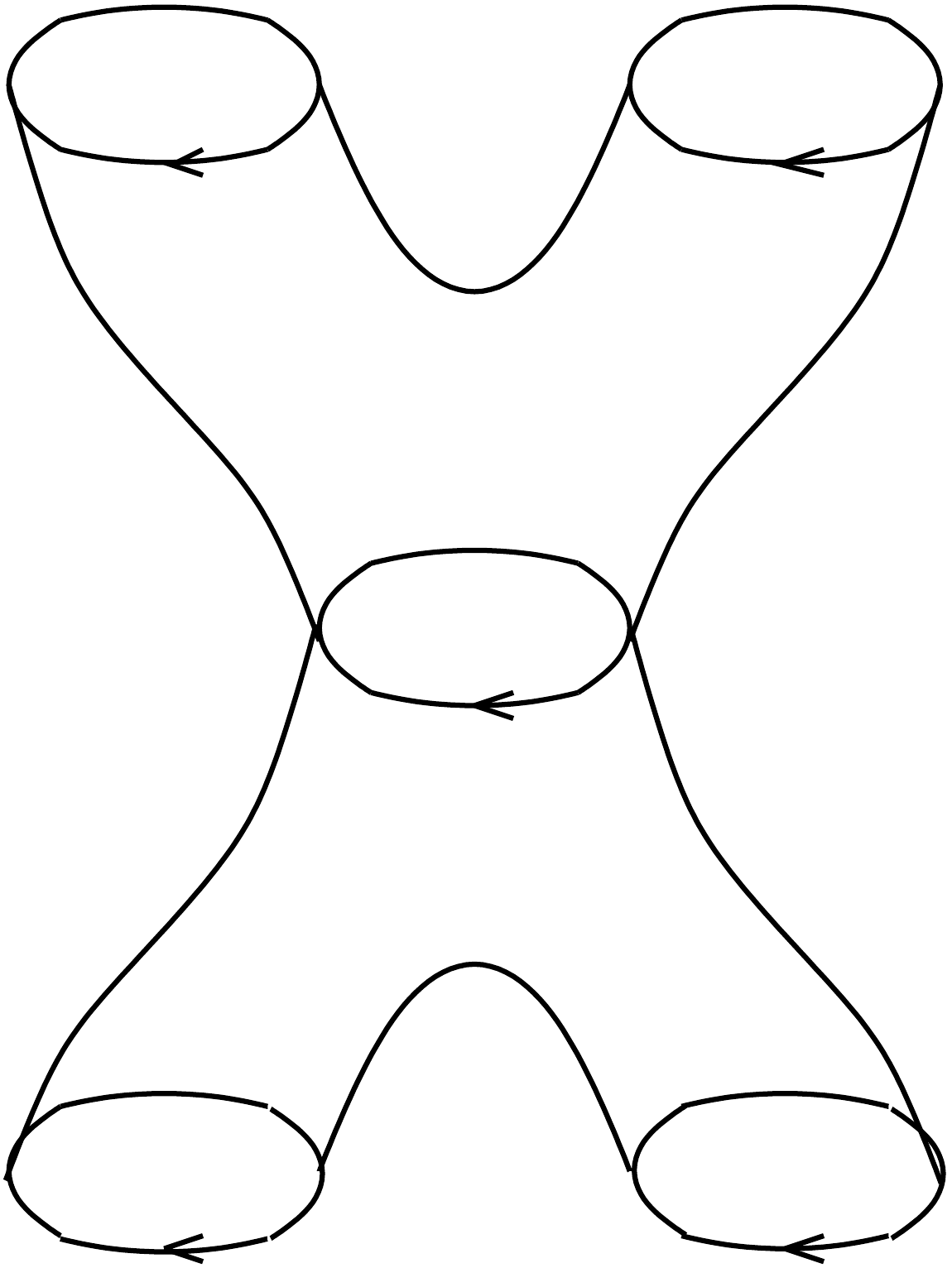}} \quad \cong \quad \raisebox{-13pt}{\includegraphics[height=0.6in]{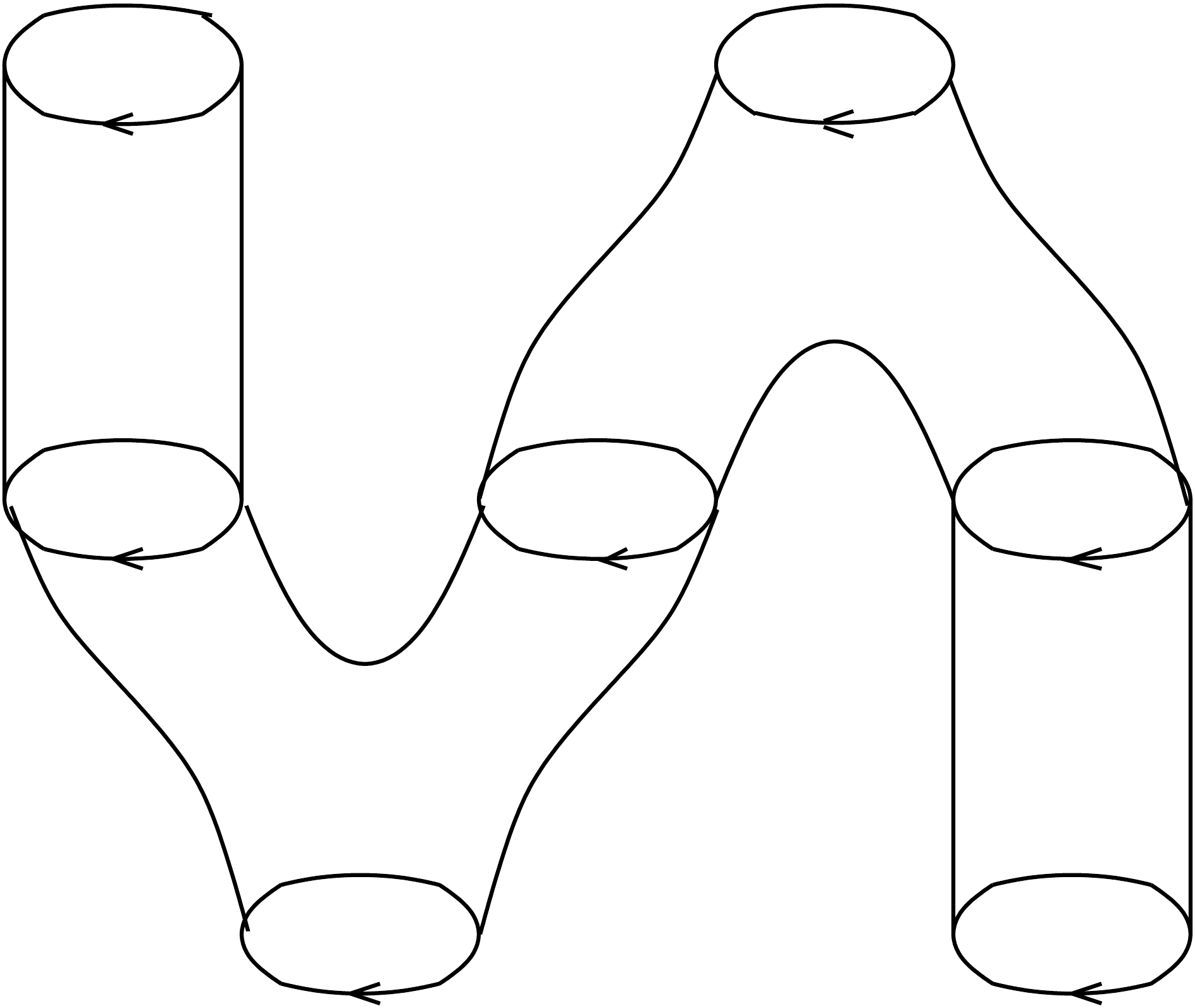}} \hspace{1cm} \raisebox{-13pt}{\includegraphics[height=0.6in]{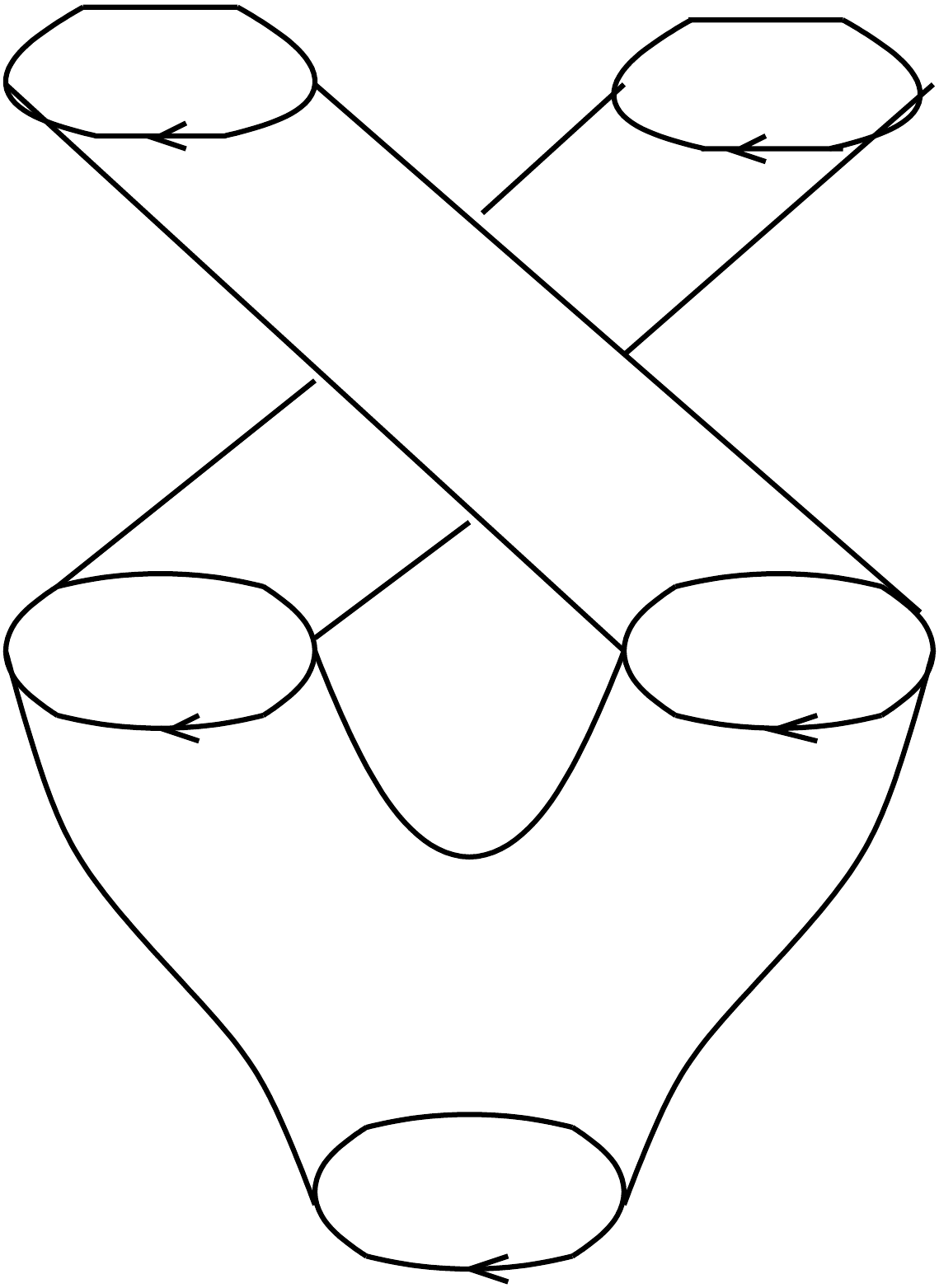}} \quad \cong \quad \raisebox{-13pt}{\includegraphics[height=0.6in]{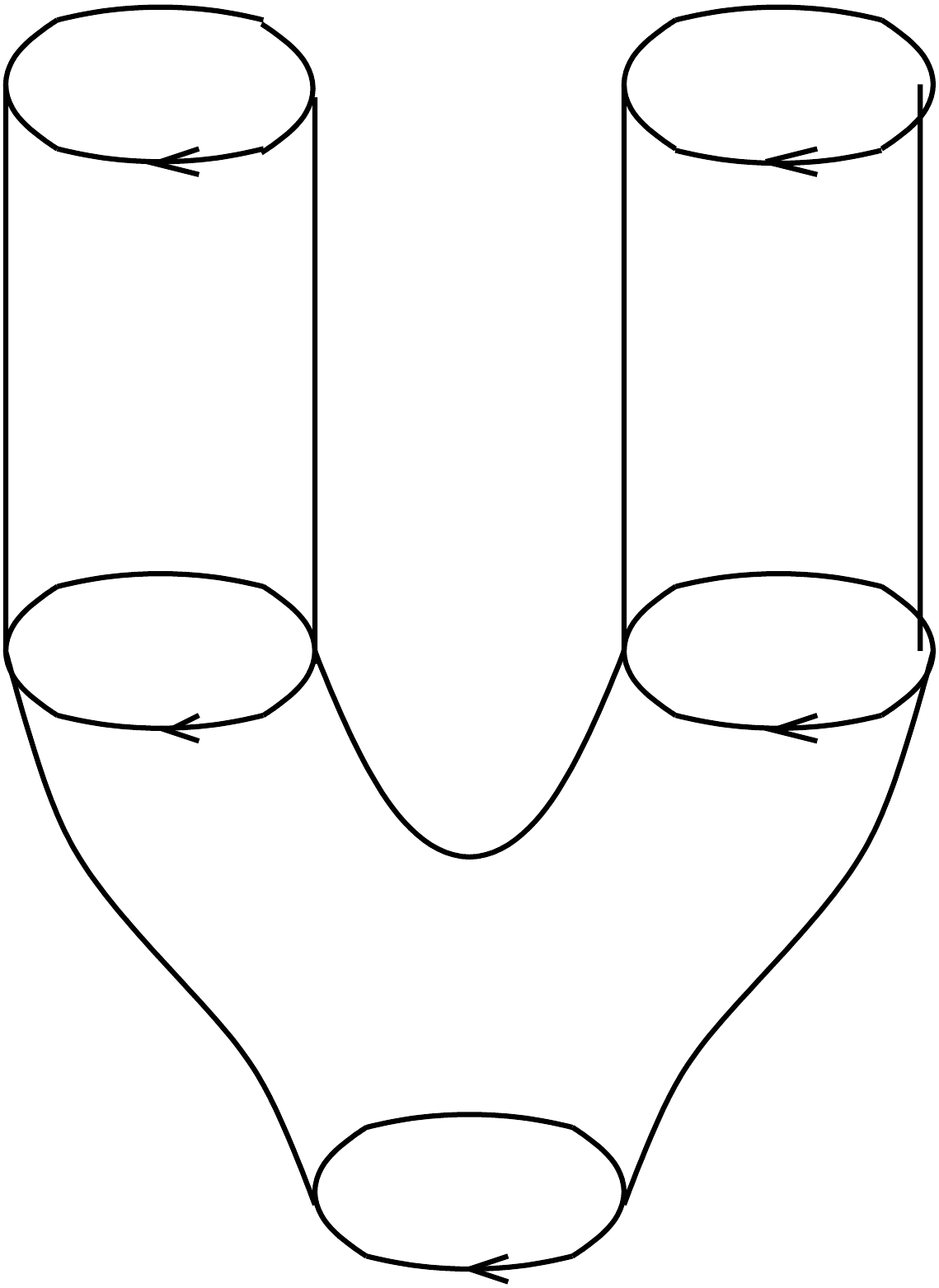}}\label{eq:circle_frob3}
\end{equation*}

\item The bi-web  $\raisebox{-3pt}{\includegraphics[height=0.15in]{singcircle.pdf}}$ forms a symmetric Frobenius algebra object:

\begin{equation*}
\raisebox{-13pt}{\includegraphics[height=0.6in]{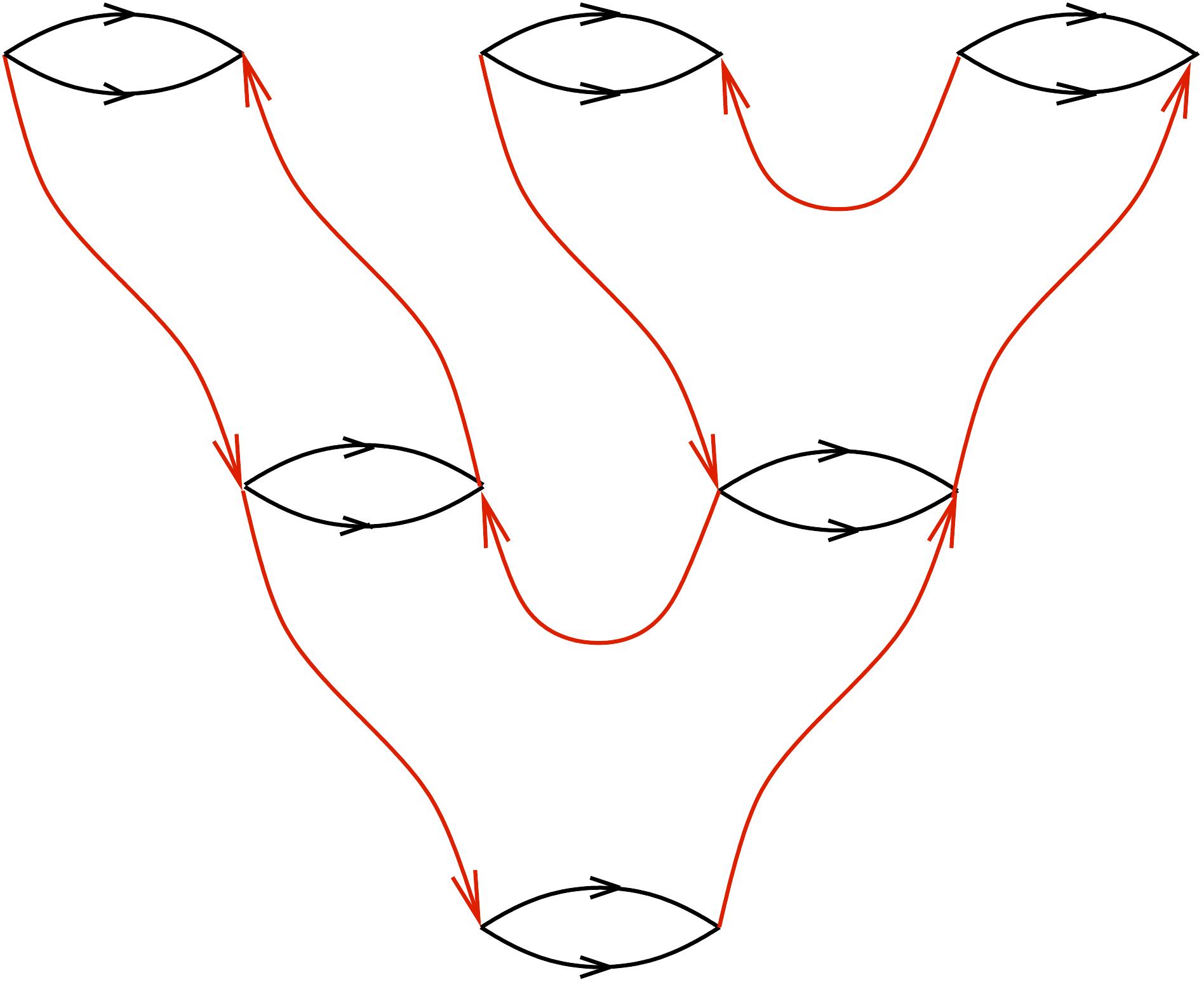}}\quad \cong \quad  \raisebox{-13pt}{\includegraphics[height=0.6in]{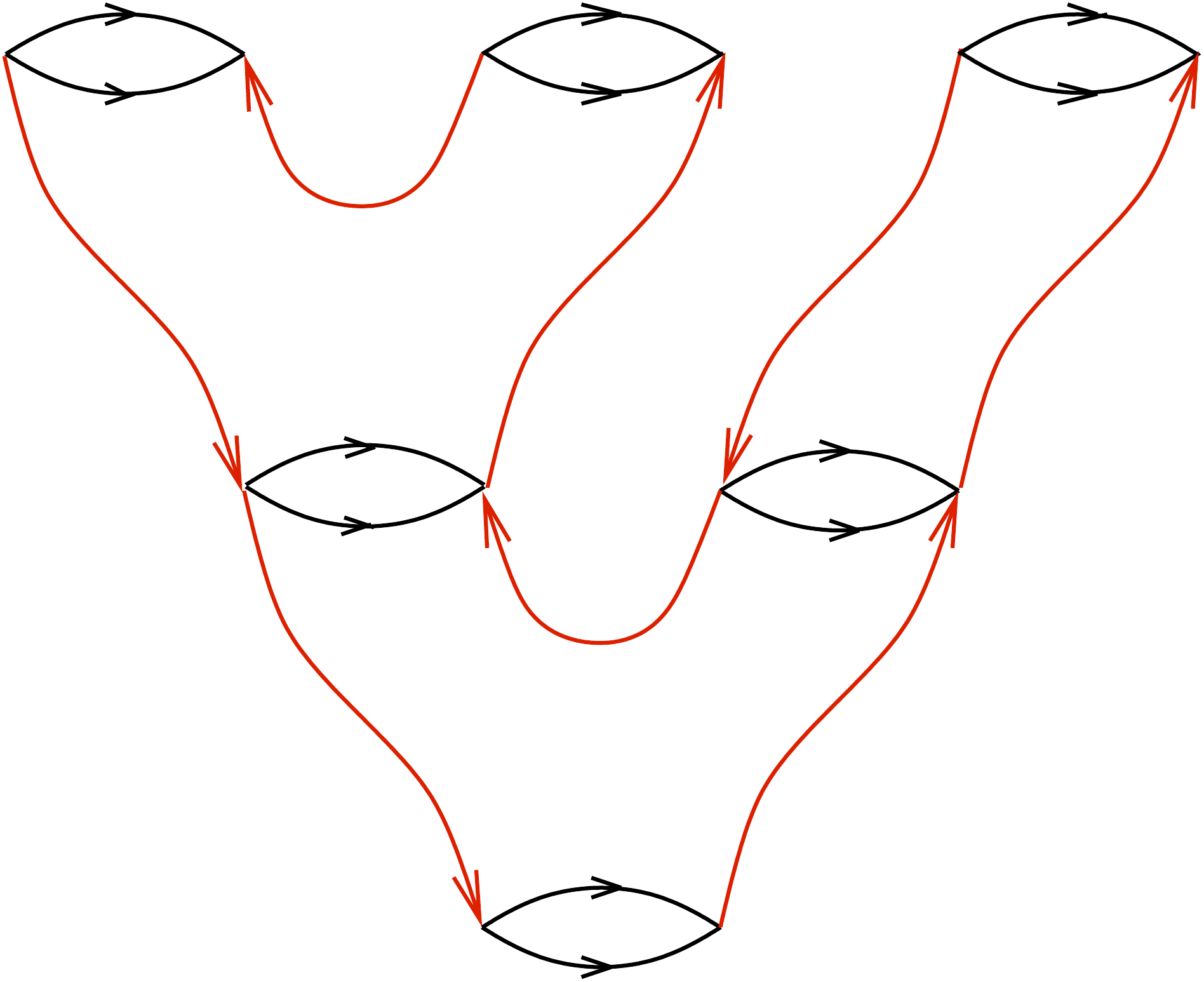}} \hspace{1cm} \raisebox{-13pt}{\includegraphics[height=0.6in]{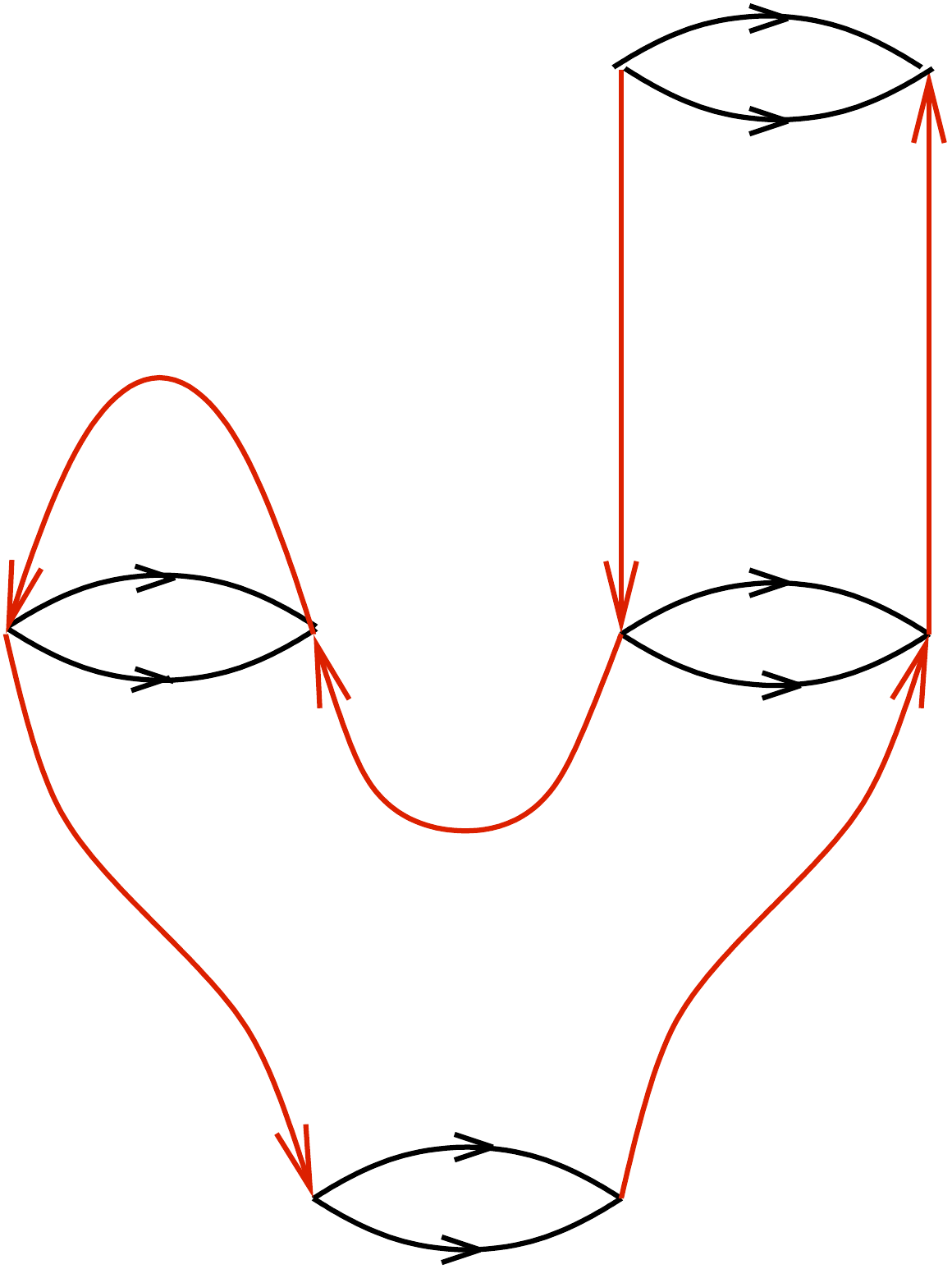}}\quad \cong \quad  \raisebox{-13pt}{\includegraphics[height=0.6in]{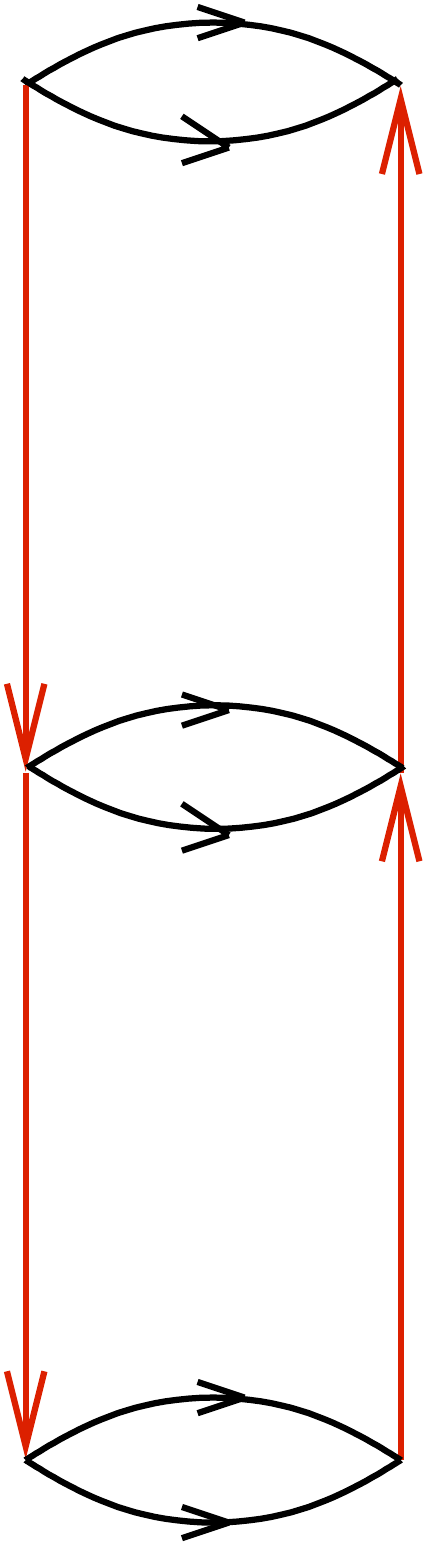}}\quad \cong \quad \raisebox{-13pt}{\includegraphics[height=0.6in]{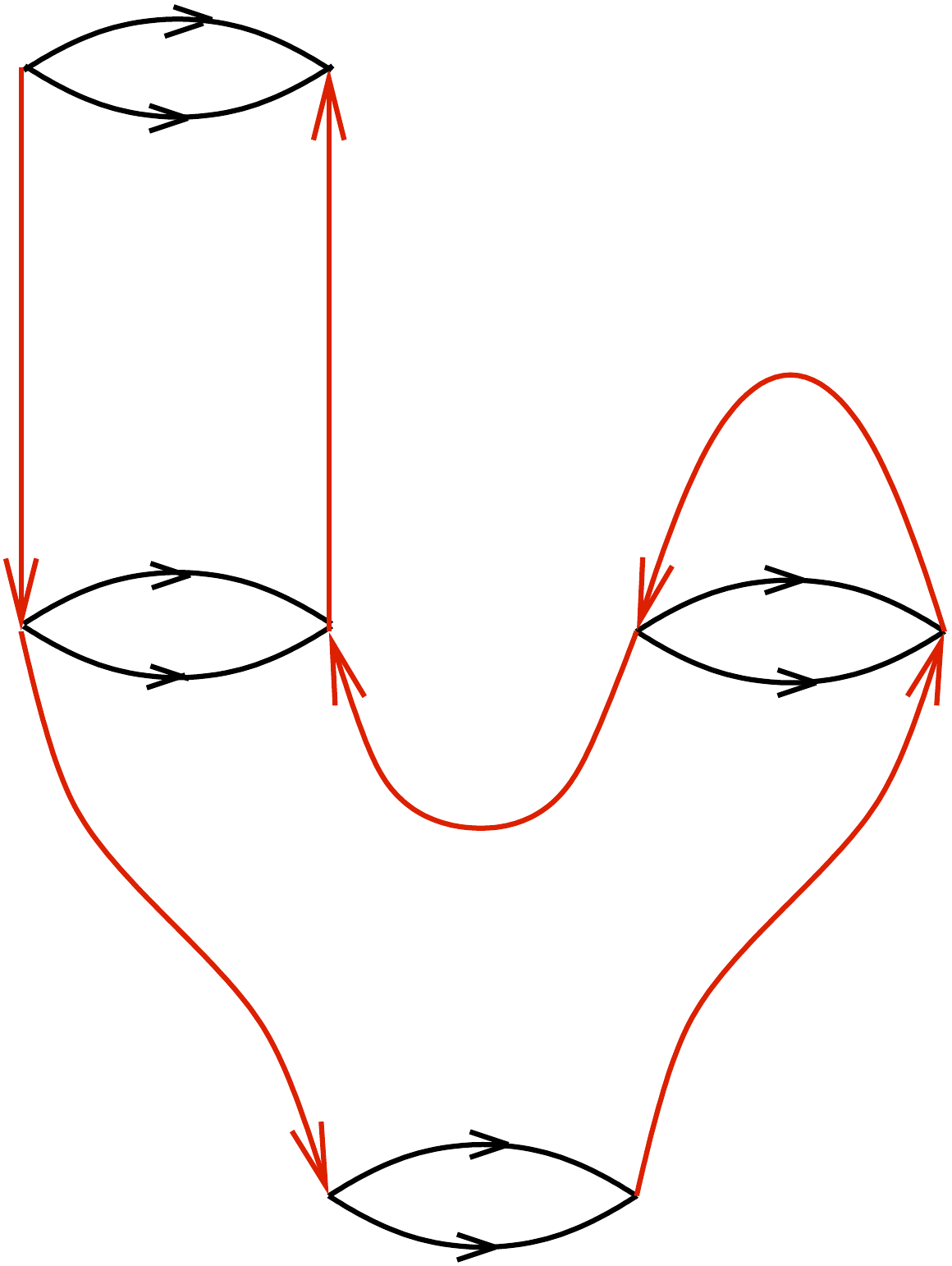}} \label{eq:web_frob1}
\end{equation*}
\begin{equation*}
\raisebox{-13pt}{\includegraphics[height=0.6in]{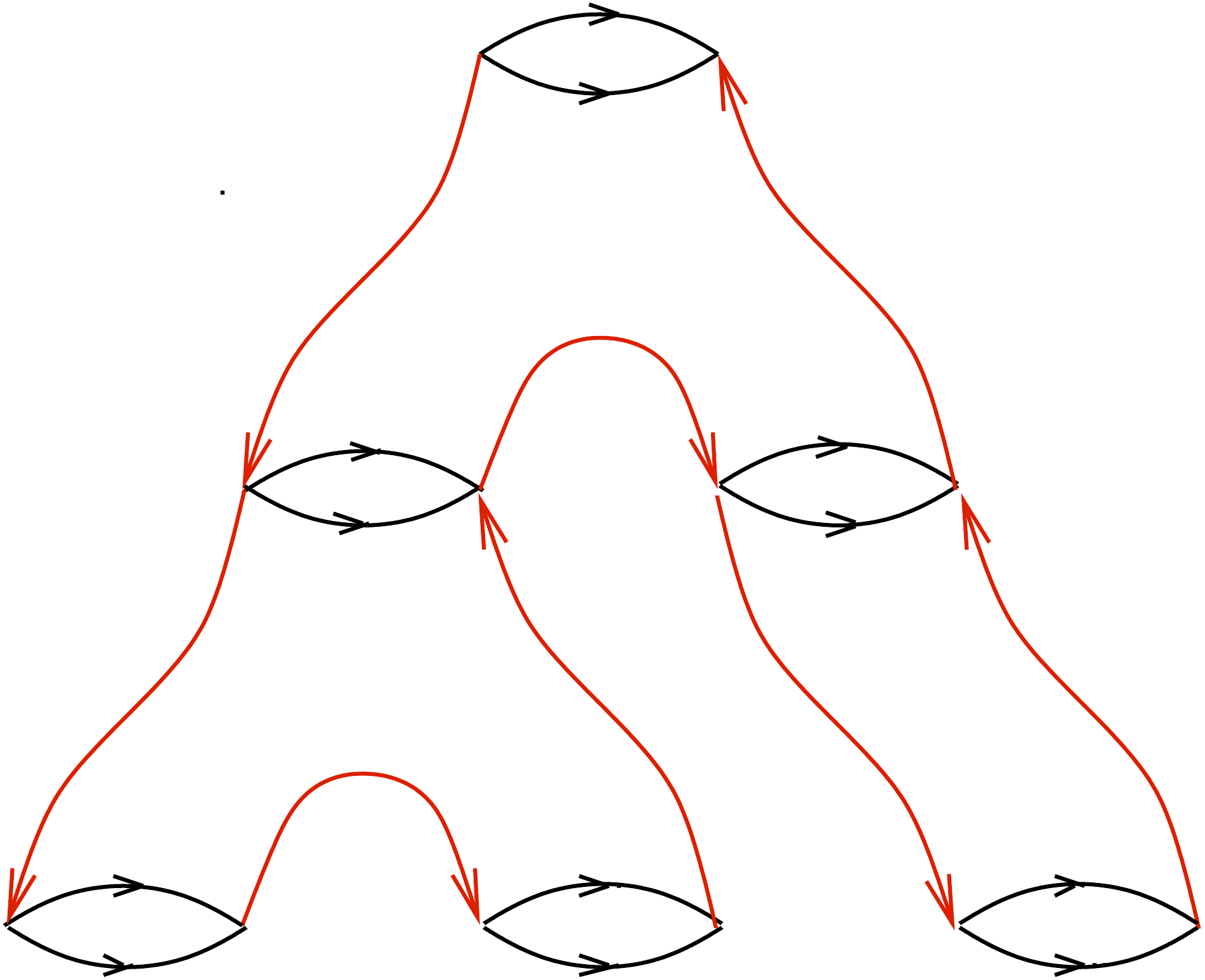}} \quad \cong \quad \raisebox{-13pt}{\includegraphics[height=0.6in]{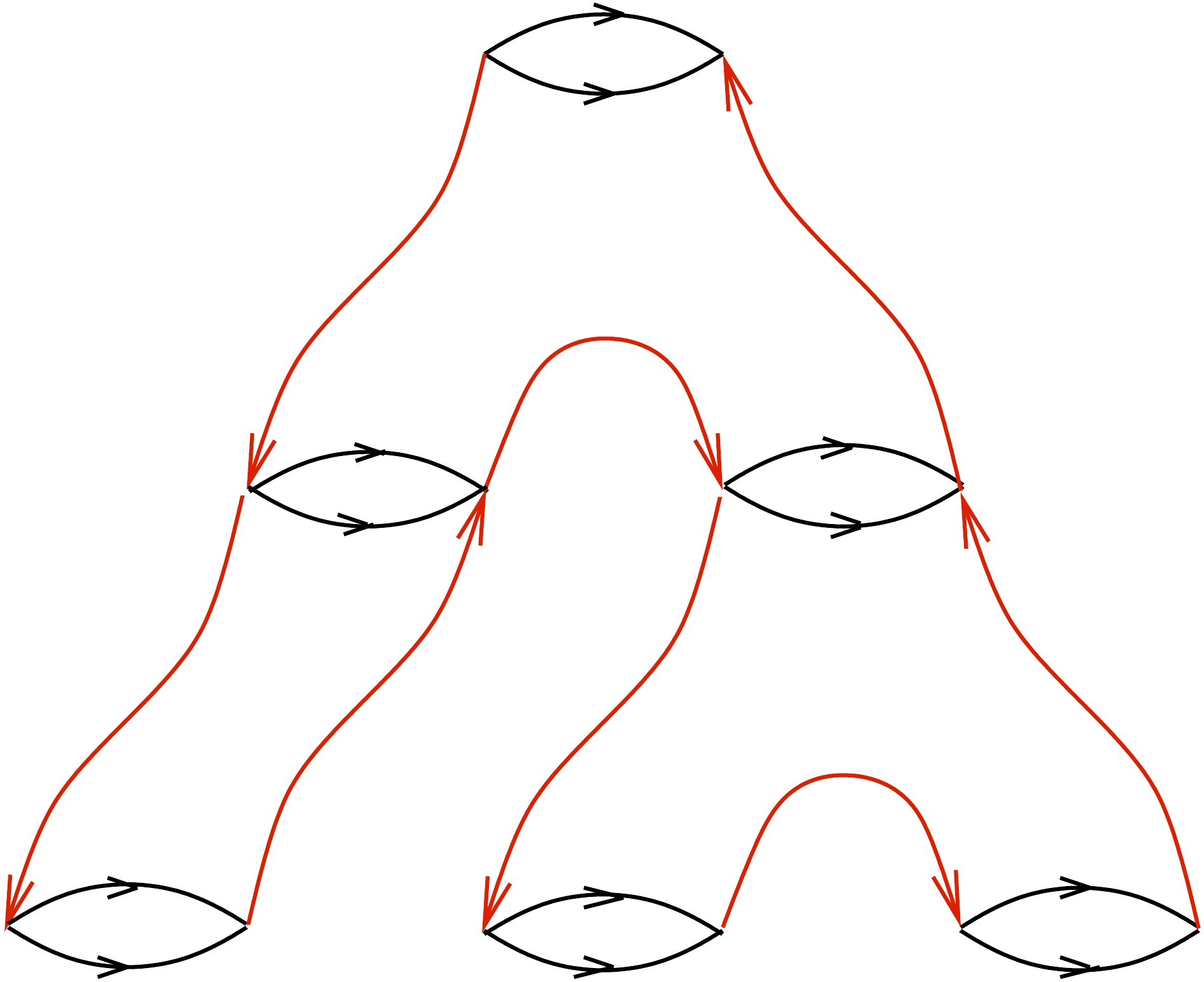}}  \hspace{1cm} \raisebox{-13pt}{\includegraphics[height=0.6in]{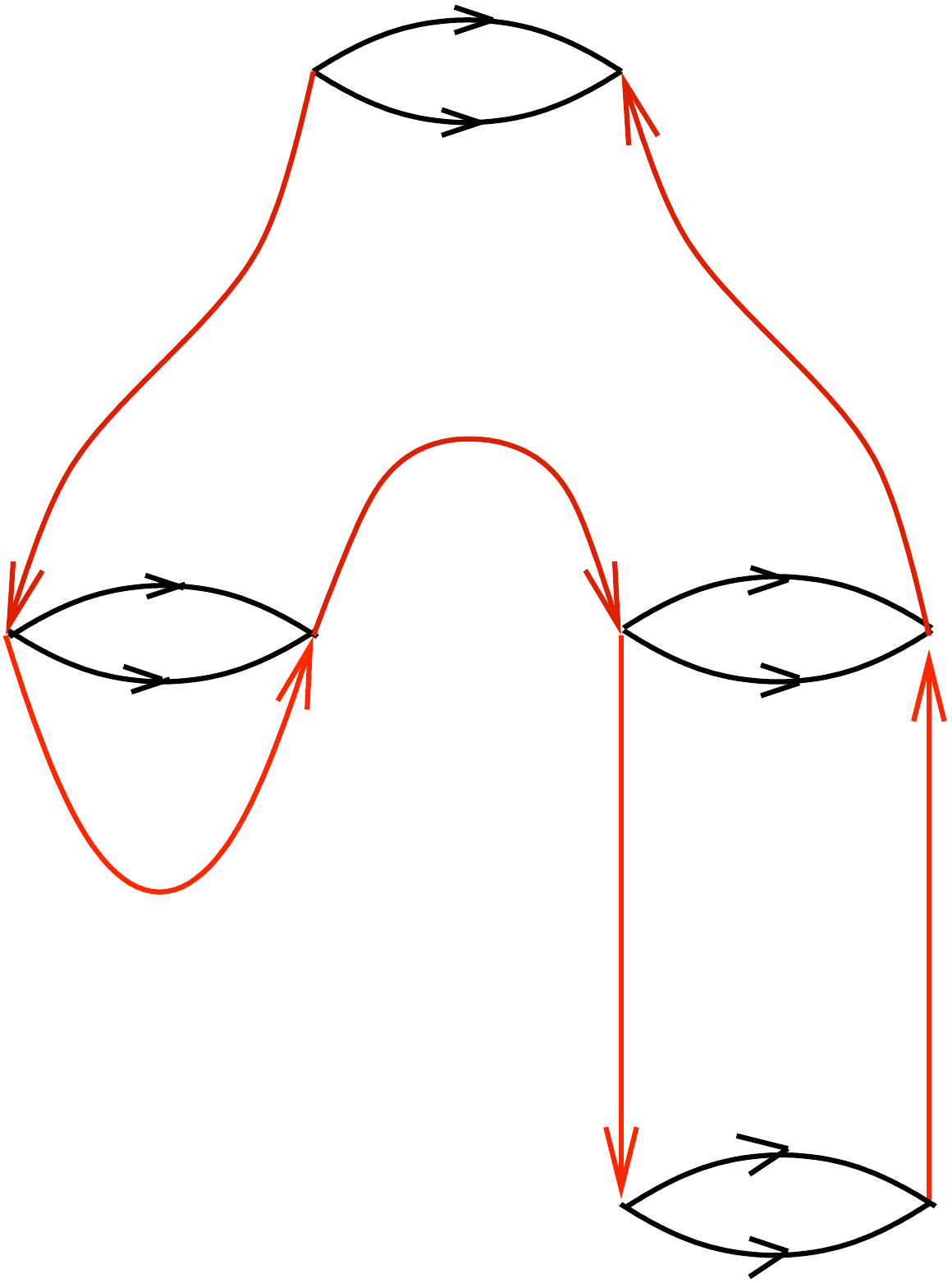}}\quad \cong \quad  \raisebox{-13pt}{\includegraphics[height=0.6in]{web_frob4.pdf}}\quad \cong \quad \raisebox{-13pt}{\includegraphics[height=0.6in]{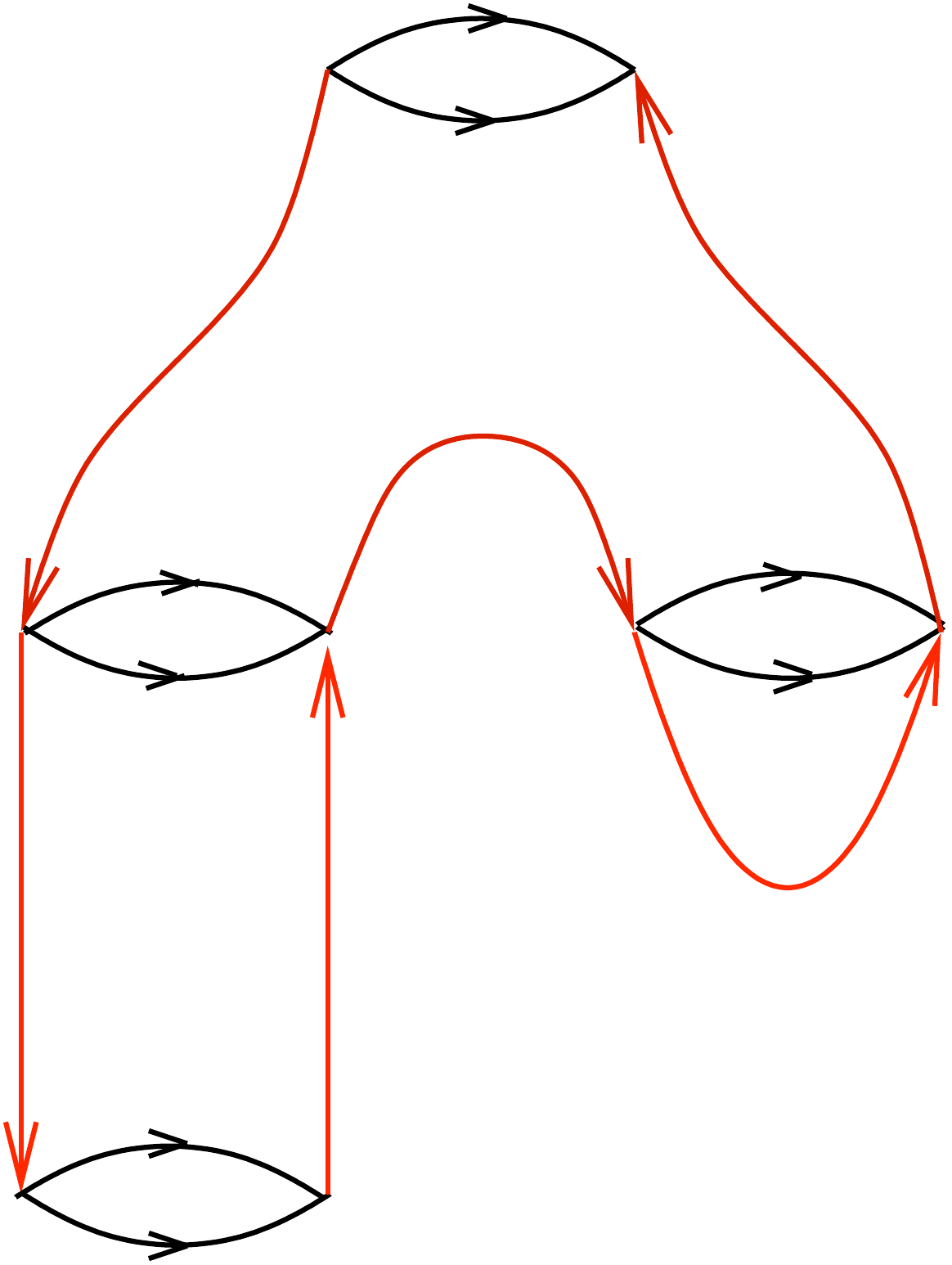}}
\label{eq:web_frob2}
\end{equation*}
\begin{equation*}
 \raisebox{-13pt}{\includegraphics[height=0.6in]{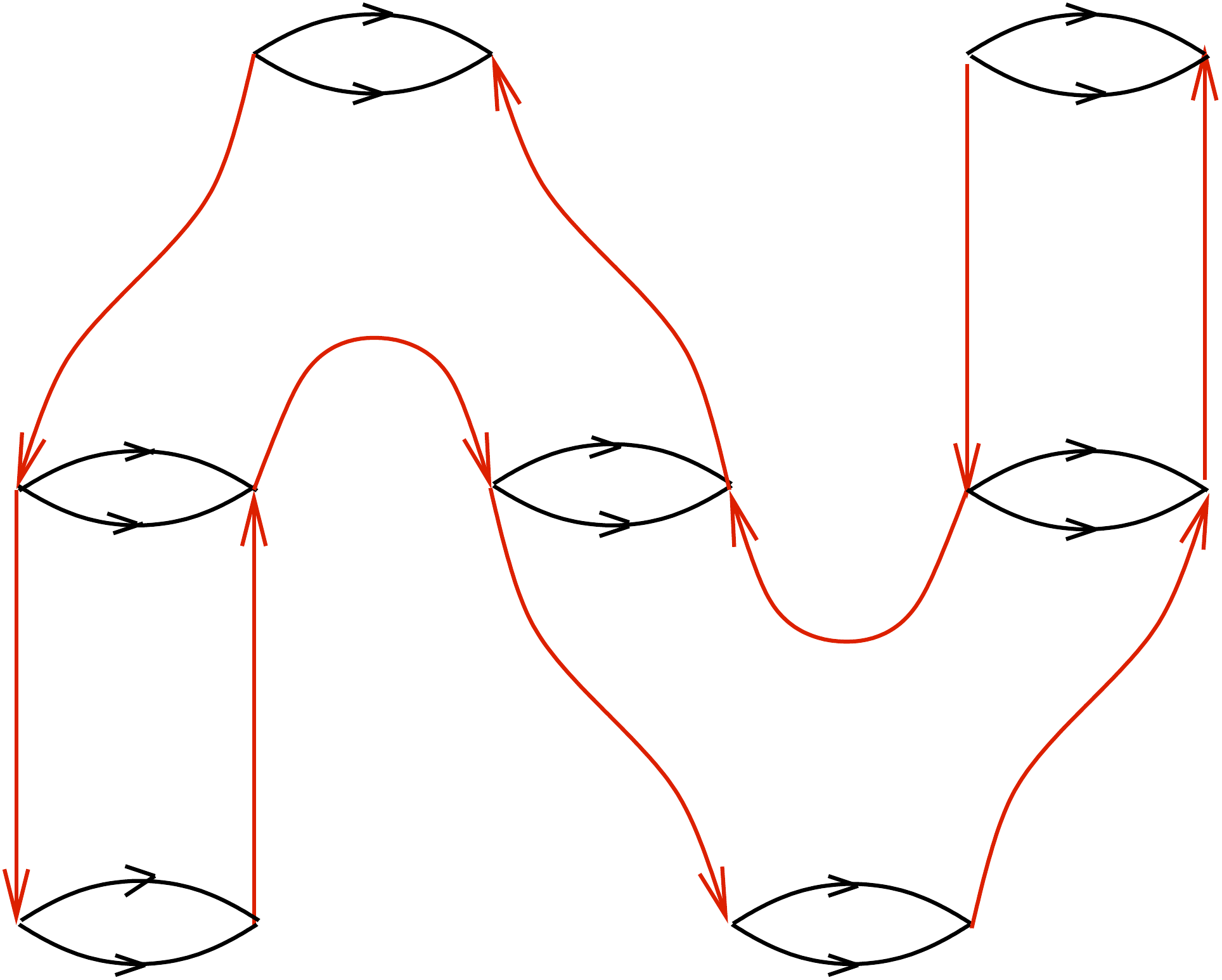}} \quad \cong \quad \raisebox{-13pt}{\includegraphics[height=0.6in]{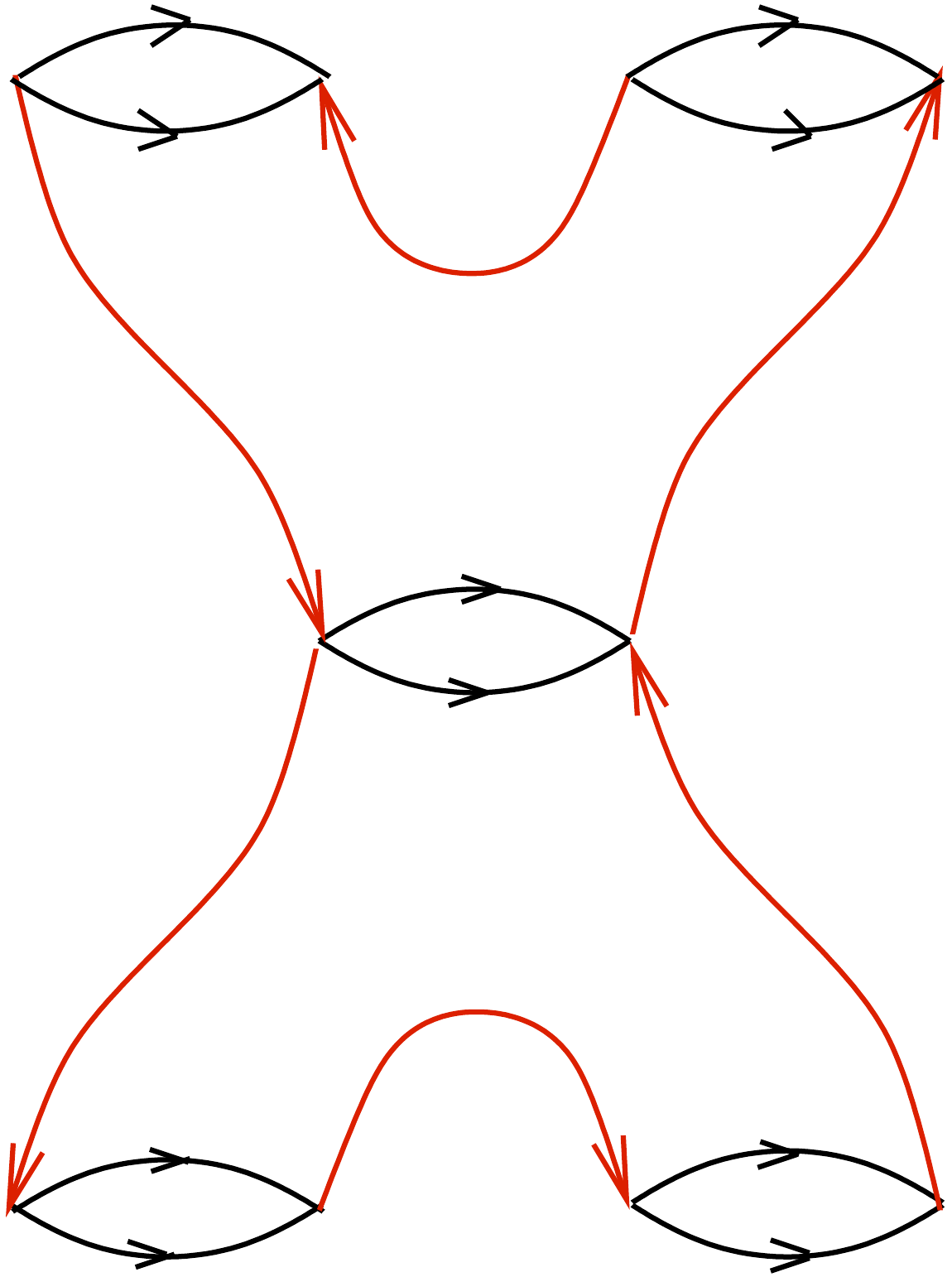}} \quad \cong \quad \raisebox{-13pt}{\includegraphics[height=0.6in]{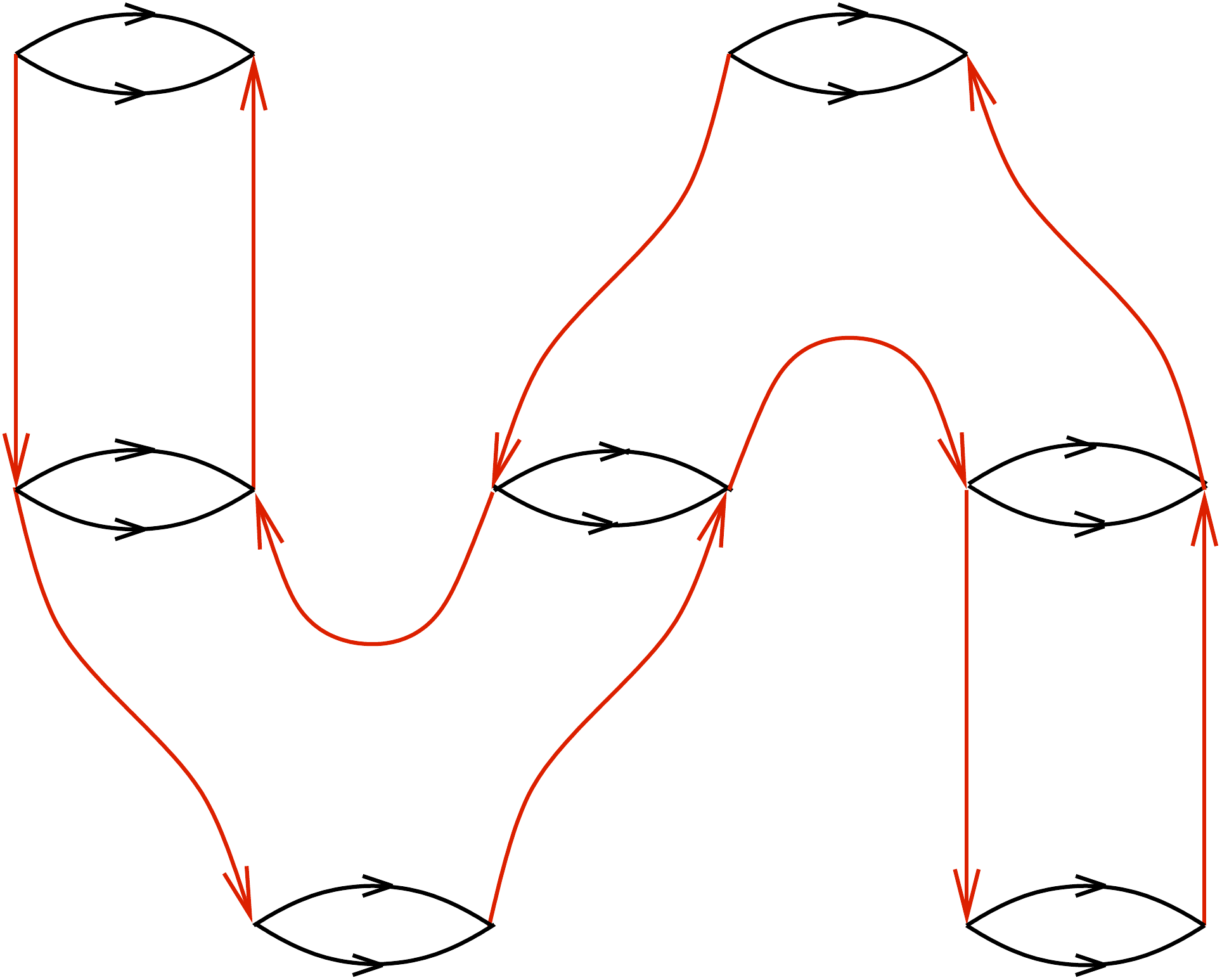}} \hspace{1cm} \raisebox{-13pt}{\includegraphics[height=0.6in]{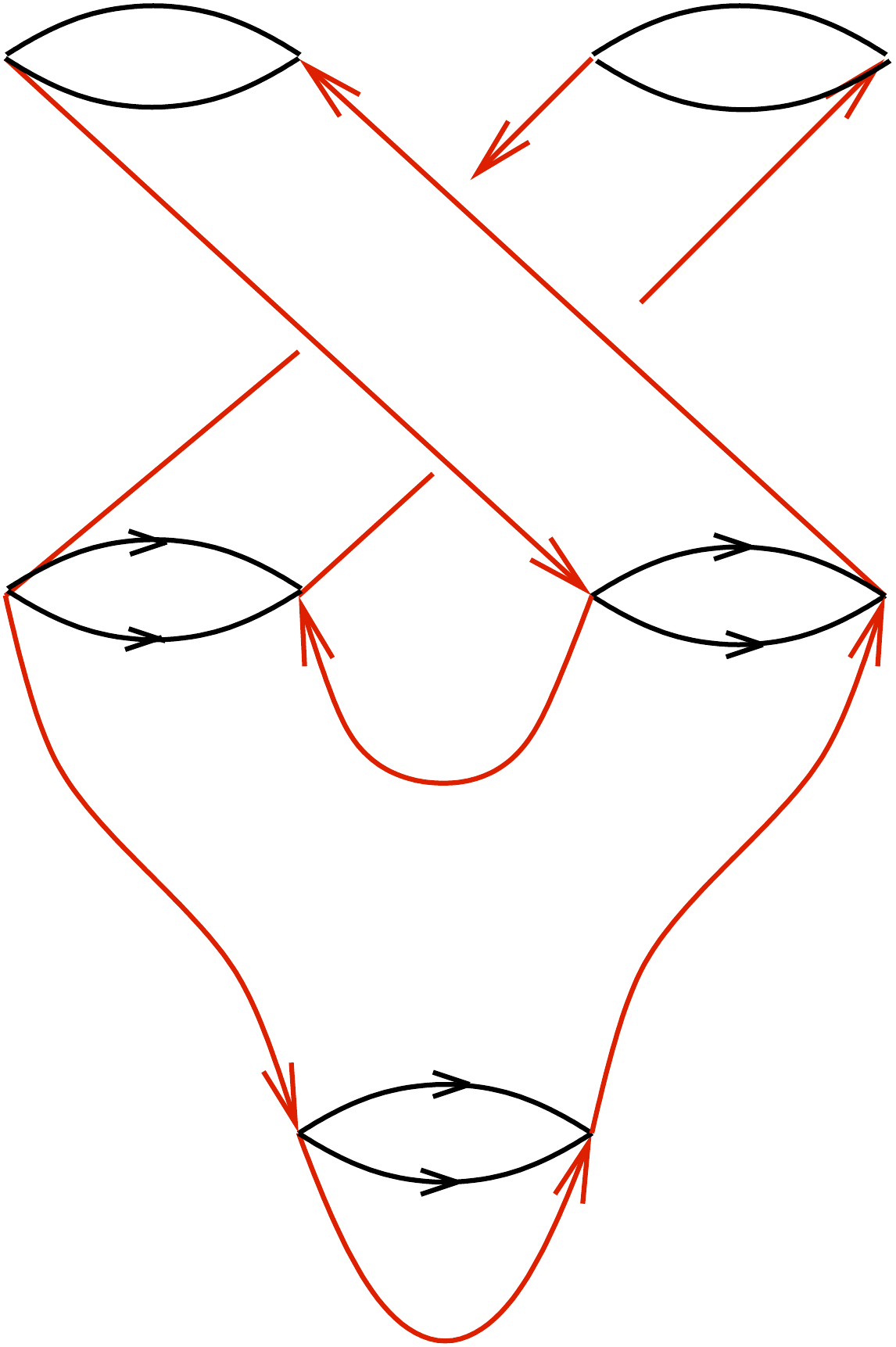}} \quad \cong \quad \raisebox{-13pt}{\includegraphics[height=0.6in]{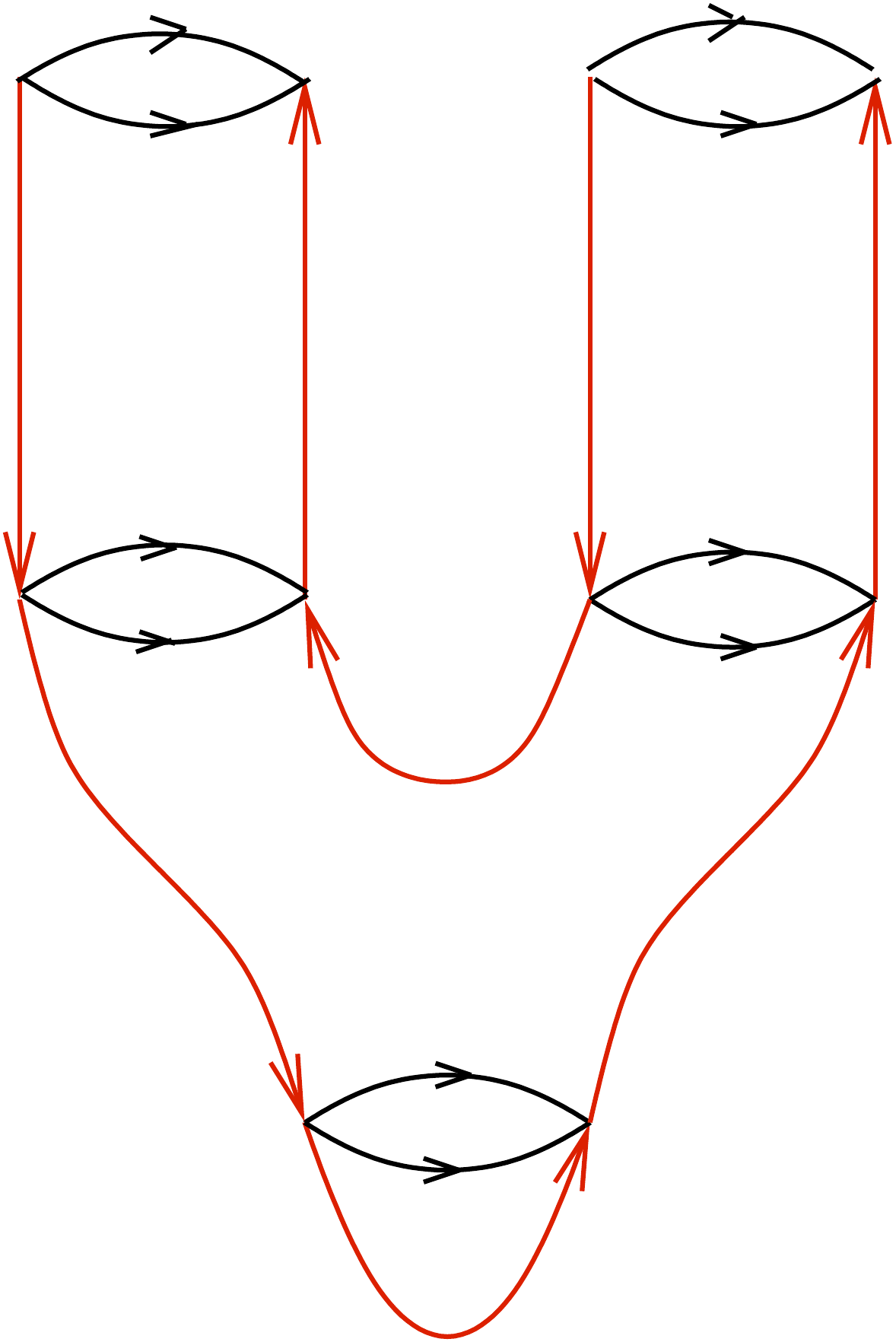}}
 \label{eq:web_frob3}
 \end{equation*}

\item The ``zipper''  cobordisms \,$ \raisebox{-10pt}{\includegraphics[height=0.4in]{zipper.pdf}}$\, and \,$\raisebox{-10pt}{\includegraphics[height=0.4in]{zipper_second.pdf}}$\,  form algebra homomorphisms:

\begin{equation*}
 \psset{xunit=.22cm,yunit=.22cm}
 \begin{pspicture}(4,5)
  \rput(-2, 2.9){\includegraphics[height=0.4in]{zipper.pdf}}
  \rput(2, 2.9){\includegraphics[height=0.4in]{zipper.pdf}}
\rput(0, -0.8){\includegraphics[height=0.4in]{singmult.pdf}}
\end{pspicture} \cong
 \psset{xunit=.22cm,yunit=.22cm}
 \begin{pspicture}(5,5)
  \rput(2.5, 2.5){\includegraphics[height=0.4in]{mult.pdf}}
  \rput(2.5, -1){\includegraphics[height=0.4in]{zipper.pdf}}
\end{pspicture}
\vspace{0.5cm}
\hspace{2cm}
 \psset{xunit=.22cm,yunit=.22cm}
 \begin{pspicture}(2,5)
  \rput(0, 2){\includegraphics[height=0.2in]{unit.pdf}}
  \rput(0, -0.3){\includegraphics[height=0.4in]{zipper.pdf}}
\end{pspicture} \cong
 \psset{xunit=.22cm,yunit=.22cm}
 \begin{pspicture}(2,5)
\rput(2,0.5){\includegraphics[height=0.2in]{singunit.pdf}}
\end{pspicture}
\end{equation*}

\begin{equation*}
 \psset{xunit=.22cm,yunit=.22cm}
 \begin{pspicture}(4,5)
  \rput(-2, 2.9){\includegraphics[height=0.4in]{zipper_second.pdf}}
  \rput(2, 2.9){\includegraphics[height=0.4in]{zipper_second.pdf}}
\rput(0, -0.8){\includegraphics[height=0.4in]{singmult.pdf}}
\end{pspicture} \cong
 \psset{xunit=.22cm,yunit=.22cm}
 \begin{pspicture}(5,5)
  \rput(2.5, 2.5){\includegraphics[height=0.4in]{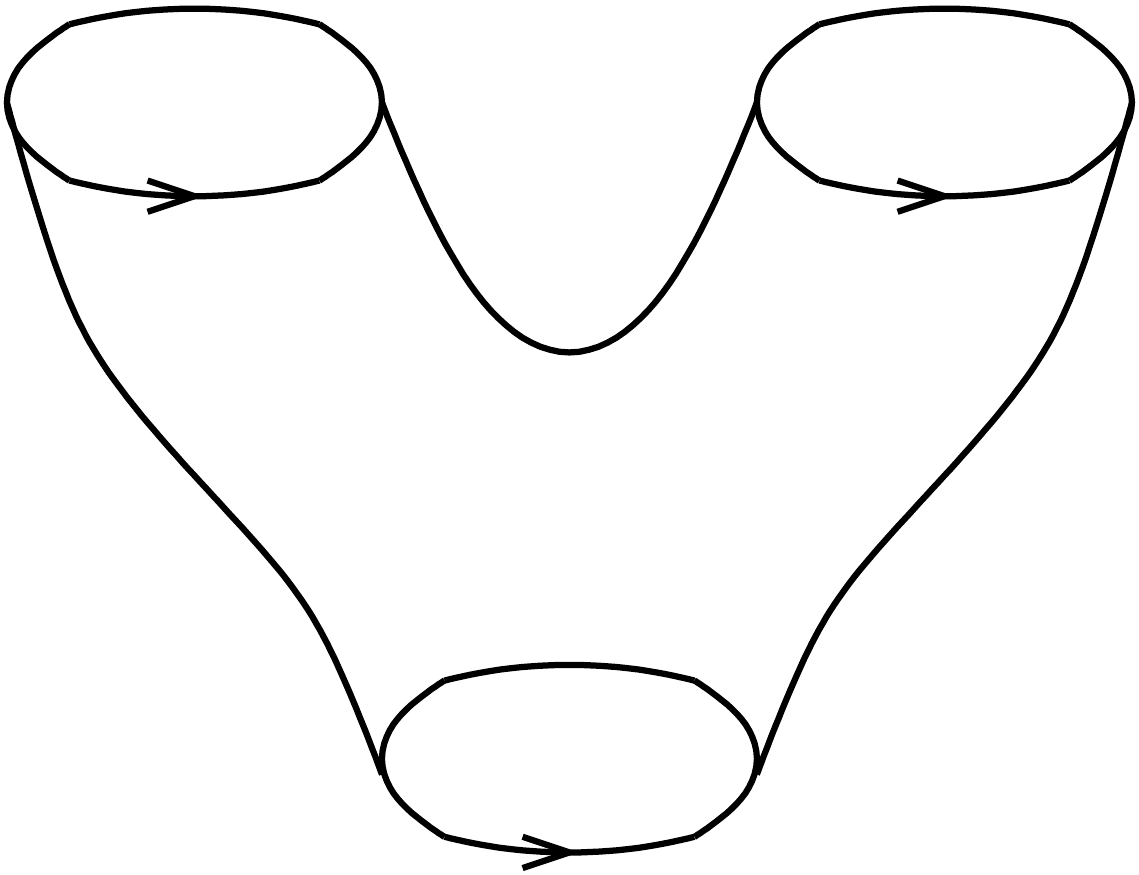}}
  \rput(2.5, -1){\includegraphics[height=0.4in]{zipper_second.pdf}}
\end{pspicture}
\vspace{0.5cm}
\hspace{2cm}
 \psset{xunit=.22cm,yunit=.22cm}
 \begin{pspicture}(2,5)
  \rput(0, 2){\includegraphics[height=0.2in]{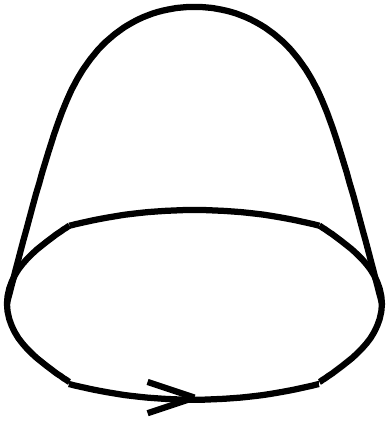}}
  \rput(0, -0.3){\includegraphics[height=0.4in]{zipper_second.pdf}}
\end{pspicture} \cong
 \psset{xunit=.22cm,yunit=.22cm}
 \begin{pspicture}(2,5)
\rput(2,0.5){\includegraphics[height=0.2in]{singunit.pdf}}
\end{pspicture}
\vspace{0.4cm}
\end{equation*}

\item The ``cozipper'' cobordism \,$\raisebox{-10pt}{\includegraphics[height=0.4in]{cozipper.pdf}}$\, is dual to the zipper \,$\raisebox{-10pt}{\includegraphics[height=0.4in]{zipper.pdf}}$\,. Likewise,  the other cozipper \,$\raisebox{-10pt}{\includegraphics[height=0.4in]{cozipper_second.pdf}}$\, is dual to the zipper \,$\raisebox{-10pt}{\includegraphics[height=0.4in]{zipper_second.pdf}}$\,:
\begin{equation*}
 \psset{xunit=.22cm,yunit=.22cm}
 \begin{pspicture}(4,5)
  \rput(-2, 2.7){\includegraphics[height=0.4in]{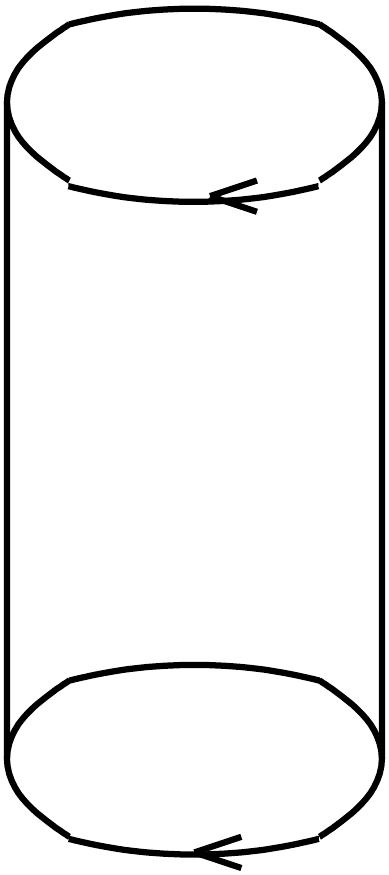}}
  \rput(2, 2.7){\includegraphics[height=0.4in]{cozipper.pdf}}
\rput(0, -0.8){\includegraphics[height=0.4in]{mult.pdf}}
\rput(0, -3.1){\includegraphics[height=0.2in]{counit.pdf}}
\end{pspicture} \cong
 \psset{xunit=.22cm,yunit=.22cm}
 \begin{pspicture}(5,5)
   \rput(2, 2.9){\includegraphics[height=0.4in]{zipper.pdf}}
  \rput(6, 2.9){\includegraphics[height=0.4in]{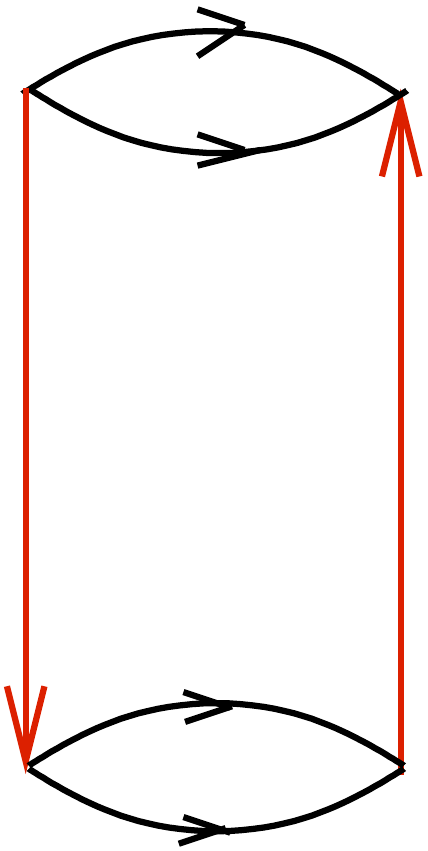}}
\rput(4, -0.8){\includegraphics[height=0.4in]{singmult.pdf}}
\rput(4, -3.2){\includegraphics[height=0.17in]{singcounit.pdf}}
\end{pspicture}
\hspace{3cm}
 \psset{xunit=.22cm,yunit=.22cm}
 \begin{pspicture}(4,6)
  \rput(-2, 2.7){\includegraphics[height=0.4in]{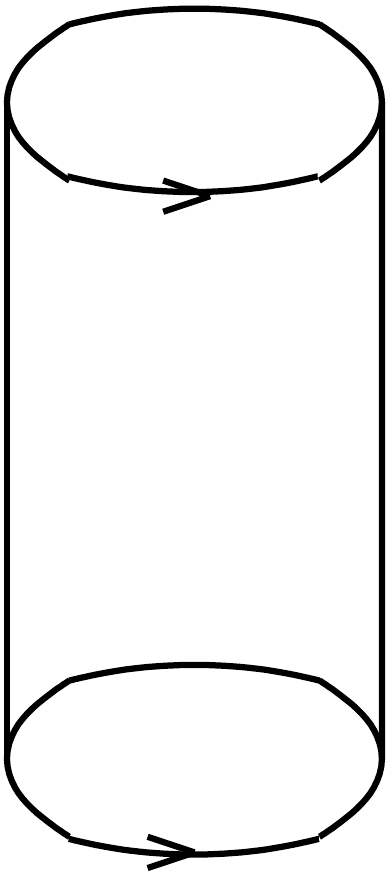}}
  \rput(2, 2.7){\includegraphics[height=0.4in]{cozipper_second.pdf}}
\rput(0, -0.8){\includegraphics[height=0.4in]{mult_second.pdf}}
\rput(0, -3.1){\includegraphics[height=0.2in]{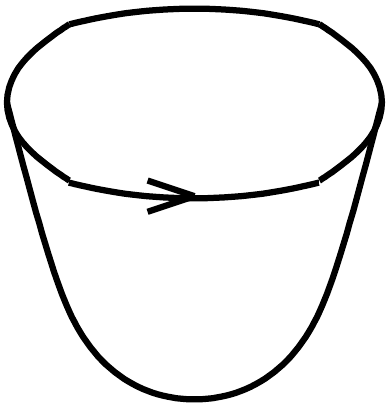}}
\end{pspicture} \cong
 \psset{xunit=.22cm,yunit=.22cm}
 \begin{pspicture}(5,5)
   \rput(2, 2.9){\includegraphics[height=0.4in]{zipper_second.pdf}}
  \rput(6, 2.9){\includegraphics[height=0.4in]{identity_web.pdf}}
\rput(4, -0.8){\includegraphics[height=0.4in]{singmult.pdf}}
\rput(4, -3.2){\includegraphics[height=0.17in]{singcounit.pdf}}
\end{pspicture}
\vspace{0.5cm}
\end{equation*}
\vspace{0.7cm}

 \item \textit{Centrality relations}.

\vspace{0.2cm}
\begin{equation*}
 \psset{xunit=.22cm,yunit=.22cm}
 \begin{pspicture}(4,4)
  \rput(-2, 2.7){\includegraphics[height=0.4in]{identity_web.pdf}}
  \rput(2, 2.7){\includegraphics[height=0.4in]{zipper.pdf}}
\rput(0, -1){\includegraphics[height=0.4in]{singmult.pdf}}
\end{pspicture} \cong
 \psset{xunit=.22cm,yunit=.22cm}
 \begin{pspicture}(4,7)
  \rput(2, 6.2){\includegraphics[height=0.4in]{identity_web.pdf}}
  \rput(6, 6.2){\includegraphics[height=0.4in]{zipper.pdf}}
  \rput(4, 2.5){\includegraphics[height=0.4in]{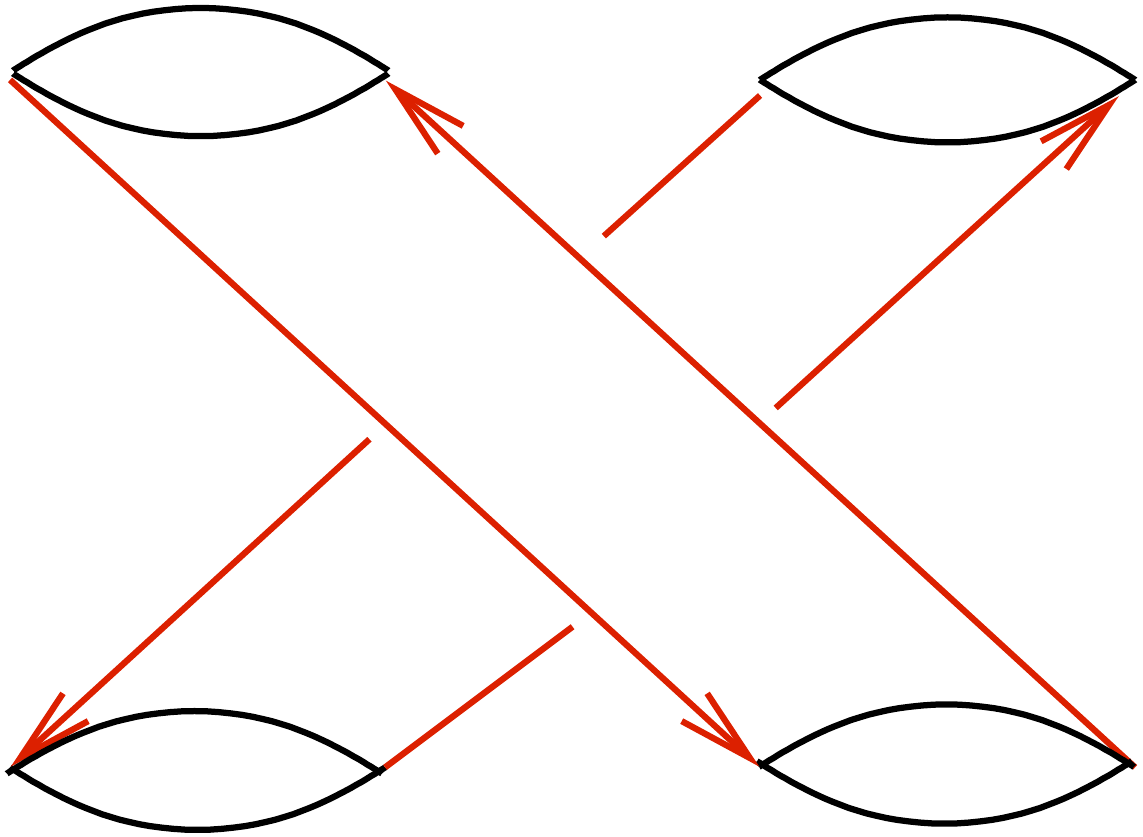}}
\rput(4, -1.3){\includegraphics[height=0.4in]{singmult.pdf}}
\end{pspicture}
\vspace{0.5cm}
\hspace{3cm}
 \psset{xunit=.22cm,yunit=.22cm}
 \begin{pspicture}(4,4)
  \rput(-2, 2.7){\includegraphics[height=0.4in]{identity_web.pdf}}
  \rput(2, 2.7){\includegraphics[height=0.4in]{zipper_second.pdf}}
\rput(0, -1){\includegraphics[height=0.4in]{singmult.pdf}}
\end{pspicture} \cong
 \psset{xunit=.22cm,yunit=.22cm}
 \begin{pspicture}(4,7)
  \rput(2, 6.2){\includegraphics[height=0.4in]{identity_web.pdf}}
  \rput(6, 6.2){\includegraphics[height=0.4in]{zipper_second.pdf}}
  \rput(4, 2.5){\includegraphics[height=0.4in]{twist.pdf}}
\rput(4, -1.2){\includegraphics[height=0.4in]{singmult.pdf}}
\end{pspicture}
\vspace{0.4cm}
\end{equation*}

\item \textit{Genus-one relations}.

\begin{equation*}
\raisebox{-30pt}{\includegraphics[width = 0.5in, height=1in]{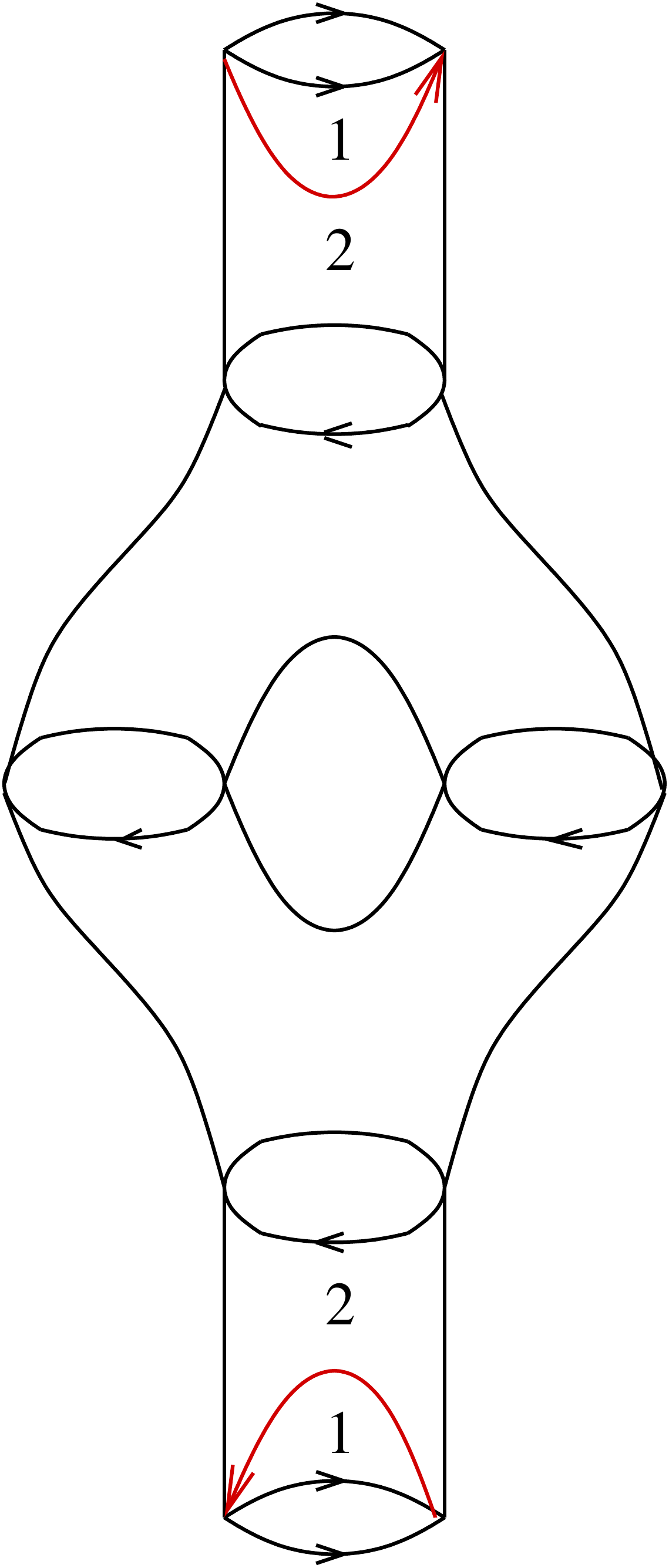}}\quad \cong \quad  \raisebox{-30pt}{\includegraphics[height=1in]{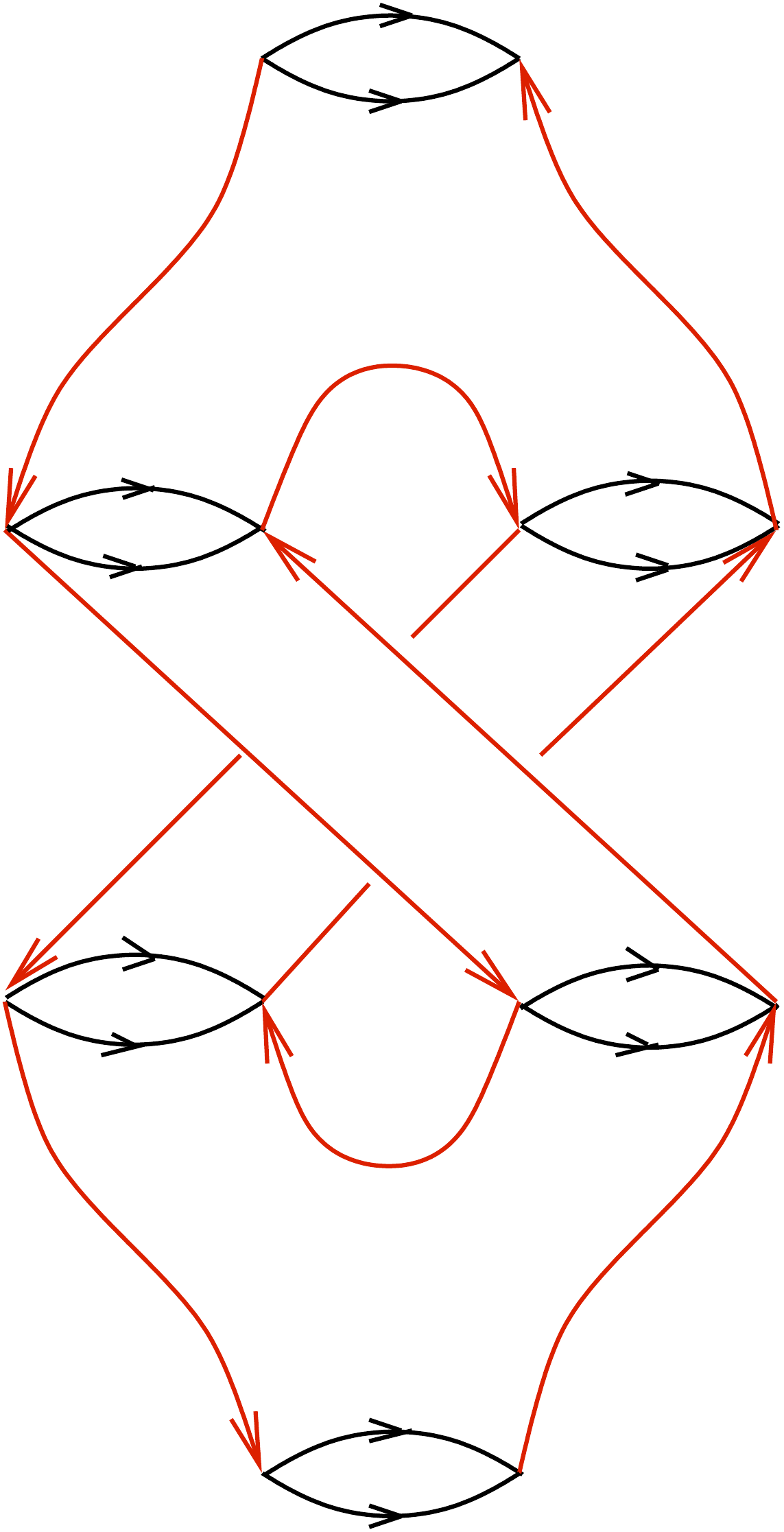}} \quad \cong \quad \raisebox{-30pt}{\includegraphics[width = 0.5in, height=1in]{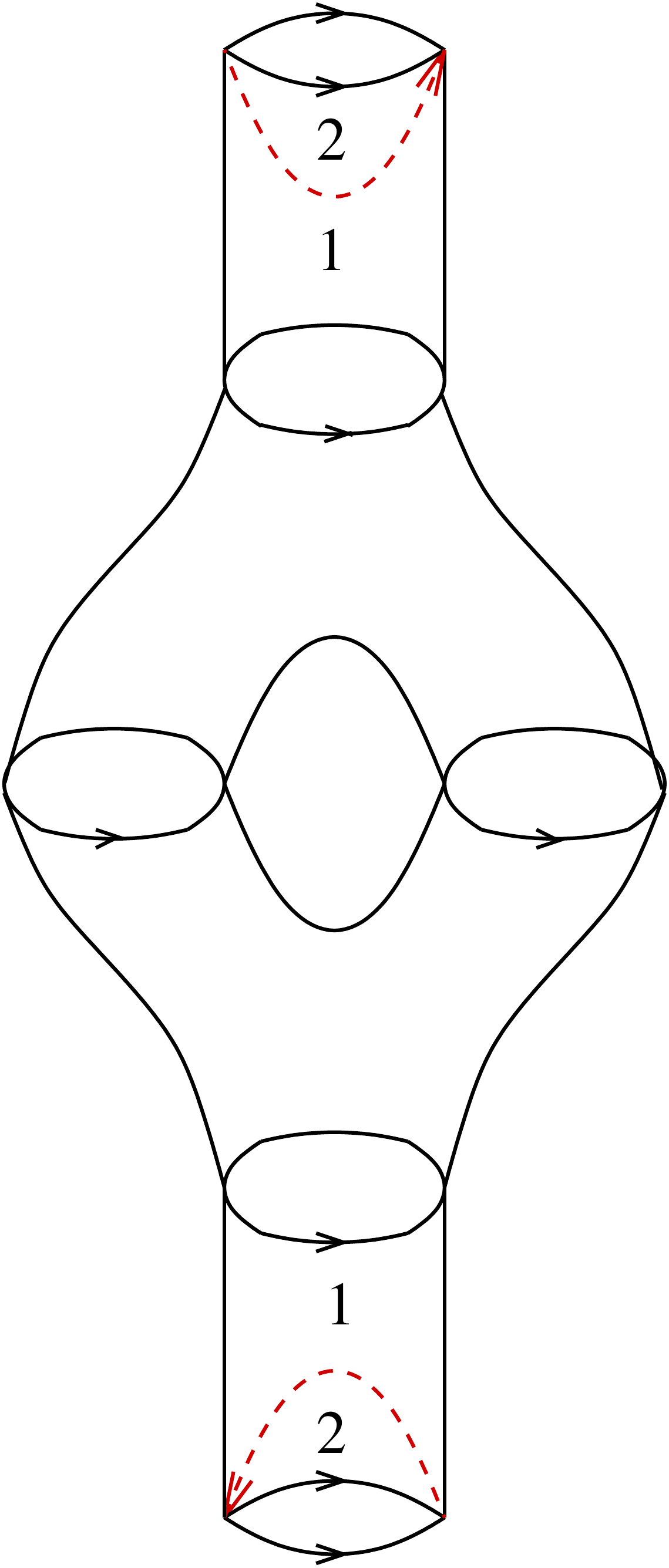}} 
\end{equation*}
\end{enumerate}
\end{proposition}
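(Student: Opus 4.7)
The plan is to verify each of the six families of relations by exhibiting an explicit orientation-preserving diffeomorphism between the two singular 2-cobordisms that restricts to the identity on the boundary. Since our morphisms are considered up to precisely such equivalence, no further abstract argument is needed once the topology is matched. The guiding principle is that a connected singular 2-cobordism (with prescribed boundary webs and oriented circles) is determined up to boundary-fixing diffeomorphism by the underlying topological surface together with the isotopy class of its singular arcs and circles rel boundary; thus each claimed identity reduces to checking that both sides have the same underlying genus, the same boundary (including orientations and bivalent-vertex data), and equivalent configurations of singular strata.

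First, for parts (1) and (2), I would treat the oriented circles and the bi-web separately. For $0_-$ and $0_+$, the statements are exactly the classical commutative Frobenius algebra axioms in $\textbf{2Cob}$, verified by the standard surface pictures (pants, cups, caps), restricted to the corresponding orientation convention; nothing singular appears, so these reduce to Abrams' classical result. For the bi-web $1$, the generating morphisms in~\eqref{eg:generators_web} each carry a single singular arc, and the composites involved in associativity, coassociativity, unit, counit, Frobenius and symmetry axioms assemble into surfaces decorated by a single connected graph of singular arcs; I would check that in both sides of each axiom this graph is the same tree/loop up to isotopy. The symmetry relation $\epsilon_W \circ m_W \circ \tau = \epsilon_W \circ m_W$ requires checking that braiding two bi-web strands followed by a singular pair-of-pants and counit gives a diffeomorphic cobordism to the unbraided version, which follows because the resulting surface is a sphere with three bi-web boundary discs removed and a tree of singular arcs, and the braid extends to a diffeomorphism of that surface.

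Next, for parts (3)--(6), the main new ingredient is the interaction between the zipper/cozipper cobordisms in~\eqref{eg:generators_web_circle_one} and the generators in~\eqref{eg:generators_circle} and~\eqref{eg:generators_web}. For (3), the algebra-homomorphism identity for each zipper reduces to exhibiting a diffeomorphism between (zipper $\amalg$ zipper) $\circ$ (circle multiplication) and (bi-web multiplication) $\circ$ zipper; both are connected cobordisms of the same genus with the same boundary, each carrying a singular arc running from the two incoming bi-web vertex sites and merging along a single arc down to the output, so an isotopy rel boundary straightens one into the other. For (4), duality of cozipper and zipper is the statement that the snake-like composite equals the analogous composite with zipper and cozipper interchanged and the boundary circle replaced by a bi-web; this is verified by recognizing both sides as a cylinder connecting a circle to a bi-web carrying a single singular arc, i.e.\ as the generators in~\eqref{eq:morphisms f and g}. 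For (5), the centrality relations follow from the fact that a zipper's singular arc lies in the interior of the ``bi-web side'', so sliding a bi-web cylinder past it by a diffeomorphism of the pair-of-pants realizes the claimed twist. Finally for (6), each genus-one composite is a closed--up surgery of genus one with the singular arcs arranged to produce the same closed singular 2-cobordism with a single handle; I would compare the three sides by noting they all have Euler characteristic $-2$ and the same singular graph (a single circle), and then invoke classification of such decorated surfaces rel boundary.

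The main obstacle throughout is bookkeeping for the singular strata: the diffeomorphisms must preserve the orientation on each facet, the labeling ``preferred/non-preferred'' at each bivalent vertex, and the induced ordering on facets incident to singular arcs. In particular, the two versions of the zipper (the unprimed and primed/``second'' generators) differ precisely by swapping which facet is preferred, and this distinction propagates into separate copies of every relation in (3)--(6). My plan to handle this cleanly is to fix, once and for all, a local model for a neighborhood of each singular arc (a book of two half-planes with a prescribed ordering), to build the generators as standard pieces attached to these local models, and then to argue that the composites in question differ by diffeomorphisms supported away from these neighborhoods, so the preferred-facet data is automatically preserved. Once this framework is in place, each individual diffeomorphism is routine, and the proposition follows exactly as in~\cite[Proposition 3]{CC3}, extended to include the newly admitted counterclockwise circles $0_+$.
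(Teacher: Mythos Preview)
Your approach is correct and matches the paper's: the paper does not give an independent argument but simply says the relations ``can be verified as in the proof of~\cite[Proposition 3]{CC3}'', and your plan---check that both sides of each relation are surfaces of the same genus with identical boundary data and isotopic singular loci, then invoke classification rel boundary---is precisely that verification, carried out with the extra bookkeeping for the new $0_+$ generators.

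Two small descriptive slips, neither fatal to the strategy. In part~(4), the duality cobordisms are morphisms $0_\mp \amalg 1 \to \emptyset$, i.e.\ a disc with two boundary components (one circle, one bi-web) and a single singular arc; they are not the composites of~\eqref{eq:morphisms f and g}, which are morphisms $0_- \to 0_+$ and $0_+ \to 0_-$. In part~(6), the singular locus on the left-hand side consists of two arcs (one from the cozipper, one from the zipper) rather than a single circle; on the right-hand side the singular graph comes from the bi-web comultiplication and multiplication. You will still get a match, but the combinatorics of the singular strata is a bit more involved than ``a single circle''. With these corrections your local-model argument goes through unchanged.
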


As a result of the above relations, the following holds: 

\begin{proposition}\label{prop:structure of Sing-2Cob}
The category $\textbf{Sing-2Cob}$ of singular 2-cobordisms is equivalent to the symmetric monoidal category freely generated by an enhanced twin Frobenius algebra.
 \end{proposition}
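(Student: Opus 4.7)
The plan is to establish the equivalence by exhibiting a symmetric monoidal functor $F$ from the free symmetric monoidal category $\mathcal{F}(\mathbf{eT})$ on an enhanced twin Frobenius algebra to $\textbf{Sing-2Cob}$, and then to check essential surjectivity and full faithfulness. On generators, $F$ sends the formal object ``$C$'' of $\mathcal{F}(\mathbf{eT})$ to the circle $0_{-}$ (with a parallel copy sent to $0_{+}$, which is needed because the enhanced category keeps both orientations), the object ``$W$'' to the bi-web $1$, and the generating morphisms $m_C,\iota_C,\Delta_C,\epsilon_C$, $m_W,\iota_W,\Delta_W,\epsilon_W$, $z_1,z_1^*,z_2,z_2^*$ to the cobordisms displayed in \eqref{eg:generators_circle}, \eqref{eg:generators_web}, \eqref{eg:generators_web_circle_one} and their orientation-reversed analogues. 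Proposition~\ref{prop:relations} is precisely the statement that the defining axioms of an enhanced twin Frobenius algebra hold among the images, so $F$ is well-defined as a symmetric monoidal functor. Essential surjectivity is immediate from the definition of objects in $\textbf{Sing-2Cob}$ as sequences in $\{0_+,0_-,1\}$, which correspond bijectively to tensor words of the two ``$C$'' copies and $W$ in $\mathcal{F}(\mathbf{eT})$.

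For fullness I would decompose an arbitrary morphism $\Sigma\co \mathbf{n}\to\mathbf{m}$ of $\textbf{Sing-2Cob}$ by choosing a generic Morse function $h\co \Sigma\to[0,1]$ whose restriction to each smooth stratum is Morse and whose restriction to the union of singular arcs has only isolated critical values, none of them coinciding with a critical value of the smooth strata. Between consecutive critical values $\Sigma$ is a cylinder over a web, and near each critical point the local model of $\Sigma$ is (up to diffeomorphism) one of the generators listed in \eqref{eg:generators_circle}--\eqref{eg:generators_web_circle_one} or an orientation-reversed version. Hence $\Sigma$ is a composition of tensor products of images of the generators, i.e.\ it lies in the image of $F$.

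Faithfulness is the main obstacle. Two Morse decompositions of the same cobordism are connected by a sequence of moves in a singular enhancement of Cerf theory: birth/death of cancelling smooth critical pairs, handle slides among smooth handles, and moves that interchange a smooth critical point with an endpoint of a singular arc or with a critical point of $h$ restricted to the singular graph. I would verify move-by-move that each elementary change can be implemented using the relations in parts (1)--(6) of Proposition~\ref{prop:relations}: the purely smooth moves are handled by the Frobenius-algebra presentation of $\textbf{2Cob}$ applied separately to the $0_+$ and $0_-$ strata (part (1)); moves that cross a zipper or cozipper are controlled by the algebra-homomorphism axioms (part (3)) and their duals (part (4)); moves that drag a smooth handle past a zipper use the centrality relations (part (5)); and moves that change the genus of a ``mixed'' component are absorbed by the genus-one relations (part (6)). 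This is exactly the strategy used in~\cite{CC3} for the smaller category.

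The principal technical hurdle will therefore be checking that the passage to both circle orientations and two pairs of zippers $(z_1,z_1^*)$, $(z_2,z_2^*)$ produces no genuinely new Cerf-theoretic move. I expect this to reduce to two observations: first, that applying an orientation-reversing self-diffeomorphism to each relation of~\cite{CC3} yields the corresponding relation in the $0_+$ sector, so the two Frobenius structures on $C$ are treated on equal footing; and second, that the $(z_1,z_1^*)$ and $(z_2,z_2^*)$ pairs do not interact directly in $\textbf{Sing-2Cob}$ beyond the relations already listed, because their respective singular arcs meet only through the symmetric Frobenius structure on $W$. Once these two points are dispatched, the argument from~\cite{CC3} extends verbatim, $F$ is fully faithful, and the proposition follows.
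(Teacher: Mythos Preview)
The paper does not actually give a proof of this proposition: it is stated as an immediate consequence of Proposition~\ref{prop:relations} together with the results of~\cite{CC3}, and no further argument is supplied. Your outline---define the functor on generators, invoke Proposition~\ref{prop:relations} for well-definedness, prove fullness via a stratified Morse decomposition, and prove faithfulness via a singular Cerf-type analysis---is precisely the strategy carried out in~\cite{CC3} for the smaller category, and is therefore exactly the argument the paper is implicitly appealing to. In that sense your proposal is consistent with, and more detailed than, what the paper offers.

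One point deserves sharpening. You write that $F$ sends ``$C$'' to $0_-$ ``with a parallel copy sent to $0_+$,'' but the free category $\mathcal{F}(\mathbf{eT})$ as defined in the paper has only \emph{one} generating object $C$, whereas $\textbf{Sing-2Cob}$ has two non-isomorphic circle objects $0_+$ and $0_-$ (the composites $f,g$ of~\eqref{eq:morphisms f and g} are not inverse isomorphisms until one passes to $\textbf{eSing-2Cob}$). So a functor out of $\mathcal{F}(\mathbf{eT})$ cannot hit both $0_+$ and $0_-$ on the nose, and your essential-surjectivity claim as written does not go through. This is really an imprecision in the paper's formulation rather than in your method: the intended ``enhanced'' structure must carry \emph{two} commutative Frobenius objects (one for each circle orientation), with $z_1,z_1^*$ attached to one and $z_2,z_2^*$ to the other, as is clear from the generator list preceding Proposition~\ref{prop:relations} and from part~(1) of that proposition. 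If you make this correction explicit, your argument goes through and matches the paper's intent.
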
 


\subsection{The category $\textbf{eSing-2Cob}$}

We extend the set of morphisms in $\textbf{Sing-2Cob}$ to allow formal linear combinations of the original morphisms---with coefficients in the ground field or ring (recall that the category $\mathbf{C}$ is either $\textbf{Vect}_k$ or $\textbf{R-Mod}$, where $k$ is a field and $R$ a commutative ring)---and to extend the composition maps in the natural bilinear way. For the remaining of the paper, we require that the ground field/ring contains the fourth root of unity $i.$

We mod out the set of morphisms of $\textbf{Sing-2Cob}$ by the following local relations and denote the resulting category by $\textbf{eSing-2Cob}$:
 \begin{equation} \label{imposed-relation1}
  \psset{xunit=.22cm,yunit=.22cm}
 \begin{pspicture}(3,5)
  \rput(1, 2.6){\includegraphics[height=0.4in]{cozipper.pdf}}
  \rput(1, -0.8){\includegraphics[height=0.4in]{zipper.pdf}}
   \end{pspicture} 
    = -i\,  \raisebox{-10pt}{\includegraphics[height=0.4in]{identity_web.pdf}} \hspace{2cm}
 \psset{xunit=.22cm,yunit=.22cm}
 \begin{pspicture}(3,5)
  \rput(1, 2.9){\includegraphics[height=0.4in]{zipper.pdf}}
  \rput(1, -0.8){\includegraphics[height=0.4in]{cozipper.pdf}}
   \end{pspicture}
 = -i \, \raisebox{-10pt}{\includegraphics[height=0.4in]{identity_circle.pdf}}
  \end{equation}
  
   \begin{equation} \label{imposed-relation2}
  \psset{xunit=.22cm,yunit=.22cm}
 \begin{pspicture}(3,5)
  \rput(1, 2.6){\includegraphics[height=0.4in]{cozipper_second.pdf}}
  \rput(1, -0.8){\includegraphics[height=0.4in]{zipper_second.pdf}}
   \end{pspicture} 
  = i\,  \raisebox{-10pt}{\includegraphics[height=0.4in]{identity_web.pdf}} \hspace{2cm}
 \psset{xunit=.22cm,yunit=.22cm}
 \begin{pspicture}(3,5)
  \rput(1, 2.9){\includegraphics[height=0.4in]{zipper_second.pdf}}
  \rput(1, -0.8){\includegraphics[height=0.4in]{cozipper_second.pdf}}
   \end{pspicture}
 = i \, \raisebox{-10pt}{\includegraphics[height=0.4in]{identity_circle_sec.pdf}}
 \vspace{0.3cm}
  \end{equation}
  
 A quick inspection reveals that relations~\eqref{imposed-relation1}-\eqref{imposed-relation2} are particular cases of the ``curtain identities" (CI-1)-(CI-2). The reason for imposing the above relations is motivated by our goal of constructing a TQFT-approach cohomology for links isomorphic to the (tautological) $sl(2)$ foam cohomology, thus this new theory must satisfy the local relations of the $sl(2)$ foam cohomology. This reason will become more transparent in Section~\ref{sec:link cohomology}, via the `resolution-simplification step'.
 
\begin{remark}\label{rem1}
Considered as an element in the category $\textbf{eSing-2Cob}$, the bi-web forms a commutative Frobenius algebra:
  
     \begin{equation*}
  \psset{xunit=.22cm,yunit=.22cm}
 \begin{pspicture}(5,9)
  \rput(3, 8.3){\includegraphics[height=0.4in]{identity_web.pdf}}
 \rput(-1, 8.3){\includegraphics[height=0.4in]{identity_web.pdf}}
  \rput(1, 4.6){\includegraphics[height=0.4in]{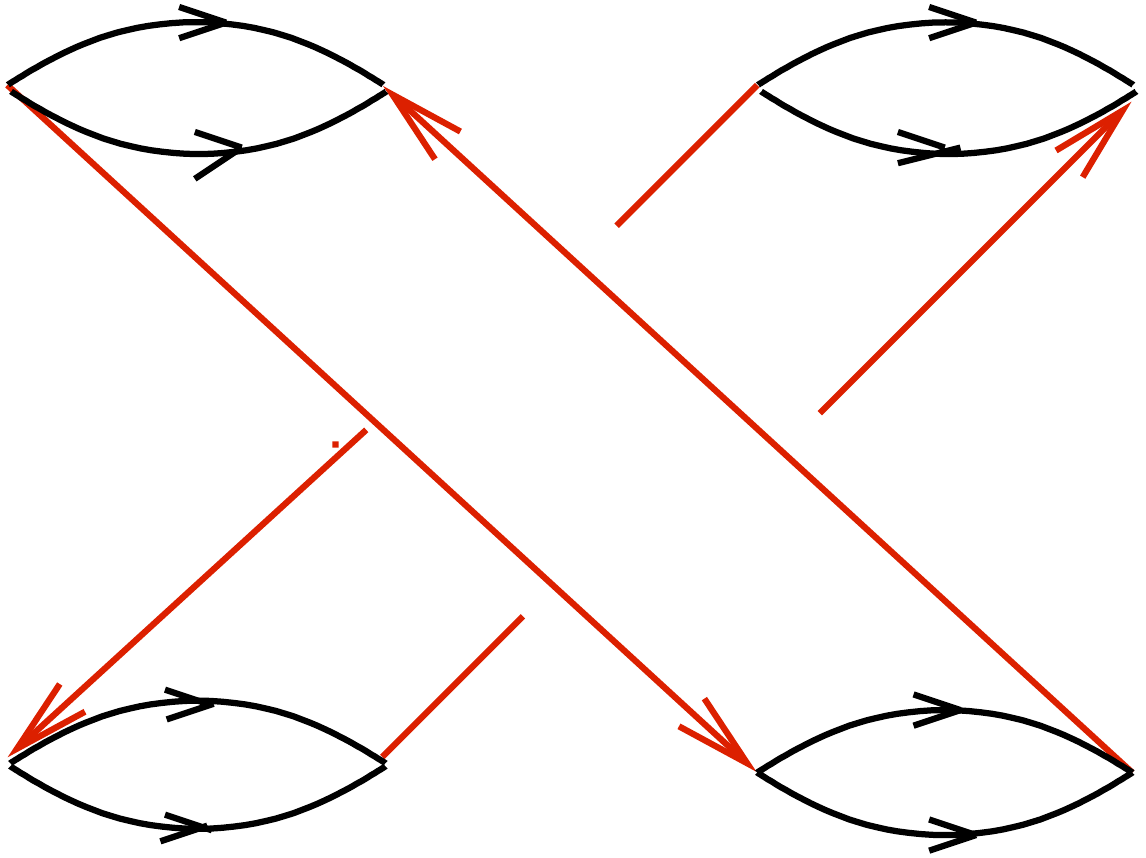}}
  \rput(1, 1){\includegraphics[height=0.4in]{singmult.pdf}}
   \end{pspicture} 
   \stackrel{\eqref{imposed-relation1}}{=} i^2 \,\,
    \begin{pspicture}(7,10)
  \rput(5, 8.3){\includegraphics[height=0.4in]{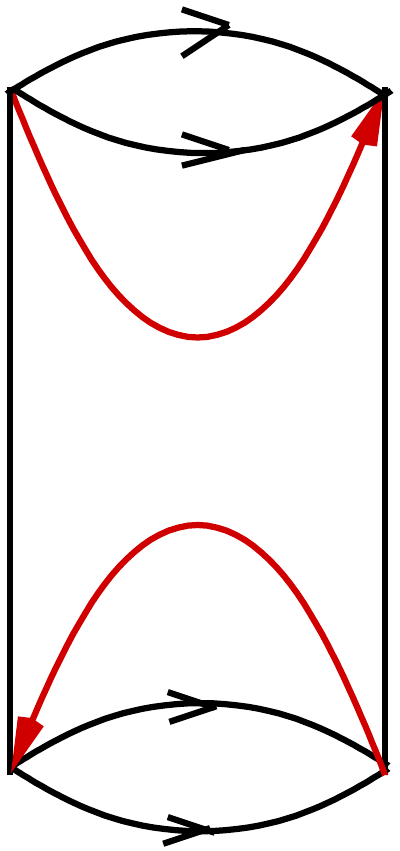}}
 \rput(1, 8.3){\includegraphics[height=0.4in]{cozipper_zipper.pdf}}
  \rput(3, 4.6){\includegraphics[height=0.4in]{braiding_WW.pdf}}
  \rput(3, 1){\includegraphics[height=0.4in]{singmult.pdf}}
   \end{pspicture} 
   \cong i^2\,\,
     \begin{pspicture}(7,10)
  \rput(5, 8.3){\includegraphics[height=0.4in]{cozipper.pdf}}
 \rput(1, 8.3){\includegraphics[height=0.4in]{cozipper.pdf}}
  \rput(3, 4.8){\includegraphics[height=0.4in]{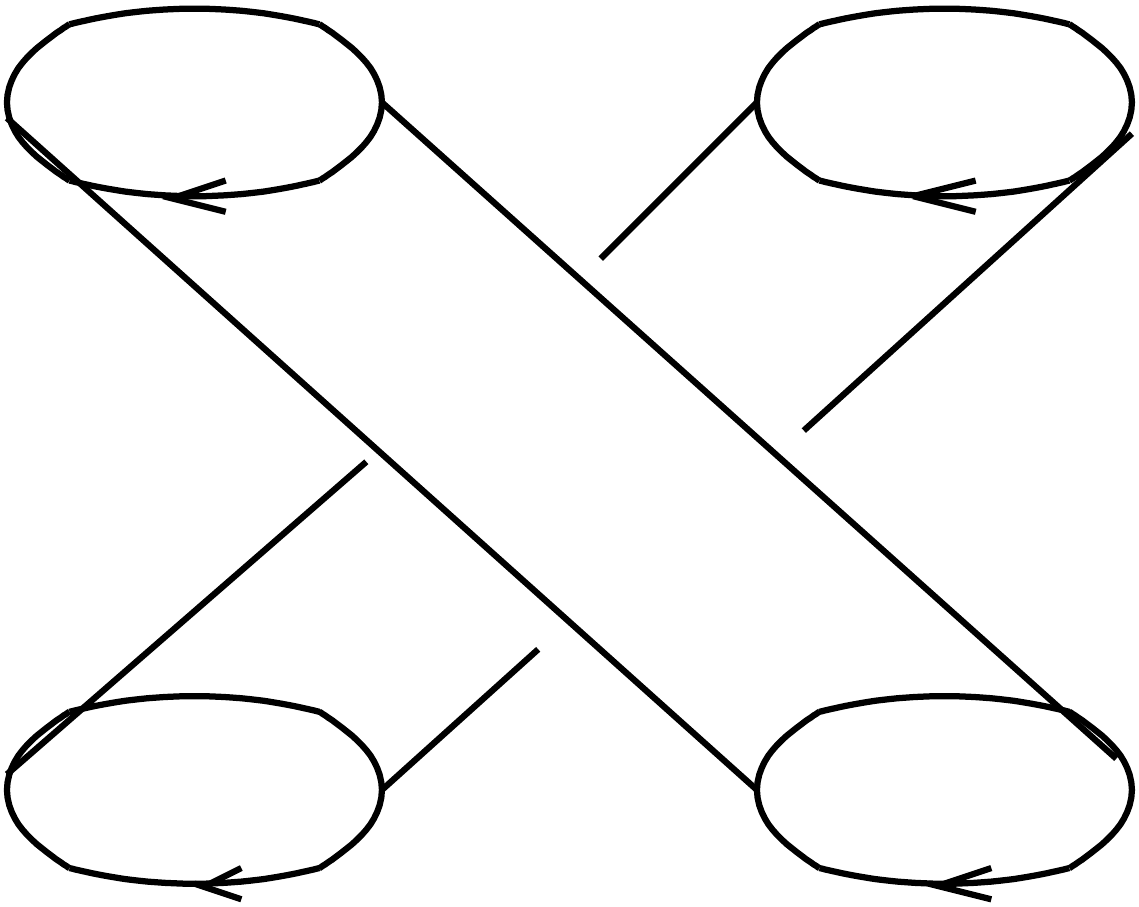}}
    \rput(5, 1.3){\includegraphics[height=0.4in]{zipper.pdf}}
      \rput(1, 1.3){\includegraphics[height=0.4in]{zipper.pdf}}
  \rput(3, -2.4){\includegraphics[height=0.4in]{singmult.pdf}}
   \end{pspicture} 
     \cong i^2\,\,
     \begin{pspicture}(7,10)
  \rput(5, 4.7){\includegraphics[height=0.4in]{cozipper_zipper.pdf}}
 \rput(1, 4.7){\includegraphics[height=0.4in]{cozipper_zipper.pdf}}
  \rput(3, 1){\includegraphics[height=0.4in]{singmult.pdf}}
   \end{pspicture} 
     \stackrel{\eqref{imposed-relation1}}{=} 
     \begin{pspicture}(7,10)
  \rput(5, 4.7){\includegraphics[height=0.4in]{identity_web.pdf}}
   \rput(1, 4.7){\includegraphics[height=0.4in]{identity_web.pdf}}
  \rput(3, 1){\includegraphics[height=0.4in]{singmult.pdf}}
   \end{pspicture} 
    \end{equation*}
    \vspace{0.5cm}
 
Moreover, the ``centrality relations" are implied by relations~\eqref{imposed-relation1}-\eqref{imposed-relation2} and hold with equality in $\textbf{eSing-2Cob}$. Furthermore, the ``genus-one relations" take the following form in the new category $\textbf{eSing-2Cob}$:

\begin{equation*}
-\,\,\raisebox{-30pt}{\includegraphics[width = 0.5in, height=1in]{genus1_rel1.pdf}}\quad = \quad  \raisebox{-30pt}{\includegraphics[height=1in]{genus1_rel_sing.pdf}} \quad = \quad -\,\, \raisebox{-30pt}{\includegraphics[width = 0.5in, height=1in]{genus1_rel2.pdf}} 
\end{equation*}
\end{remark}

Noticing the minus signs in the above equations, we recall that the twin TQFT that we mentioned about in the introduction and given in~\cite[Example 1]{CC3} failed to satisfy the ``genus-one condition" of a twin Frobenius algebra because of a minus sign.

\begin{remark}\label{rem2}
The imposed relations~\eqref{imposed-relation1}-\eqref{imposed-relation2} imply that in the category $\textbf{eSing-2Cob}$,  the zippers \,$\raisebox{-10pt}{\includegraphics[height=0.4in]{zipper.pdf}}$\, and \,$\raisebox{-10pt}{\includegraphics[height=0.4in]{zipper_second.pdf}}$\, are isomorphisms with inverses \,$i \,\raisebox{-10pt}{\includegraphics[height=0.4in]{cozipper.pdf}}$\, and $-i \,\raisebox{-10pt}{\includegraphics[height=0.4in]{cozipper_second.pdf}}$\,, respectively (compare with the isomorphisms~\eqref{isomorphisms}).   
It follows that the cobordisms \,$ \raisebox{-13pt}{\includegraphics[height=0.4in]{morphism_f.pdf}}$\, and \,$\raisebox{-13pt}{\includegraphics[height=0.4in]{morphism_g.pdf}} $\, are mutually inverse isomorphisms:
\begin{equation*}
 \psset{xunit=.22cm,yunit=.22cm}
 \begin{pspicture}(1.5,5)
  \rput(0,2.5){\includegraphics[height=0.4in]{morphism_f.pdf}}
  \rput(0, -1){\includegraphics[height=0.4in]{morphism_g.pdf}}
\end{pspicture} = \raisebox{-10pt}{\includegraphics[height=0.4in]{identity_circle.pdf}}
\hspace{2cm}
 \psset{xunit=.22cm,yunit=.22cm}
 \begin{pspicture}(1.5,5)
  \rput(0,2.5){\includegraphics[height=0.4in]{morphism_g.pdf}}
  \rput(0, -1){\includegraphics[height=0.4in]{morphism_f.pdf}}
\end{pspicture} = \raisebox{-10pt}{\includegraphics[height=0.4in]{identity_circle_sec.pdf}}
\end{equation*}
\vspace{0.1cm}
\end{remark}

Proposition~\ref{prop:structure of Sing-2Cob} reveals the algebraic structure of the category $\textbf{Sing-2Cob}$. Then a natural question arises:

\begin{question}
What is the algebraic description of the new category $\textbf{eSing-2Cob}$?
\end{question}

 The answer is given by the above two remarks, along with Proposition~\ref{prop:relations}. First we observe that in the category $\textbf{eSing-2Cob}$, not only the circle but also the bi-web forms a commutative Frobenius algebra, and that $z_1$ and $iz_1^*$ must be mutually inverse isomorphisms, as well as $z_2$ and $-iz_2^*$. That is:
\begin{equation}\label{eq:quotient algebra}
 (iz_1^*) \circ z_1 = \id_C, \,\, z_1 \circ (iz_1^*) = \id_W \,\, \text{and} \,\, (-iz_2^*) \circ z_2 = \id_C, \,\, z_2 \circ (-iz_2^*) = \id_W. 
 \end{equation} 

In particular, we have that $z_1$ and $z_2$ are algebra isomorphisms, while $z_1^*$ and $z_2^*$ are coalgebra isomorphisms such that $f = z_2^* \circ z_1$ and $g = z_1^* \circ z_2$ are mutually inverse isomorphisms of algebra objects in $\mathbf{C}$.

Second, the ``centrality condition" $m_W \circ (\id_W \otimes z_{k}) = m_W \circ \tau_{W, W} \circ (\id_W \otimes z_{k})$ and the ``genus-one condition" $z_k \circ m_C \circ \Delta_C \circ z_k^* = m_W \circ \tau_{W,W} \circ \Delta_W$ that an enhanced twin Frobenius algebra is required to satisfy need not appear in the definition of the new algebra, as they are implied by the imposed relations~\eqref{eq:quotient algebra} (actually the genus-one condition `holds' with a minus sign in the new algebra; i.e. the new algebra satisfies $- z_k \circ m_C \circ \Delta_C \circ z_k^* = m_W \circ \tau_{W,W} \circ \Delta_W$, for each $k = 1,2$.)

We put these together and give a formal definition/description of the algebraic structure governing the category $\textbf{eSing-2Cob}$.

\begin{definition} \label{def:new algebra}
An \textit{ identical twin Frobenius algebra} $\textbf{iT} := (C, W, z_1, z_1^*,z_2, z_2^*)$ in $\textbf{C}$ consists of two commutative Frobenius algebras 
\[C = (C, m_C, \iota_C, \Delta_C, \epsilon_C)\,\, \text{and} \,\ W = (W, m_W, \iota_W, \Delta_W, \epsilon_W)\] and four morphisms $z_1, z_2 \co C \to W$ and $z_1^*, z_2^* \co W \to C$, such that $z_1, z_2$ are isomorphisms of algebra objects in $\textbf{C}$ with dual (coalgebra) isomorphisms $z_1^*, z_2^*$, respectively, and inverse isomorphisms $iz_1^*, -iz_2^*$, respectively.
\end{definition}

A homomorphism of identical twin Frobenius algebras and the tensor product of two identical twin Frobenius algebras are defined similarly as their analogues corresponding to enhanced twin Frobenius algebras (see Definition~\ref{def:homs in eT-Frob}).

\begin{definition}
An \textit{identical twin TQFT} in \textbf{C} is a symmetric monoidal functor $\textbf{eSing-2Cob} \to \mathbf{C}$. A \textit{homomorphism of identical twin TQFTs} is a monoidal natural transformation of such functors.
\end{definition}

Given an identical twin TQFT, call it $\mathcal{T}$, there is an associated identical twin Frobenius algebra $\textbf{iT} = (C, W, z_1, z_1^*, z_2, z_2^*)$ such that $\mathcal{T}(\emptyset) = \textbf{1}$ and $\mathcal{T}(0_{-}) = C = \mathcal{T}(0_{+}) , \mathcal{T}(1) = W$. The structure maps of the algebra $\textbf{iT}$ are the images under $\mathcal{T}$ of the generators of \textbf{eSing-2Cob}, as explained in Figure~\ref{fig:TQFT}.
 
 \begin{figure}[ht]
\begin{equation*} \mathcal{T} \co\raisebox{-13pt}{\includegraphics[height=0.4in]{comult.pdf}} \to \Delta_C, \quad \mathcal{T} \co \raisebox{-13pt}{\includegraphics[height=0.4in]{mult.pdf}} \to m_C, \quad \mathcal{T} \co \raisebox{-5pt}{\includegraphics[height=.2in]{unit.pdf}} \to \iota_C,  \quad \mathcal{T} \co \raisebox{-5pt}{\includegraphics[height=0.2in]{counit.pdf}} \to \epsilon_C \label{eq:generators_circle} 
\end{equation*}
\begin{equation*} \mathcal{T} \co\raisebox{-13pt}{\includegraphics[height=0.4in]{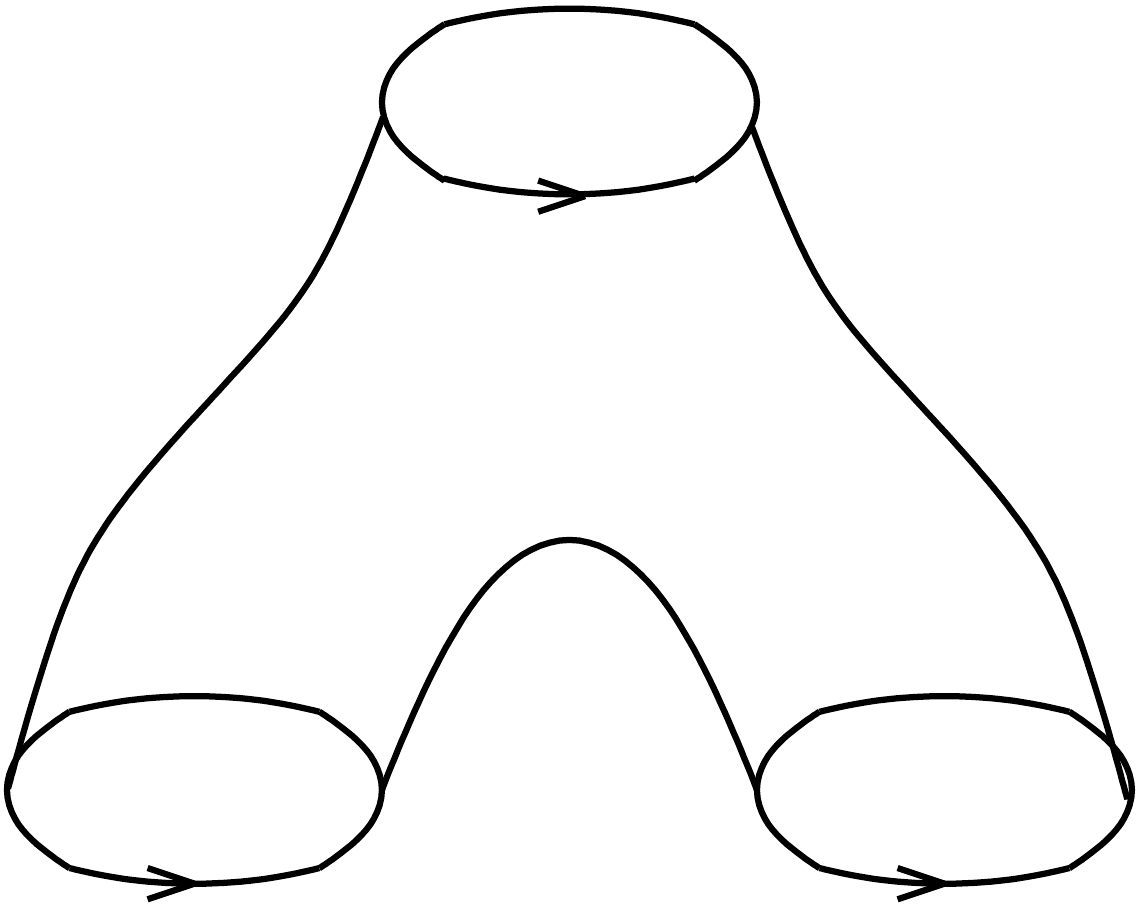}} \to \Delta_C, \quad \mathcal{T} \co \raisebox{-13pt}{\includegraphics[height=0.4in]{mult_second.pdf}} \to m_C, \quad \mathcal{T} \co \raisebox{-5pt}{\includegraphics[height=.2in]{unit_second.pdf}} \to \iota_C,  \quad \mathcal{T} \co \raisebox{-5pt}{\includegraphics[height=0.2in]{counit_second.pdf}} \to \epsilon_C \label{eq:generators_circle} 
\end{equation*}
\begin{equation*} \mathcal{T} \co \raisebox{-13pt}{\includegraphics[height=0.4in]{singcomult.pdf}} \to \Delta_W, \quad  \mathcal{T} \co \raisebox{-13pt}{\includegraphics[height=0.4in]{singmult.pdf}} \to m_W, \quad \mathcal{T} \co \raisebox{-5pt}{\includegraphics[height=.2in]{singunit.pdf}} \to \iota_W,  \quad \mathcal{T} \co \raisebox{-5pt}{\includegraphics[height=0.2in]{singcounit.pdf}} \to \epsilon_W \label{eq:generators_web}
\end{equation*}
\begin{equation*}
\mathcal{T} \co\raisebox{-13pt}{\includegraphics[height=0.4in]{zipper.pdf}} \to z_1, \quad \mathcal{T} \co\raisebox{-13pt}{\includegraphics[height=0.4in]{cozipper.pdf}} \to z_1 ^*, \quad
\mathcal{T} \co\raisebox{-13pt}{\includegraphics[height=0.4in]{zipper_second.pdf}} \to z_2, \quad \mathcal{T} \co\raisebox{-13pt}{\includegraphics[height=0.4in]{cozipper_second.pdf}} \to z_2 ^* .
\end{equation*} 
\caption{The assignments of $\mathcal{T}$ on generators} \label{fig:TQFT} 
\end{figure}

\begin{proposition}\label{prop:structure of eSing-2Cob}
The category $\textbf{eSing-2Cob}$ is equivalent to the symmetric monoidal category freely generated by an identical twin Frobenius algebra.
 \end{proposition}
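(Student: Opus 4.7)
The plan is to mirror the strategy used for Proposition~\ref{prop:structure of Sing-2Cob}, establishing a symmetric monoidal equivalence between $\textbf{eSing-2Cob}$ and the free symmetric monoidal category generated by an identical twin Frobenius algebra. First I would verify that the canonical geometric data of $\textbf{eSing-2Cob}$ assembles into an identical twin Frobenius algebra in the sense of Definition~\ref{def:new algebra}: assign $C \mapsto 0_{\pm}$, $W \mapsto 1$, with structure maps drawn from~\eqref{eg:generators_circle}--\eqref{eg:generators_web_circle_one}. The commutative Frobenius structure on $C$, the symmetric Frobenius structure on $W$, and the facts that each $z_k$ is an algebra homomorphism with dual $z_k^*$ are inherited via Proposition~\ref{prop:relations} and Proposition~\ref{prop:structure of Sing-2Cob}. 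Remark~\ref{rem1} upgrades $W$ to a \emph{commutative} Frobenius algebra, and Remark~\ref{rem2} supplies the required inverses $z_1^{-1} = i\, z_1^*$ and $z_2^{-1} = -i\, z_2^*$. Together these verify every axiom of Definition~\ref{def:new algebra}.

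Next I would establish the universal property. Given an identical twin Frobenius algebra $\textbf{iT}$ in any symmetric monoidal category $\mathbf{C}$, define a symmetric monoidal functor $F_{\textbf{iT}} \co \textbf{eSing-2Cob} \to \mathbf{C}$ by the assignments of Figure~\ref{fig:TQFT} on generators. Well-definedness splits into: (a) respecting the diffeomorphism relations inherited from $\textbf{Sing-2Cob}$, which by Proposition~\ref{prop:structure of Sing-2Cob} are exactly the axioms of an enhanced twin Frobenius algebra; and (b) respecting the imposed relations~\eqref{imposed-relation1}--\eqref{imposed-relation2}, which is immediate from the isomorphism prescription built into Definition~\ref{def:new algebra}. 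For (a) the two Frobenius algebra, algebra-homomorphism, and duality axioms transfer without change; the centrality condition follows from commutativity of $m_W$; and the only subtle point is the enhanced twin Frobenius algebra genus-one axiom $z_k \circ m_C \circ \Delta_C \circ z_k^* = m_W \circ \tau_{W,W} \circ \Delta_W$, which must be derived inside a general identical twin Frobenius algebra by substituting $z_k^* = \mp i\, z_k^{-1}$ and using that $z_k$ is an algebra homomorphism together with commutativity of $W$ --- the algebraic mirror of the computation indicated in Remark~\ref{rem1}.

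Uniqueness of $F_{\textbf{iT}}$ up to monoidal natural isomorphism is automatic since the generators~\eqref{eg:generators_circle}--\eqref{eg:generators_web_circle_one} exhaust the morphisms of $\textbf{eSing-2Cob}$ under composition and monoidal product. To close the loop, I would apply $F$ to the canonical identical twin Frobenius algebra produced in the first step (with $\mathbf{C} = \textbf{eSing-2Cob}$) to obtain a functor naturally isomorphic to $\id_{\textbf{eSing-2Cob}}$, and apply $F$ to the generic identical twin Frobenius algebra living inside the free category to obtain the inverse equivalence. Together these give $\textbf{eSing-2Cob}$ equivalent as a symmetric monoidal category to the symmetric monoidal category freely generated by an identical twin Frobenius algebra.

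The hard part will be the genus-one verification in step (a). Unlike the other enhanced twin Frobenius algebra axioms, it is not automatic for an arbitrary identical twin Frobenius algebra: the algebraic identity carries precisely the sign discrepancy flagged in Remark~\ref{rem1}, so making the argument airtight requires showing that both sides of the axiom reduce to the same vanishing morphism within any identical twin Frobenius algebra by playing the $k=1$ and $k=2$ invertibility relations against each other. Once this step is in place, the remainder of the proof is bookkeeping that parallels Proposition~\ref{prop:structure of Sing-2Cob} verbatim.
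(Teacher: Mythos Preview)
Your overall architecture matches the paper's own proof, which simply defers to \cite[Theorem 3]{CC3}; you go considerably further by spelling out the argument, and you correctly single out the genus-one condition as the only nontrivial point. However, your proposed resolution of that point is wrong. You claim that in an arbitrary identical twin Frobenius algebra both sides of the genus-one axiom reduce to the same \emph{vanishing} morphism; but the concrete TQFT of Section~\ref{sec:TQFT} has $(m_W\circ\Delta_W)(1)=i(2X-h)\neq 0$, so this cannot hold. In fact, substituting $z_k\circ z_k^*=\mp i\,\id_W$ and using that $z_k$ is an algebra homomorphism while $z_k^*$ is a coalgebra homomorphism gives
\[
z_k\circ m_C\circ\Delta_C\circ z_k^* \;=\; m_W\circ\bigl((z_k z_k^*)\otimes(z_k z_k^*)\bigr)\circ\Delta_W \;=\; (\mp i)^2\,m_W\circ\Delta_W \;=\; -\,m_W\circ\Delta_W
\]
in \emph{every} identical twin Frobenius algebra --- precisely the minus-sign identity of Remark~\ref{rem1}, not the plus-sign enhanced-twin axiom. ``Playing $k=1$ against $k=2$'' changes nothing, since $(-i)^2=(+i)^2=-1$ in both cases.

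Consequently, an identical twin Frobenius algebra need not satisfy the enhanced twin Frobenius algebra's genus-one axiom, and your step (a) --- invoking Proposition~\ref{prop:structure of Sing-2Cob} to conclude that $F_{\textbf{iT}}$ respects all diffeomorphism relations of $\textbf{Sing-2Cob}$ --- breaks down at exactly this relation. The way forward is not a vanishing argument but a direct check that the topological genus-one diffeomorphism of Proposition~\ref{prop:relations}(6), once expressed in the generators and combined with relations~\eqref{imposed-relation1}--\eqref{imposed-relation2}, is equivalent to the minus-sign form, which every identical twin Frobenius algebra does satisfy. The paper itself does not spell this step out, but that is the actual content to supply.
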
 
 
 \begin{proof}
 The proof is similar to that of Theorem 3 in~\cite{CC3}, and for the sake of brevity, we leave the details to the enthusiastic reader.
 \end{proof}
 
 \begin{corollary}
The category of identical twin Frobenius algebras in \textbf{C} is equivalent, as a symmetric monoidal category, to the category of identical twin TQFTs in \textbf{C}.
\end{corollary}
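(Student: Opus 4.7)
The plan is to deduce the corollary directly from Proposition~\ref{prop:structure of eSing-2Cob}, which states that $\textbf{eSing-2Cob}$ is the free symmetric monoidal category on an identical twin Frobenius algebra. Because this is a universal property, every structure and morphism on one side should correspond canonically to one on the other. So I will build functors in both directions between the category of identical twin Frobenius algebras in $\mathbf{C}$ and the category of identical twin TQFTs, and check that they are mutually inverse and respect the symmetric monoidal structure.

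First I would define a functor $\mathcal{E}$ from identical twin TQFTs to identical twin Frobenius algebras by extracting the algebraic data. Given $\mathcal{T}\co\textbf{eSing-2Cob}\to\mathbf{C}$, set $C:=\mathcal{T}(0_{-})=\mathcal{T}(0_{+})$ (these must agree, since the cylinder provides an isomorphism and we can identify via it) and $W:=\mathcal{T}(1)$, and take the structure maps $m_C,\iota_C,\Delta_C,\epsilon_C,m_W,\iota_W,\Delta_W,\epsilon_W,z_1,z_1^*,z_2,z_2^*$ to be the images under $\mathcal{T}$ of the generators as listed in Figure~\ref{fig:TQFT}. The Frobenius, symmetric, and isomorphism axioms of an identical twin Frobenius algebra are then encoded precisely by the relations in Proposition~\ref{prop:relations} together with the imposed relations~\eqref{imposed-relation1}--\eqref{imposed-relation2}, all of which $\mathcal{T}$ preserves by functoriality. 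For a monoidal natural transformation $\eta\co\mathcal{T}\Rightarrow\mathcal{T}'$, one sets $\phi:=\eta_{0_{-}}=\eta_{0_{+}}$ and $\psi:=\eta_1$; naturality applied to each generating cobordism yields exactly the compatibility identities in Definition~\ref{def:homs in eT-Frob} (adapted to identical twin Frobenius algebras).

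In the reverse direction, I would define $\mathcal{F}$ from identical twin Frobenius algebras to identical twin TQFTs by invoking Proposition~\ref{prop:structure of eSing-2Cob}: given an identical twin Frobenius algebra $\textbf{iT}=(C,W,z_1,z_1^*,z_2,z_2^*)$, the universal property produces a unique (up to monoidal isomorphism) symmetric monoidal functor $\textbf{eSing-2Cob}\to\mathbf{C}$ that sends the generating object $0_{\pm}$ to $C$, the object $1$ to $W$, and each generating cobordism to the corresponding structure map. On morphisms, a homomorphism $(\phi,\psi)\co\textbf{iT}\to\overline{\textbf{iT}}$ determines a monoidal natural transformation $\mathcal{F}(\textbf{iT})\Rightarrow\mathcal{F}(\overline{\textbf{iT}})$ by setting the component at $0_{\pm}$ equal to $\phi$ and the component at $1$ equal to $\psi$, extending to tensor products in the unique way compatible with the monoidal structure; naturality on all of $\textbf{eSing-2Cob}$ then reduces to checking naturality on generators, which is precisely the homomorphism condition.

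Next I would verify that $\mathcal{E}$ and $\mathcal{F}$ are mutually quasi-inverse. The composite $\mathcal{E}\circ\mathcal{F}$ is naturally isomorphic to the identity by construction: starting from $\textbf{iT}$, building $\mathcal{F}(\textbf{iT})$, and extracting the algebraic data recovers the same structure maps on $C$ and $W$. The composite $\mathcal{F}\circ\mathcal{E}$ is naturally isomorphic to the identity because the universal property pins down a functor uniquely up to monoidal isomorphism by its action on the generators, so $\mathcal{F}(\mathcal{E}(\mathcal{T}))$ and $\mathcal{T}$ must agree up to such an isomorphism. Finally, one notes that tensor product of identical twin Frobenius algebras is sent to composition with the obvious monoidal structure on functors (both are defined component-wise on $C$ and $W$), so the equivalence is symmetric monoidal.

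The main obstacle is purely bookkeeping: making sure that the single object $C$ plays the role of both $\mathcal{T}(0_+)$ and $\mathcal{T}(0_-)$ coherently, and that every defining relation of an identical twin Frobenius algebra (as streamlined in Definition~\ref{def:new algebra} and the discussion preceding it, including the reduced genus-one and centrality relations from Remark~\ref{rem1}) corresponds to a cobordism relation already holding in $\textbf{eSing-2Cob}$. Since Proposition~\ref{prop:structure of eSing-2Cob} was proved precisely to guarantee this correspondence, the arguments mirror verbatim the proof of the analogous corollary for twin Frobenius algebras and twin TQFTs in~\cite{CC3}, and for the sake of brevity I would refer the reader there for the full verification.
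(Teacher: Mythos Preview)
Your proposal is correct and follows the intended approach: the paper states the corollary without proof, treating it as an immediate consequence of Proposition~\ref{prop:structure of eSing-2Cob}, and you have spelled out precisely the standard universal-property argument that underlies such a statement. One small inaccuracy worth fixing: the identification $\mathcal{T}(0_{-})\cong\mathcal{T}(0_{+})$ is not furnished by a cylinder (there is none between oppositely oriented circles) but by the mutually inverse cobordisms $f$ and $g$ of Remark~\ref{rem2}.
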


\begin{definition}
 Let $\Sigma \in \textbf{Sing-2Cob}$ be a singular 2-cobordism. Define the \textit{degree} of $\Sigma$ as $\deg(\Sigma): = -\chi(\Sigma),$ where $\chi(\Sigma)$ is the Euler characteristic of $\Sigma$.
 \end{definition}
  
 The degree of a cobordism is additive under composition. We also remark that the degree of multiplication/comultiplication cobordisms and unit/counit cobordisms (the first two and last two cobordisms, respectively, given in~\eqref{eg:generators_circle} and \eqref{eg:generators_web}) is 1 and $-1$, respectively, while the singular cobordisms depicted in ~\eqref{eg:generators_web_circle_one} have degree zero. 

Thus the category $\textbf{Sing-2Cob}$ is now graded, and since relations \eqref{imposed-relation1} and \eqref{imposed-relation2} are degree-homogeneous, so is the category $\textbf{eSing-2Cob}$. 


\section{An identical twin TQFT}\label{sec:TQFT}
 
We are ready to describe now the degree-preserving (identical twin) TQFT which allows the recovery of the universal dot-free $sl(2)$ foam cohomology for links. 

Let $R = \mathbb{Z}[\frac{1}{2}, i][ a, h]$ be the graded ring considered in Section~\ref{sec:univKh}, and let $\mathbf{C}$ be the category $\textbf{R-Mod}$.
Consider the $R$-module $\mathcal{A} = R [X]/(X^2 - hX - a)$ with inclusion map $\iota \co R \to \mathcal{A}, \iota(1) = 1$, and make it graded by $\text{deg}(1) = -1$ and $\text{deg}(X) = 1$. Equip $\mathcal{A}$ with two commutative Frobenius structures 
 \[ \mathcal{A}_C = (\mathcal{A}, m_C, \iota_C, \Delta_C, \epsilon_C), \quad \mathcal{A}_W = (\mathcal{A}, m_W, \iota_W, \Delta_W, \epsilon_W),\]
 where $\iota_C = \iota_W = \iota.$ The multiplication maps $m_{C,W}\co \mathcal{A} \otimes \mathcal{A} \rightarrow \mathcal{A}$ are given in the basis $\{1, X\}$ by
$$ \begin{cases}
 m_{C,W}(1 \otimes X) = m_{C,W}(X \otimes 1) = X\\ 
m_{C,W}(1 \otimes 1) =1,  m_{C,W}(X \otimes X) = hX + a.
 \end{cases}$$
 The comultiplication maps $\Delta_{C,W} \co \mathcal{A} \to \mathcal{A} \otimes \mathcal{A}$ are dual to the multiplication maps via the trace maps $\epsilon_{C,W} \co \mathcal{A} \to R$ 
  \[\begin{cases}\epsilon_C(1) = 0 \\ \epsilon_C(X) = 1, \end{cases} \quad \,\begin{cases}\epsilon_W(1) = 0 \\ \epsilon_W(X) = -i,\end{cases} \]
and are defined by the rules
 \[ \begin{cases}
 \Delta_C(1) = 1 \otimes X + X \otimes 1-h 1 \otimes 1\\ 
 \Delta_C(X) = X \otimes X + a 1 \otimes 1,\end{cases} \quad  \begin{cases}
 \Delta_W(1) = i(1 \otimes X + X \otimes 1 - h 1\otimes 1)\\ 
 \Delta_W(X) = i(X \otimes X + a 1 \otimes 1).\end{cases} \]
  
We remark that $\mathcal{A}_W$ is a \textit{twisting} of $\mathcal{A}_C.$ Specifically, the comultiplication $\Delta_W$ and counit $\epsilon_W$ are obtained from $\Delta_C$ and $\epsilon_C$ by `twisting' them with invertible element $-i \in \mathcal{A}$: 
  \[ \epsilon_W (x) = \epsilon_C(-ix), \quad \Delta_W(x) = \Delta_C((-i)^{-1}x) = \Delta_C(ix), \,\,\text {for all}\, \,x \in \mathcal{A}. \]

We define the following homomorphisms:
\[z_1 \co \mathcal{A}_C \to \mathcal{A}_W, \begin{cases} z_1(1) = 1 \\z_1(X) = X, \end{cases} \quad z_1^* \co\mathcal{A}_W \to \mathcal{A}_C, \begin{cases} z_1^*(1) = -i \\z_1^*(X) = -iX. \end{cases} \]

\[z_2 \co \mathcal{A}_C \to \mathcal{A}_W, \begin{cases} z_2(1) = 1 \\z_2(X) =  h - X, \end{cases} \quad z_2^* \co\mathcal{A}_W \to \mathcal{A}_C, \begin{cases} z_2^*(1) = i \\z_2^*(X) = i(h - X). \end{cases} \]
Straightforward computations show that $(\mathcal{A}_C, \mathcal{A}_W, z_1, z_1^*, z_2, z_2^*)$ satisfies the axioms of an identical twin Frobenius algebra.  
 
The corresponding identical twin TQFT $\mathcal{T} \co \textbf{eSing-2Cob} \to \textbf{R-Mod}$ assigns the ground ring $R$ to the empty 1-manifold, and assigns $\mathcal{A}^{\otimes k}$ to an object $\textbf{n} = (n_1, n_2, \dots, n_k)$ in \textbf{eSing-2Cob}. The $i$-th factor of $\mathcal{A}^{\otimes k}$  is endowed with the structure $\mathcal{A}_C$ if $n_i = 0_{-} = \raisebox{-3pt}{\includegraphics[height=0.15in]{circle.pdf}} $ or $n_i = 0_{+} = \raisebox{-3pt}{\includegraphics[height=0.15in]{loop.pdf}},$ and with the structure $\mathcal{A}_W$ if $n_i = 1 = \raisebox{-3pt}{\includegraphics[height=0.15in]{singcircle.pdf}}.$ 
 On the generating morphisms of the category \textbf{eSing-2Cob}, the functor $\mathcal{T}$ is defined as depicted in Figure~\ref{fig:TQFT}. 
 
Since $(\mathcal{A}_C, \mathcal{A}_W, z_1, z_1^*, z_2, z_2^*)$ forms an identical twin Frobenius algebra, 
the functor $\mathcal{T}$ respects the relations among the set of generators for \textbf{eSing-2Cob}, and therefore, it is well defined. It is also easy to verify that $\mathcal{T}$ is degree-preserving. (Note that $m_{C,W}$ and $\Delta_{C,W}$ are maps of degree $1,$ while $\iota_{C,W}$ and $\epsilon_{C,W}$ are maps of degree $-1.$)  

 \begin{proposition}\label{prop:functor T and local relations}
 The functor $\mathcal{T}$ satisfies the local relations $\tilde{\ell}$ of the universal dot-free $sl(2)$ foam theory.
 \end{proposition}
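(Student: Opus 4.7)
The plan is to verify each of the five defining relations in $\tilde{\ell} = \{(S), (T), (UFO), (G2), (SF)\}$ directly. For each relation I would decompose both sides as a composite of the generators of $\textbf{eSing-2Cob}$ listed in~\eqref{eg:generators_circle}--\eqref{eg:generators_web_circle_one}, push the composite through $\mathcal{T}$ via Figure~\ref{fig:TQFT}, and check equality as $R$-linear maps on the basis $\{1, X\}$ of $\mathcal{A}$. Since the paragraph preceding the proposition already asserts that $(\mathcal{A}_C, \mathcal{A}_W, z_1, z_1^*, z_2, z_2^*)$ forms an identical twin Frobenius algebra, the functor $\mathcal{T}$ is well-defined by Proposition~\ref{prop:structure of eSing-2Cob}; only the extra relations in $\tilde{\ell}$ still need checking.

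The relations (S), (T), and (G2) live entirely on the $C$-side and reduce to one-line computations in $\mathcal{A}_C$: $\mathcal{T}(\text{sphere}) = \epsilon_C(\iota_C(1)) = 0$; $\mathcal{T}(\text{torus}) = \epsilon_C(m_C(\Delta_C(1))) = \epsilon_C(2X - h) = 2$; and $m_C(\Delta_C(2X - h)) = h^2 + 4a$, which is exactly (G2). The (SF) relation is algebraically the cutting-neck identity for the symmetric Frobenius algebra $\mathcal{A}_W$: one expresses $\id_{\mathcal{A}_W}$ as the rank-one sum determined by an $\epsilon_W$-dual pair of bases of $\mathcal{A}_W$, and the factor $\tfrac{1}{2}$ on each term is absorbed into the explicit formulas for $\Delta_W$ and $m_W$ (this is precisely where $2^{-1} \in R$ enters); verification is then a direct evaluation on $1$ and $X$ using $X^2 = hX + a$.

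For (UFO), each closed UFO-surface would be decomposed by cutting along its singular circle into two pieces that live on the $C$-side and are joined by a zipper/cozipper pair $(z_j, z_k^*)$, with $j, k \in \{1,2\}$ dictated by the preferred-facet convention. The cleanest route uses the imposed relations~\eqref{imposed-relation1}--\eqref{imposed-relation2}, namely $z_1^* \circ z_1 = -i\,\id_C$ and $z_2^* \circ z_2 = i\,\id_C$, to collapse each composite $z_k^* \circ z_j$ to a scalar and reduce the UFO evaluation to one entirely in $\mathcal{A}_C$. For example, UFO-g1-lower decomposes as $\epsilon_C \circ m_C \circ \Delta_C \circ z_2^* \circ z_2 \circ \iota_C = i \cdot \epsilon_C(m_C(\Delta_C(1))) = 2i$; the $(z_1, z_1^*)$-variant gives UFO-g1-upper $= -2i$, and UFO-g0 and UFO-g2 both collapse to $0$ via the (S) and (G2) calculations already carried out.

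The main obstacle is exactly this last step: the upper/lower labels on UFO-g1 correspond to whether the zipper/cozipper pair straddling the singular circle is $(z_1, z_1^*)$ or $(z_2, z_2^*)$, and one must confirm that the preferred-facet convention from Section~\ref{sec:univKh} matches each UFO variant with the correct pair so that the signs $\pm 2i$ come out right. Once this orientation bookkeeping is fixed, every case in $\tilde{\ell}$ reduces to a finite calculation in a rank-$2$ free $R$-module, and the proposition follows.
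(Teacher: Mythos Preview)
Your overall plan---decompose each side of a relation into the generators \eqref{eg:generators_circle}--\eqref{eg:generators_web_circle_one}, apply $\mathcal{T}$ via Figure~\ref{fig:TQFT}, and check on $\{1,X\}$---is exactly the paper's approach, and your treatment of (S), (T), and (G2) is correct. But two of your identifications are wrong, and as written the argument does not verify the stated relations.

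\textbf{The (SF) relation.} This is not a neck-cutting identity for $\mathcal{A}_W$. The cobordisms in (SF) have oriented-circle boundary, i.e.\ they live on the $C$-side, and each summand carries a genus-one piece (that is the ``g1'' in the picture names). Algebraically the relation reads
\[
\id_{\mathcal{A}_C} \;=\; \tfrac{1}{2}\,\iota_C \circ \bigl(\epsilon_C \circ m_C \circ \Delta_C\bigr) \;+\; \tfrac{1}{2}\,\bigl(m_C \circ \Delta_C \circ \iota_C\bigr) \circ \epsilon_C,
\]
so each term contains a handle $m_C\circ\Delta_C$, not merely a cap-cup pair. What you describe---writing $\id_{\mathcal{A}_W}$ via an $\epsilon_W$-dual basis---is the separate relation (CN), which the paper treats only after the proof. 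The actual check of (SF) is the paper's computation $[\iota_C\circ\epsilon_C\circ m_C\circ\Delta_C + m_C\circ\Delta_C\circ\iota_C\circ\epsilon_C](1)=2$ and $[\cdots](X)=2X$.

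\textbf{The (UFO) relations.} Cutting a UFO along its singular circle separates two facets of \emph{opposite} orientation, so the cylinder-with-singular-circle between the two caps is the morphism $f=z_2^*\circ z_1$ (or $g=z_1^*\circ z_2$), passing from $0_-$ to $0_+$. It is \emph{not} $z_k^*\circ z_k$, which connects same-oriented circles; in particular the imposed relations \eqref{imposed-relation1}--\eqref{imposed-relation2} do not collapse the relevant composite to a scalar, because $f\neq \pm i\cdot\id$. The paper computes, for instance,
\[
\mathcal{T}(\text{UFO-g1-upper})=\epsilon_C\circ f\circ m_C\circ\Delta_C\circ\iota_C=\epsilon_C\bigl(f(2X-h)\bigr)=-2i,
\]
and the ``upper/lower'' distinction refers to whether the handle $m_C\circ\Delta_C$ is composed before or after $f$ (i.e.\ on which hemisphere the genus sits), not to a choice between $(z_1,z_1^*)$ and $(z_2,z_2^*)$. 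Your numbers happen to agree only because $f(1)=i=(z_2^*\circ z_2)(1)$ coincidentally, but $f(X)=i(h-X)\neq iX$, so the decomposition you wrote is not the UFO. Once you replace $z_k^*\circ z_k$ by $f$ and reinterpret upper/lower as handle position, your computations go through and match the paper.
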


 \begin{proof}
 First, let us find the composite morphisms $f = z_2^* \circ z_1$ and $g = z_1^* \circ z_2$:
\[ f \co \mathcal{A}_C \to \mathcal{A}_C, \begin{cases} f(1) = i \\f(X) = i(h - X), \end{cases} \quad g \co\mathcal{A}_C \to \mathcal{A}_C, \begin{cases} g(1) = -i \\ g(X) = -i(h - X). \end{cases} \] 

 Then, notice that $\mathcal{T}\left(\,\raisebox{-4pt}{\includegraphics[width=0.4in]{torus.pdf}}\,\right)=2$ since $\mathcal{T}\left(\,\raisebox{-4pt}{\includegraphics[width=0.4in]{torus.pdf}}\,\right) = \epsilon_C \circ m_C \circ \Delta_C \circ \iota_C,$ and $ (\epsilon_C \circ m_C \circ \Delta_C \circ \iota_C)(1) = (\epsilon_C \circ m_C \circ \Delta_C)(1) = \epsilon_C (2X - h) = 2.$ 

Also $\mathcal{T}\left(\raisebox{-7pt}{\includegraphics[height=.25in]{ufo-g0.pdf}}\right) = 0 = \mathcal{T}\left(\, \raisebox{-8pt}{\includegraphics[width=0.25in]{sph.pdf}}\,\right)$ since $(\epsilon_C \circ f \circ \iota_C)(1) = 0$ and $(\epsilon_C \circ \iota_C)(1)  = \epsilon_C(1) = 0.$ 
  
We have that $\mathcal{T}\left(\,\raisebox{-5pt}{\includegraphics[height=.25in]{cap-g2.pdf}}\,\right) =m_C \circ \Delta_C \circ m_C \circ \Delta_C  \circ \iota_C.$  
Moreover, $(m_C \circ  \Delta_C) (1) = m_C(1 \otimes X + X \otimes 1 - h 1 \otimes 1) = 2X -h$ and $(m_C \circ  \Delta_C) (X) =  m_C(X \otimes X + a 1 \otimes 1) = hX + 2 a.$  
Furthermore, $(m_C \circ \Delta_C \circ m_C \circ \Delta_C \circ \iota_C)(1) =  (m_C \circ \Delta_C)(2X - h) =  2(hX + 2a) - h(2X - h) = h^2 + 4a = (h^2 + 4a) \iota_C(1).$ Therefore  $\mathcal{T}\left(\,\raisebox{-5pt}{\includegraphics[height=.25in]{cap-g2.pdf}}\,\right) = (h^2 + 4a)\,\mathcal{T}\left(\, \raisebox{-5pt}{\includegraphics[height=.2in]{capor.pdf}}\,\right)$.

  Furthermore, $(\epsilon_C \circ f \circ m_C \circ \Delta_C \circ \iota_C)(1) = (\epsilon_C \circ f)(2X -h) = \epsilon_C(2i(h -X) -i h) =  -2i,$ which corresponds to the local relation $\mathcal{T} \left (\raisebox{-8pt}{\includegraphics[height=.3in]{ufo-g1-upper.pdf}}\right ) = -2i.$
 Similar computations show that $\mathcal{T} \left (\raisebox{-8pt}{\includegraphics[height=.3in]{ufo-g1-lower.pdf}} \right)= 2i$ and  $\mathcal{T} \left (\raisebox{-8pt}{\includegraphics[height=.3in]{ufo-g2.pdf}} \right) = 0.$
 
 Finally, $\mathcal{T}\left(\,\raisebox{-15pt}{\includegraphics[height=.45in]{surgery1-g1.pdf}}\,\right) + \mathcal{T}\left(\,\raisebox{-15pt}{\includegraphics[height=.45in]{surgery2-g1.pdf}}\,\right) = [\iota_C \circ (\epsilon_C \circ m_C \circ \Delta_C)] + [(m_C \circ\Delta_C \circ \iota_C) \circ \epsilon_C].$ Then
  $[\iota_C \circ (\epsilon_C \circ m_C \circ \Delta_C)](1) +  [(m_C \circ\Delta_C \circ \iota_C) \circ \epsilon_C](1) = (\iota_C \circ \epsilon_C)(2X -h) + 0 = 2,$ 
and $[\iota_C \circ (\epsilon_C \circ m_C \circ \Delta_C)](X) +  [(m_C \circ\Delta_C \circ \iota_C) \circ \epsilon_C](X) = (\iota_C \circ \epsilon_C)(hX + 2a) + (m_C \circ \Delta_C)(1) = h + (2X -h) = 2X.$ Therefore, the following holds:
  \[ \mathcal{T}\left(\,\raisebox{-15pt}{\includegraphics[height=.45in]{identity_circle.pdf}}\,\right) = \frac{1}{2} \mathcal{T}\left(\,\raisebox{-15pt}{\includegraphics[height=.45in]{surgery1-g1.pdf}}\,\right) + \frac{1}{2}\mathcal{T}\left(\,\raisebox{-15pt}{\includegraphics[height=.45in]{surgery2-g1.pdf}}\,\right).\] \end{proof}
  
  Observe that $\displaystyle \frac{i}{2} [ \iota_W \circ (\epsilon_C \circ m_C \circ \Delta_C \circ z_1^*)] + \frac{i}{2} [(z_1 \circ m_C \circ \Delta_C \circ \iota_C) \circ \epsilon_W] = \id_{\mathcal{A}_W},$ implying that $\mathcal{T}$  satisfies the ``cutting-neck" relation (CN) given in Section~\ref{sec:univKh}:

\begin{equation} \mathcal{T} \left(\, \raisebox{-20pt}{\includegraphics[ height=.7in]{cneck0-new.pdf}}\,\right ) = \displaystyle \frac{i}{2} \mathcal{T} \left (\, \raisebox{-20pt}{\includegraphics[height=.7in]{cneck1-g1-new.pdf}} \,\right ) + \frac{i}{2} \mathcal{T} \left (\,\raisebox{-20pt}{\includegraphics[height=0.7in]{cneck2-g1-new.pdf}} \,\right).\end{equation}

  \begin{proposition}\label{prop:isom_circle}
  The $R$-module map $\alpha_C \co \mathcal{A}_C \to R\{1\} \oplus R\{-1\}$ given by $\alpha_C =\left (\,\mathcal{T}(\raisebox{-5pt}{\includegraphics[height=0.2in]{counit.pdf}}), \mathcal{T}(\frac{1}{2}\raisebox{-5pt}{\includegraphics[height=0.2in]{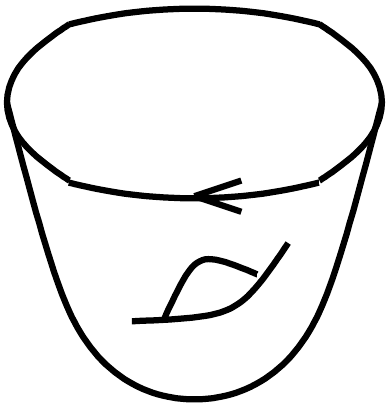}} + \frac{h}{2} \raisebox{-5pt}{\includegraphics[height=0.2in]{counit.pdf}}) \,\right)^t$ realizes the isomorphism $\mathcal{A}_C \cong R\{1\} \oplus R\{-1\}.$
  \end{proposition}
  
  \begin{proof}
First, we remark that $\alpha_C$ is degree-preserving. Consider the $R$-module map $\beta_C \co R\{1\} \oplus R\{-1\} \to \mathcal{A}_C$ given by $\beta_C = \left(\, \mathcal{T}(\frac{1}{2}\raisebox{-5pt}{\includegraphics[height=0.2in]{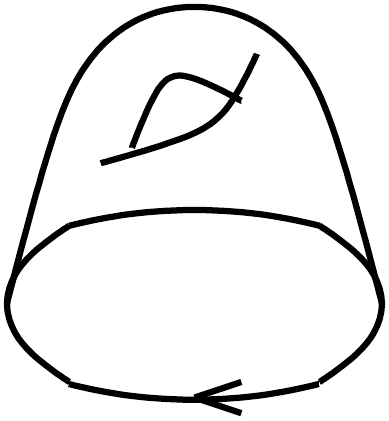}} - \frac{h}{2} \raisebox{-5pt}{\includegraphics[height=0.2in]{unit.pdf}}), \mathcal{T}(\raisebox{-5pt}{\includegraphics[height=0.2in]{unit.pdf}})\,\right)$ and notice that it is degree-preserving and satisfies $\beta_C \circ \alpha_C = \mathcal{T}\left(\,\raisebox{-8pt}{\includegraphics[height=0.3in]{identity_circle.pdf}}\,\right) = \id_{\mathcal{A}_C},$ and $\alpha_C \circ \beta_C = \id_{R\{1\} \oplus R\{-1\}}.$ Therefore, $\alpha_C$ and $\beta_C$ are mutually inverse isomorphisms in $\textbf{R-Mod}.$
  \end{proof}
  
    \begin{proposition}\label{prop:isom_web}
  The $R$-module map $\alpha_W \co \mathcal{A}_W \to R\{1\} \oplus R\{-1\}$ given by $\alpha_W = \left(\,\mathcal{T}(\raisebox{-5pt}{\includegraphics[height=0.2in]{singcounit.pdf}}), \mathcal{T}(\frac{1}{2}\raisebox{-8pt}{\includegraphics[height=0.25in]{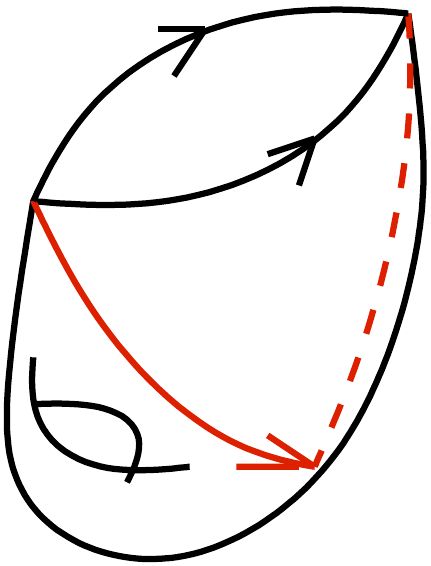}} + \frac{h}{2} \raisebox{-5pt}{\includegraphics[height=0.2in]{singcounit.pdf}}) \,\right)^t$ realizes the isomorphism $\mathcal{A}_W \cong R\{1\} \oplus R\{-1\}.$
  \end{proposition}

\begin{proof}
It is easy to see that $\alpha_W$ is degree-preserving. Consider the $R$-module map $\beta_W \co R\{1\} \oplus R\{-1\} \to \mathcal{A}_C$ given by $\beta_W = \left(\, \mathcal{T}( \frac{i}{2}\raisebox{-5pt}{\includegraphics[height=0.25in]{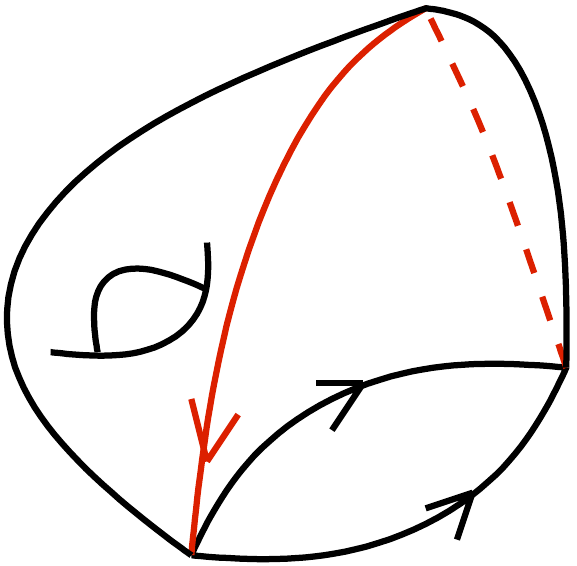}} - \frac{ih}{2} \raisebox{-5pt}{\includegraphics[height=0.2in]{singunit.pdf}}), \mathcal{T}(i \raisebox{-5pt}{\includegraphics[height=0.2in]{singunit.pdf}})\,\right).$ We have that $\beta_W \circ \alpha_W = \mathcal{T}\left(\,\raisebox{-8pt}{\includegraphics[height=0.3in]{identity_web.pdf}}\,\right) = \id_{\mathcal{A}_W},$ and $\alpha_W \circ \beta_W = \id_{R\{1\} \oplus R\{-1\}}.$ Therefore, $\alpha_W$ and $\beta_W$ are mutually inverse isomorphisms in $\textbf{R-Mod}.$
\end{proof}

\section{ A new link cohomology}\label{sec:link cohomology}

Given a plane diagram $D$ representing an oriented link $L$, we construct a cochain complex $\mathcal{C}(D)$ of graded modules over the commutative ring $R$ considered before, whose graded Euler characteristic is $P_2(L)$, the quantum $sl(2)$ polynomial of $L$. 

Let $I$ be the set of crossings of $D,$ and $n_{+}$ (respectively $n_{-}$) be the number of positive (respectively negative) crossings; let $n = \vert I \vert = n_+ + n_-.$ We associate to $D$ an $n$-dimensional cube $\overline{\mathcal{U}}$ described below, whose vertices and edges are objects and morphisms, respectively,  in the category $\textbf{eSing-2Cob}$. Vertices of the cube are in one-to-one correspondence with subsets of $I.$ 

We begin by associating to each crossing in $D$ either the oriented resolution or the singular resolution, as explained in Figure~\ref{fig:resolutions}. To $J \subset I$ we associate a web $\Gamma_J,$ namely the $J$-resolution of $D,$ where the crossing $k$ receives its $1$-resolution if $k \in J,$ otherwise it receives the $0$-resolution. Each resolution $\Gamma_J$ is a disjoint union of oriented circles and closed webs with an even number of vertices. Notice that these are exactly the webs decorating the vertices of the $n$-dimensional cube $\mathcal{U}$ in the $sl(2)$ foam cohomology construction.

 \begin{figure}[ht]
$$\xymatrix@C=30mm@R=0.1mm{
  & \raisebox{-12pt} {\includegraphics[height=0.35in]{poscrossing.pdf}} \ar[ld]_0\ar[rd]^1& \\
\raisebox{-10pt}{\includegraphics[height=0.35in]{singres.pdf}} & & 
\raisebox{-10pt}{\includegraphics[height=0.35in]{orienres.pdf}} \\
  & \raisebox{-8pt}{\includegraphics[height=0.35in]{negcrossing.pdf}} \ar[ul]^1 \ar[ur]_0&
}$$
\caption{Resolutions}
\label{fig:resolutions}
\end{figure}

Next step consists in simplifying each web component of each resolution $\Gamma_J$ by removing pairs of adjacent vertices of the same type, thus `imitating' the web skein relations~\eqref{skein_rembv}, until each web is replaced by a bi-web. Each simplified resolution, call it $\overline{\Gamma}_J$, is then a disjoint union of oriented circles and bi-webs, and may be regarded as an object in the category $\textbf{eSing-2Cob}$. 
  
 Number the components of each $\overline{\Gamma}_J$ by $1, \dots, k_J$, with $k_J \in \mathbb{N},$ and define a sequence $m^{(J)} = (m_1^{(J)}, \dots, m_{k_J}^{(J)}),$ where $m_l^{(J)} = 0_{-}$ or $m_l^{(J)} = 0_{+}$ if the $l$-th component is a negatively or positively oriented circle, respectively, and $m_l^{(J)} = 1$ if the $l$-th component is a bi-web.    
 
 For every $k \in I$ and $J \subset I \backslash \{k\},$ the components of the resolutions $\overline{\Gamma}_J$ and $\overline{\Gamma}_{J \cup k}$ differ by either an oriented circle or a bi-web. Consider the singular 2-cobordism $\overline{S}_{(J, k)} \co \overline{ \Gamma}_J \to \overline {\Gamma}_{J \cup k}$ which is a cylinder over $\overline{\Gamma}_J$ except for a small neighborhood of the crossing $k,$ where it looks like one of the following singular cobordisms:
 
  \begin{equation}\label{eq:cobordisms in U}
   \psset{xunit=.22cm,yunit=.22cm}
 \quad
 \begin{pspicture}(5,6)
   \rput(3.6, 5.7){\includegraphics[height=0.35in]{identity_circle.pdf}}
 \rput(0.2, 5.7){\includegraphics[height=0.35in]{morphism_g.pdf}} 
 \rput(1.85, -0.5){\includegraphics[height=0.35in]{zipper.pdf}}
 \rput(1.9, 2.6){\includegraphics[height=0.35in]{mult.pdf}}
 \end{pspicture} \quad 
 \begin{pspicture}(6,6)
   \rput(0.2, 5.7){\includegraphics[height=0.35in]{identity_circle.pdf}}
 \rput(3.6, 5.7){\includegraphics[height=0.35in]{morphism_g.pdf}} 
 \rput(1.85, - 0.5){\includegraphics[height=0.35in]{zipper.pdf}}
 \rput(1.9, 2.6){\includegraphics[height=0.35in]{mult.pdf}}
 \end{pspicture} \quad 
  \begin{pspicture}(6,6)
  \rput(3.6, 5.8){\includegraphics[height=0.35in]{zipper.pdf}}
 \rput(0.2, 5.8){\includegraphics[height=0.35in]{identity_web.pdf}} 
 \rput(1.9, 2.6){\includegraphics[height=0.35in]{singmult.pdf}}
 \end{pspicture} \quad
  \begin{pspicture}(6,6)
  \rput(3.6, 5.8){\includegraphics[height=0.35in]{identity_web.pdf}}
 \rput(0.2, 5.8){\includegraphics[height=0.35in]{zipper.pdf}} 
 \rput(1.9, 2.6){\includegraphics[height=0.35in]{singmult.pdf}}
 \end{pspicture} \quad
  \begin{pspicture}(6,6)
    \rput(3.6, 6.8){\includegraphics[height=0.35in]{identity_web.pdf}}
 \rput(0.2, 6.6){\includegraphics[height=0.35in]{morphism_g.pdf}} 
  \rput(3.6, 3.6){\includegraphics[height=0.35in]{identity_web.pdf}}
 \rput(0.2, 3.6){\includegraphics[height=0.35in]{zipper.pdf}} 
 \rput(1.9, 0.3){\includegraphics[height=0.35in]{singmult.pdf}}
 \end{pspicture} \quad
  \begin{pspicture}(5,6)
    \rput(3.6, 6.6){\includegraphics[height=0.35in]{morphism_g.pdf}}
 \rput(0.2, 6.8){\includegraphics[height=0.35in]{identity_web.pdf}} 
  \rput(3.6, 3.6){\includegraphics[height=0.35in]{zipper.pdf}}
 \rput(0.2, 3.6){\includegraphics[height=0.35in]{identity_web.pdf}} 
 \rput(1.9, 0.3){\includegraphics[height=0.35in]{singmult.pdf}}
 \end{pspicture} \quad
\begin{pspicture}(5,5)
 \rput(3.9, 2.6){\includegraphics[height=0.35in]{singmult.pdf}}
 \end{pspicture} \quad
\end{equation}
 
  \begin{equation}\label{eq:cobordisms in U-bis}
   \psset{xunit=.22cm,yunit=.22cm}
 \quad
\begin{pspicture}(5,6)
-\,\,\rput(4, -0.4){\includegraphics[height=0.35in]{identity_circle.pdf}}
 \rput(0.8, -0.4){\includegraphics[height=0.35in]{morphism_f.pdf}} 
 \rput(2.4, 5.7){\includegraphics[height=0.35in]{cozipper.pdf}}
\rput(2.4, 2.6){\includegraphics[height=0.35in]{comult.pdf}}
 \end{pspicture} \quad 
 \begin{pspicture}(6,6)
-\,\,\rput(4, -0.4){\includegraphics[height=0.35in]{morphism_f.pdf}}
 \rput(0.8, -0.4){\includegraphics[height=0.35in]{identity_circle.pdf}} 
 \rput(2.4, 5.7){\includegraphics[height=0.35in]{cozipper.pdf}}
\rput(2.4, 2.6){\includegraphics[height=0.35in]{comult.pdf}}
 \end{pspicture} \quad 
  \begin{pspicture}(6,6)
 i \,\, \rput(4.1, -0.4){\includegraphics[height=0.35in]{cozipper.pdf}}
 \rput(0.7, -0.4){\includegraphics[height=0.35in]{identity_web.pdf}} 
 \rput(2.4, 2.8){\includegraphics[height=0.35in]{singcomult.pdf}}
 \end{pspicture} \quad
  \begin{pspicture}(6,6)
 i\,\, \rput(4.1, -0.4){\includegraphics[height=0.35in]{identity_web.pdf}}
 \rput(0.7, -0.4){\includegraphics[height=0.35in]{cozipper.pdf}} 
 \rput(2.4, 2.8){\includegraphics[height=0.35in]{singcomult.pdf}}
 \end{pspicture} \quad
  \begin{pspicture}(6,6)
 i\,\, \rput(4.1, 2.5){\includegraphics[height=0.35in]{identity_web.pdf}}
 \rput(0.7, 2.5){\includegraphics[height=0.35in]{cozipper.pdf}} 
\rput(2.4, 5.7){\includegraphics[height=0.35in]{singcomult.pdf}}
 \rput(4.1, -0.7){\includegraphics[height=0.35in]{identity_web.pdf}}
 \rput(0.7, -0.5){\includegraphics[height=0.35in]{morphism_f.pdf}} 
 \end{pspicture} \quad
   \begin{pspicture}(5,6)
 i\,\, \rput(4.1, 2.5){\includegraphics[height=0.35in]{cozipper.pdf}}
 \rput(0.7, 2.5){\includegraphics[height=0.35in]{identity_web.pdf}} 
\rput(2.4, 5.7){\includegraphics[height=0.35in]{singcomult.pdf}}
 \rput(4.1, -0.5){\includegraphics[height=0.35in]{morphism_f.pdf}}
 \rput(0.7,- 0.7){\includegraphics[height=0.35in]{identity_web.pdf}} 
 \end{pspicture} \quad
  \begin{pspicture}(5,5)
  \rput(2.9, 0.6){\includegraphics[height=0.35in]{singcomult.pdf}}
 \end{pspicture} 
\end{equation}
\vspace{0.1cm}

We define the $n$-dimensional cube $\overline{\mathcal{U}}$ to have as vertices the sequences $m^{(J)}\{2n_+ -n_- -|J|\},$ and as edges the singular 2-cobordisms $\overline{S}_{(J, k)}$ regarded as morphisms in the category $ \textbf{eSing-2Cob}$. Here $\{m \}$ is the grading shift operator that lowers the grading by $m$. 

We remark that the resolution-simplification step used above suggests that we need to impose the relations \eqref{imposed-relation1} and~ \eqref{imposed-relation2} right at the `topological world', before we apply any possible TQFT. (Compare again the web skein relations~\eqref{skein_rembv} with the isomorphisms~\eqref{isomorphisms}, and then with relations \eqref{imposed-relation1}-\eqref{imposed-relation2}.) Hence the cube $\overline{\mathcal{U}}$ is regarded as lying in the category $ \textbf{eSing-2Cob}$, rather than $\textbf{Sing-2Cob}$.

The morphism decorating an edge of the cube $\overline{\mathcal{U}}$ is a disjoint union of a finite number of cylinders over an oriented circle and/or over a bi-web with a saddle cobordism $S$ that looks like one of those in~\eqref{eq:cobordisms in U} or~\eqref{eq:cobordisms in U-bis}; such a saddle cobordism $S$ is a composition of generating morphisms of \textbf{eSing-2Cob}. Relations~\eqref{imposed-relation1} and~\eqref{imposed-relation2} together with the fact that the bi-web forms a commutative Frobenius algebra object in \textbf{eSing-2Cob} imply that each face of $\overline{\mathcal{U}}$ commutes.

Now that we have the topological picture, namely the cube $\overline{\mathcal{U}},$ we are ready to apply the degree-preserving TQFT $\mathcal{T} \co \textbf{eSing-2Cob} \to \textbf{R-Mod}$ given in Section~\ref{sec:TQFT}, and turn the commutative cube $\overline{\mathcal{U}}$ in $\textbf{eSing-2Cob}$ into a commutative cube $\mathcal{T}(\overline{\mathcal{U}})$ in $\textbf{R-Mod}.$ The degree shift of each vertex assures that each edge of $\mathcal{T}(\overline{\mathcal{U}})$ is a grading-preserving homomorphism. 

We have seen that the functor $\mathcal{T}$ associates $\mathcal{A}_C$ to both, positively and negatively oriented circles. However, it will be useful to work with different bases for  $\mathcal{A}_C,$ namely
\[\mathcal{T}(\raisebox{-3pt}{\includegraphics[height=0.15in]{circle.pdf}} ) =  <1, X>_R \quad \text{and} \quad \mathcal{T}(\raisebox{-3pt}{\includegraphics[height=0.15in]{loop.pdf}} ) =  <1, h -X>_R.\]
Under this convention for $\mathcal{A}_C,$ the homomorphisms decorating the commutative cube $\mathcal{T}(\overline{\mathcal{U}})$ are given by the following rules (we omit here $m_W$ and $\Delta_W,$ whose rules are already clear):

\begin{eqnarray*}\mathcal{T}\left (\psset{xunit=.22cm,yunit=.22cm}
 \begin{pspicture}(4,4.5)
  \rput(3.1, 2.75){\includegraphics[height=0.25in]{identity_circle.pdf}}
 \rput(0.7, 2.75){\includegraphics[height=0.25in]{morphism_g.pdf}} 
 \rput(1.85, -1.6){\includegraphics[height=0.25in]{zipper.pdf}}
 \rput(1.9, 0.6){\includegraphics[height=0.25in]{mult.pdf}}
 \end{pspicture} \right ) &=& z_1 \circ m_C \circ (g \otimes \id_{\mathcal{A}_C})
  : \begin{cases}
1 \otimes 1 \to -i\\ 1 \otimes X \to -iX\\ (h - X) \otimes 1 \to -iX\\ (h - X) \otimes X \to -i(hX + a)
 \end{cases}\\
\mathcal{T}\left (\psset{xunit=.22cm,yunit=.22cm}
 \begin{pspicture}(4,4.5)
  \rput(0.7, 2.75){\includegraphics[height=0.25in]{identity_circle.pdf}}
 \rput(3.1, 2.75){\includegraphics[height=0.25in]{morphism_g.pdf}} 
 \rput(1.85, -1.6){\includegraphics[height=0.25in]{zipper.pdf}}
 \rput(1.9, 0.6){\includegraphics[height=0.25in]{mult.pdf}}
 \end{pspicture} \right ) &=& z_1 \circ m_C \circ (\id_{\mathcal{A}_C} \otimes \, g)
 : \begin{cases}
1 \otimes 1 \to -i\\ 1 \otimes (h -X) \to -iX\\ X \otimes 1 \to -iX\\ X \otimes (h -X) \to -i(hX + a)
 \end{cases} \end{eqnarray*}
 
 \begin{eqnarray*}
\mathcal{T}\left (\psset{xunit=.22cm,yunit=.22cm}
\begin{pspicture}(4,3)
 \rput(3.1, 1.4){\includegraphics[height=0.25in]{identity_web.pdf}}
 \rput(0.7, 1.4){\includegraphics[height=0.25in]{zipper.pdf}} 
 \rput(1.9, -0.9){\includegraphics[height=0.25in]{singmult.pdf}}
 \end{pspicture} \right) &=& m_W \circ (z_1 \otimes \id_{\mathcal{A}_W})\\ &=& m_W \circ (\id_{\mathcal{A}_W} \otimes \, z_1) =
 \mathcal{T} \left (\psset{xunit=.22cm,yunit=.22cm}
 \begin{pspicture}(4,3)
  \rput(3.1, 1.4){\includegraphics[height=0.25in]{zipper.pdf}}
 \rput(0.7, 1.4){\includegraphics[height=0.25in]{identity_web.pdf}} 
 \rput(1.9, -0.9){\includegraphics[height=0.25in]{singmult.pdf}}
 \end{pspicture}
 \right )
 : \begin{cases}
1 \otimes 1 \to 1\\ 1 \otimes X \to X\\ X \otimes 1 \to X\\ X \otimes X \to  hX + a
 \end{cases}\end{eqnarray*}
 \begin{eqnarray*}
 \mathcal{T}\left (\psset{xunit=.22cm,yunit=.22cm}
  \begin{pspicture}(4,5)
    \rput(3.1, 2.9){\includegraphics[height=0.25in]{identity_web.pdf}}
 \rput(0.7, 2.8){\includegraphics[height=0.25in]{morphism_g.pdf}} 
  \rput(3.1, 0.6){\includegraphics[height=0.25in]{identity_web.pdf}}
 \rput(0.7, 0.6){\includegraphics[height=0.25in]{zipper.pdf}} 
 \rput(1.9, -1.7){\includegraphics[height=0.25in]{singmult.pdf}}
 \end{pspicture} \right ) &=& m_W \circ ((z_1 \circ g) \otimes \id_{\mathcal{A}_W}) 
  : \begin{cases}
1 \otimes 1 \to -i\\ 1 \otimes X \to -iX\\ (h - X) \otimes 1 \to -iX\\ (h - X) \otimes X \to -i(hX + a)
 \end{cases}\end{eqnarray*}
 \begin{eqnarray*}
  \mathcal{T}\left (\psset{xunit=.22cm,yunit=.22cm}
  \begin{pspicture}(4,5)
    \rput(0.7, 2.9){\includegraphics[height=0.25in]{identity_web.pdf}}
 \rput(3.1, 2.8){\includegraphics[height=0.25in]{morphism_g.pdf}} 
  \rput(0.7, 0.6){\includegraphics[height=0.25in]{identity_web.pdf}}
 \rput(3.1, 0.6){\includegraphics[height=0.25in]{zipper.pdf}} 
 \rput(1.9, -1.7){\includegraphics[height=0.25in]{singmult.pdf}}
 \end{pspicture} \right ) &=& m_W \circ ( \id_{\mathcal{A}_W} \otimes \,(z_1 \circ g)) 
: \begin{cases}
1 \otimes 1 \to -i\\ 1 \otimes (h - X) \to -iX\\ X \otimes 1 \to -iX\\  X \otimes (h - X)  \to -i(hX + a).
 \end{cases}\end{eqnarray*}
 
 For the comultiplication type maps we have:
 \begin{eqnarray*}
  \mathcal{T}\left (\psset{xunit=.22cm,yunit=.22cm}
  \begin{pspicture}(4.8,4)
-\rput(3.6, -1.6){\includegraphics[height=0.25in]{identity_circle.pdf}}
 \rput(1.2, -1.6){\includegraphics[height=0.25in]{morphism_f.pdf}} 
 \rput(2.4, 2.8){\includegraphics[height=0.25in]{cozipper.pdf}}
\rput(2.4, 0.6){\includegraphics[height=0.25in]{comult.pdf}}
 \end{pspicture} \right ) 
 &:& \begin{cases}
 1 \to - [1 \otimes X + (h - X) \otimes 1 - h 1 \otimes 1]\\ X \to -[ (h - X) \otimes X + a 1 \otimes 1]
 \end{cases}\\
  \mathcal{T}\left (\psset{xunit=.22cm,yunit=.22cm}
  \begin{pspicture}(4.8,4)
-\rput(1.2, -1.6){\includegraphics[height=0.25in]{identity_circle.pdf}}
 \rput(3.6, -1.6){\includegraphics[height=0.25in]{morphism_f.pdf}} 
 \rput(2.4, 2.8){\includegraphics[height=0.25in]{cozipper.pdf}}
\rput(2.4, 0.6){\includegraphics[height=0.25in]{comult.pdf}}
 \end{pspicture} \right ) 
 &:& \begin{cases}
 1 \to - [1 \otimes (h - X) + X \otimes 1 - h 1 \otimes 1]\\ X \to -[  X \otimes (h - X) + a 1 \otimes 1]
 \end{cases}\end{eqnarray*}
 \begin{eqnarray*}
  \mathcal{T}\left (\psset{xunit=.22cm,yunit=.22cm}
   \begin{pspicture}(5,4)
 i \rput(4.1, -0.7){\includegraphics[height=0.25in]{cozipper.pdf}}
 \rput(1.7, -0.7){\includegraphics[height=0.25in]{identity_web.pdf}} 
 \rput(2.9, 1.6){\includegraphics[height=0.25in]{singcomult.pdf}}
 \end{pspicture} \right ) =
 \mathcal{T}\left (\psset{xunit=.22cm,yunit=.22cm}
   \begin{pspicture}(5,4)
 i \rput(1.7, -0.7){\includegraphics[height=0.25in]{cozipper.pdf}}
 \rput(4.1, -0.7){\includegraphics[height=0.25in]{identity_web.pdf}} 
 \rput(2.9, 1.6){\includegraphics[height=0.25in]{singcomult.pdf}}
 \end{pspicture} \right )
: \begin{cases} 1 \to i(1 \otimes X + X \otimes 1 -h 1 \otimes 1) \\ X \to i(X \otimes X + a 1 \otimes 1)
 \end{cases}
 \end{eqnarray*}
 
 \begin{eqnarray*}
  \mathcal{T}\left (\psset{xunit=.22cm,yunit=.22cm}
   \begin{pspicture}(5,4)
 i \rput(4.1, 0.6){\includegraphics[height=0.25in]{identity_web.pdf}}
 \rput(1.7, 0.6){\includegraphics[height=0.25in]{cozipper.pdf}} 
\rput(2.9, 2.9){\includegraphics[height=0.25in]{singcomult.pdf}}
 \rput(4.1, -1.7){\includegraphics[height=0.25in]{identity_web.pdf}}
 \rput(1.7, -1.6){\includegraphics[height=0.25in]{morphism_f.pdf}} 
 \end{pspicture} \right )
 &:& \begin{cases} 1 \to - [1 \otimes X + (h -X) \otimes 1 - h 1 \otimes 1 ] \\ X \to - [(h -X) \otimes X + a 1 \otimes 1]
 \end{cases}\\
  \mathcal{T}\left (\psset{xunit=.22cm,yunit=.22cm}
   \begin{pspicture}(5,4)
 i \rput(1.7, 0.6){\includegraphics[height=0.25in]{identity_web.pdf}}
 \rput(4.1, 0.6){\includegraphics[height=0.25in]{cozipper.pdf}} 
\rput(2.9, 2.9){\includegraphics[height=0.25in]{singcomult.pdf}}
 \rput(1.7, -1.7){\includegraphics[height=0.25in]{identity_web.pdf}}
 \rput(4.1, -1.6){\includegraphics[height=0.25in]{morphism_f.pdf}} 
 \end{pspicture} \right )
 &:& \begin{cases} 1 \to - [1 \otimes (h - X) + X \otimes 1 - h 1 \otimes 1 ] \\ X \to - [X \otimes (h - X) + a 1 \otimes 1].
 \end{cases}
 \end{eqnarray*}
  
We add minus signs to some maps to make each square face of $\mathcal{T}(\overline{\mathcal{U}})$ anti-commutes; an intrinsic way to do this can be found in~\cite[Section 2.7]{BN1}.  

Finally, we form the total complex $\mathcal{C}(D)$ of the anti-commutative cube $\mathcal{T}(\overline{\mathcal{U}}),$ in such a way that its first non-zero term $\mathcal{T}(\Gamma_{\emptyset}) \{2n_{+} - n_{-} \}$ is placed in cohomological degree $-n_{+}.$  The complex $\mathcal{C}(D)$ is non-zero in cohomological degrees between $-n_{+}$ and $n_{-};$ the cochain object $\mathcal{C}^{r - n_{+}}(D)$ is the direct sum of all $R$-modules decorating the vertices of the cube $\mathcal{T}(\overline{U})$ with height  $r.$ The complex $\mathcal{C}(D)$ is well defined up to isomorphisms; specifically, it is independent of the ordering of crossings in $D$, and of the numbering of components of any resolution.

Let $\mbox{Kom}(\textbf{R-Mod})$ be the category of complexes over \textbf{R-Mod}, and denote by $K_{\textbf{R}}: = \mbox{Kom}_{/h}(\textbf{R-Mod})$ its homotopy subcategory. Two chain complexes in $\mbox{Kom}(\textbf{R-Mod})$ are homotopy equivalent if they are isomorphic in $K_{\textbf{R}}.$  
 
 \begin{theorem}
 If $D$ and $D'$ are oriented link diagrams that are related by a Reidemeister move, then the complexes $\mathcal{C}(D)$ and $\mathcal{C}(D')$ are homotopy equivalent. That is, $\mathcal{C}(D)$ and $\mathcal{C}(D')$ are isomorphic in the category $K_{\textbf{R}}.$
 \end{theorem}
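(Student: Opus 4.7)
The strategy is to reduce the theorem to the already-established Reidemeister invariance of the universal dot-free $sl(2)$ foam cohomology (see~\cite{CC1, CC2}) by identifying $\mathcal{C}(D)$, up to isomorphism in $\mbox{Kom}(\textbf{R-Mod})$, with $\mathcal{F}([D])$, where $\mathcal{F}\co\textbf{Foam}_{/\tilde{\ell}}\to\textbf{R-Mod}$ is the tautological functor reviewed in Section~\ref{sec:univKh}. Once such an identification is in hand, the known homotopy equivalence $\mathcal{F}([D])\simeq \mathcal{F}([D'])$ transfers immediately to $\mathcal{C}(D)\simeq \mathcal{C}(D')$.

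First I would compare the cubes. The web skein relations~\eqref{skein_rembv} used in the foam construction are mirrored at the topological level by the isomorphisms~\eqref{isomorphisms} in $\textbf{Foam}_{/\tilde{\ell}}$, which are precisely the imposed relations~\eqref{imposed-relation1}-\eqref{imposed-relation2} defining $\textbf{eSing-2Cob}$ (see Remark~\ref{rem2}). Hence the resolution-simplification step that replaces each $\Gamma_J$ by $\overline{\Gamma}_J$ is, at the level of formal complexes, an invertible chain map in $\mbox{Kom}(\textbf{eSing-2Cob})$. Moreover, by inspection, the six families of saddle cobordisms in~\eqref{eq:cobordisms in U}-\eqref{eq:cobordisms in U-bis} are exactly the morphisms in $\textbf{eSing-2Cob}$ obtained from the usual foam saddles after applying~\eqref{imposed-relation1}-\eqref{imposed-relation2} to simplify their boundary. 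Thus the cube $\overline{\mathcal{U}}$ represents $[D]$ in $\mbox{Kom}(\textbf{eSing-2Cob})$ up to a canonical invertible cochain map.

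Next I would compare the functors. By Proposition~\ref{prop:functor T and local relations}, $\mathcal{T}$ satisfies all the local relations $\tilde{\ell} = $(S, T, UFO, G2, SF), and by the observation immediately following its proof, $\mathcal{T}$ also satisfies the cutting-neck relation (CN). Consequently $\mathcal{T}$ descends from $\textbf{eSing-2Cob}$ to a functor that agrees with $\mathcal{F}$ on every generating morphism (note $\mathcal{T}(\raisebox{-3pt}{\includegraphics[height=0.15in]{circle.pdf}})=\mathcal{A}_C\cong R\{1\}\oplus R\{-1\}$ via Proposition~\ref{prop:isom_circle} and $\mathcal{T}(\raisebox{-3pt}{\includegraphics[height=0.15in]{singcircle.pdf}})=\mathcal{A}_W\cong R\{1\}\oplus R\{-1\}$ via Proposition~\ref{prop:isom_web}, matching the Khovanov--Bar-Natan delooping used to compute $\mathcal{F}$). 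Combining this with the previous paragraph yields a degree-preserving isomorphism of cochain complexes $\mathcal{C}(D)\cong\mathcal{F}([D])$ in $\mbox{Kom}(\textbf{R-Mod})$, functorial in the formal chain maps one obtains from the topological world.

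Finally, for each Reidemeister move the foam theory produces, at the topological level in $\mbox{Kom}(\textbf{Foam}_{/\tilde{\ell}})$, explicit chain maps $[D]\to[D']$ and $[D']\to[D]$ together with chain homotopies to the identity, whose construction relies only on the relations in $\tilde{\ell}$. Pulling these back through the identification of the previous paragraph and applying $\mathcal{T}$ gives the desired homotopy equivalence $\mathcal{C}(D)\simeq\mathcal{C}(D')$ in $K_{\textbf{R}}$. The main obstacle in this plan is the bookkeeping in the identification of cubes: one must check carefully that the seven saddle-type generators in~\eqref{eq:cobordisms in U}-\eqref{eq:cobordisms in U-bis} (including the $\pm i$ factors and the twisting cobordisms $f$ and $g$) reproduce, under the delooping isomorphisms $\alpha_C,\beta_C,\alpha_W,\beta_W$, precisely the edge maps of the foam cube, so that anti-commutativity of squares in $\mathcal{T}(\overline{\mathcal{U}})$ matches that of the foam cube. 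The tedious but routine verification is exactly what is needed to conclude, since once the identification $\mathcal{C}(D)\cong\mathcal{F}([D])$ is established, the invariance is imported from~\cite{CC1, CC2}.
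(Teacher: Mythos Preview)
Your approach is genuinely different from the paper's. The paper gives a self-contained, move-by-move verification: for each of Reidemeister I, IIa, IIb, III it writes out $\mathcal{C}(D)$ explicitly, applies the delooping isomorphisms of Propositions~\ref{prop:isom_circle} and~\ref{prop:isom_web} to split off acyclic direct summands (Gaussian elimination), and identifies what remains with $\mathcal{C}(D')$. You instead propose to bypass all of this by exhibiting a chain isomorphism $\mathcal{C}(D)\cong\mathcal{F}([D])$ and importing invariance from~\cite{CC1,CC2}. The paper in fact asserts exactly this identification in its ``Final conclusion,'' so your route is legitimate and more economical in principle; the paper's route buys self-containment and avoids having to make the comparison with the tautological functor precise.

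That said, two steps in your plan carry more weight than you suggest and are not quite ``routine bookkeeping.'' First, the claim that the saddle cobordisms in~\eqref{eq:cobordisms in U}--\eqref{eq:cobordisms in U-bis} are \emph{exactly} what the foam saddles of~\cite{CC1} become after conjugating by the vertex-removal isomorphisms~\eqref{isomorphisms} is a computation in $\textbf{Foam}_{/\tilde{\ell}}$, not in $\textbf{eSing-2Cob}$, and the factors of $i$ and $-1$ must be matched case by case. Second, Proposition~\ref{prop:functor T and local relations} only shows that $\mathcal{T}$ satisfies the relations $\tilde{\ell}$; it does not say $\mathcal{T}$ coincides with the tautological functor $\mathcal{F}$. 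For that you must produce, for each simplified resolution $\overline{\Gamma}$, a natural isomorphism $\mathcal{F}(\overline{\Gamma})=\Hom_{\textbf{Foam}_{/\tilde{\ell}}}(\emptyset,\overline{\Gamma})\cong\mathcal{T}(\overline{\Gamma})$ and check that it intertwines the images of the generating cobordisms (zippers, cozippers, (co)multiplications). This is where the real content lies; once it is done your argument goes through.
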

 
 \begin{proof}
 \textit{Reidemeister\, I}.
Consider diagrams $D_1$ and $D'_1$ that differ only in a circular region as shown below.
$$D_1= \raisebox{-15pt}{\includegraphics[height=0.5in]{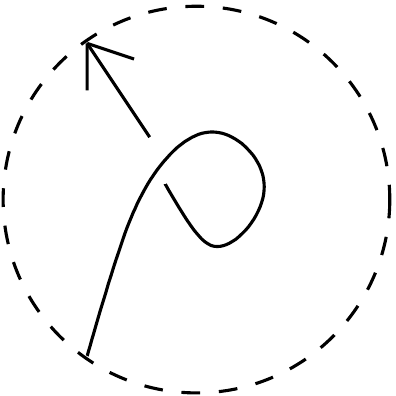}}\qquad
D'_1=\raisebox{-15pt}{\includegraphics[height=0.5in]{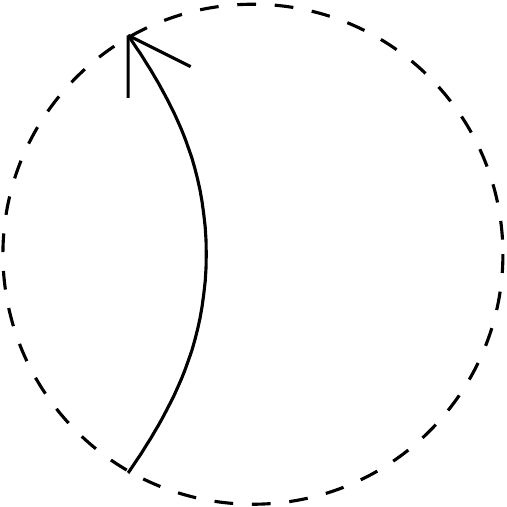}}$$

The chain complex $\mathcal{C}(D_1)$ associated to the diagram $D_1$ has the form

\[\mathcal{C}(D_1): \quad 0 \longrightarrow \left[ \mathcal{T} \left( \raisebox{-8pt}{\includegraphics[height=0.3 in]{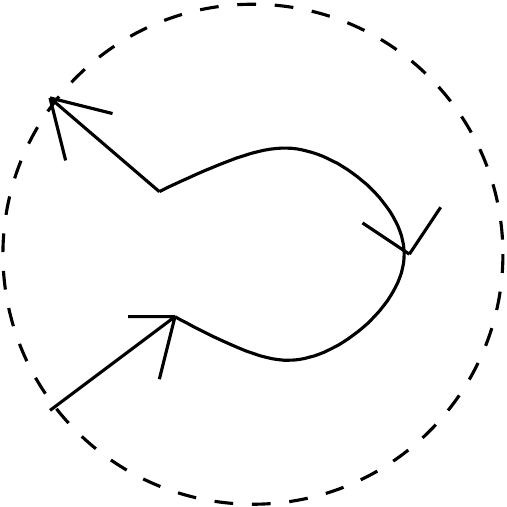}}\right) \{2\} \right] \stackrel{d}{\longrightarrow} \underline{ \left[\mathcal{T} \left (\raisebox{-8pt} {\includegraphics[height=0.3in]{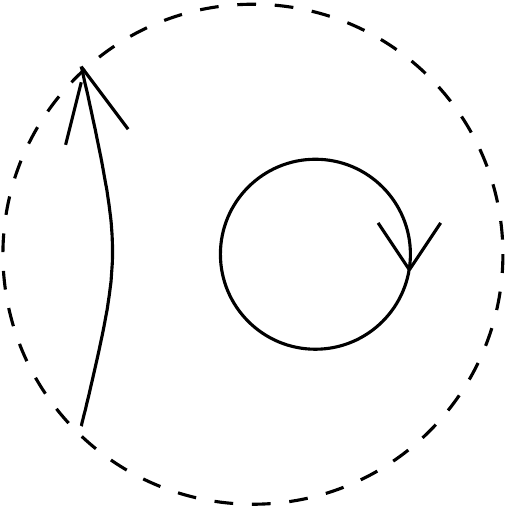}} \right ) \{1\} \right]} \longrightarrow 0, \]
where the underlined object is at cohomological degree 0.
Depending on the shape of $D_1$ (and $D'_1$) outside the circular region, the differential $d$ is the tensor product of 
\[ \mathcal{T} \left(- \psset{xunit=.22cm,yunit=.22cm} \begin{pspicture}(5,4)
\rput(3.6, -1.6){\includegraphics[height=0.25in]{identity_circle.pdf}}
 \rput(1.2, -1.6){\includegraphics[height=0.25in]{morphism_f.pdf}} 
 \rput(2.4, 2.8){\includegraphics[height=0.25in]{cozipper.pdf}}
\rput(2.4, 0.6){\includegraphics[height=0.25in]{comult.pdf}}
 \end{pspicture} \right)\quad \mbox{or} \quad  \mathcal{T} \left (i \psset{xunit=.2cm,yunit=.1cm}  \begin{pspicture}(5,8)
  \rput(3.8, -3.2){\includegraphics[height=0.25in]{cozipper.pdf}}
 \rput(1.2, -3.2){\includegraphics[height=0.25in]{identity_web.pdf}} 
 \rput(2.5, 6.9){\includegraphics[height=0.25in]{identity_web.pdf}}
 \rput(2.5, 1.9){\includegraphics[height=0.25in]{singcomult.pdf}}
 \end{pspicture} \right)\]
  with a finite number of identity maps $\id_{\mathcal{A}_C}$ and $\id_{\mathcal{A}_W}$
 (since diagrams $D_1$ and $D'_1$ are identical outside the circular region). For simplicity, we will omit these identity maps.
 The complex $\mathcal{C}(D_1)$ is isomorphic in $K_{\textbf{R}}$ to the complex
 
\[ 0 \longrightarrow \left[ \mathcal{T} \left( \raisebox{-8pt}{\includegraphics[height=0.3 in]{reid1-3.pdf}}\right) \{2\} \right ] \stackrel{
\left(\begin{array}{c}(\id_{\mathcal{A_C}} \otimes\, \alpha_{C,1}) \circ d \\ (\id_{\mathcal{A}_C} \otimes \,\alpha_{C,2}) \circ d \end{array} \right) }
{\longrightarrow} \underline{ \left[
\begin{array}{c}
\mathcal{T} \left (\raisebox{-8pt} {\includegraphics[height=0.3in]{reid1-1.pdf}} \right ) \{2\} \\
 \mathcal{T} \left (\raisebox{-8pt} {\includegraphics[height=0.3in]{reid1-1.pdf}} \right ) \{0\}
 \end{array} \right]}\longrightarrow 0, \]
where the maps $\alpha_{C,1}$ and $\alpha_{C,2}$ are the components of the isomorphism $\alpha_C$ given in Proposition~\ref{prop:isom_circle}.

This complex decomposes into the direct sum of the complexes
\begin{align*}
0 & \longrightarrow \left[ \mathcal{T} \left( \raisebox{-8pt}{\includegraphics[height=0.3 in]{reid1-3.pdf}}\right) \{2\} \right] \stackrel{(\id_{\mathcal{A}_C}\otimes\, \alpha_{C,1}) \circ d}{\longrightarrow}  \underline{\left[ \mathcal{T} \left (\raisebox{-8pt} {\includegraphics[height=0.3in]{reid1-1.pdf}} \right ) \{2\}\right]}
 \longrightarrow 0 \\
\mathcal{C}(D'_1) : \quad 0 & \longrightarrow  \underline{ \left[\mathcal{T} \left( \raisebox{-8pt} {\includegraphics[height=0.3in]{reid1-1.pdf}}\right) \right]} \longrightarrow 0.
\end{align*}
The morphism $(\id_{\mathcal{A}_C} \otimes \,\alpha_{C,1}) \circ d$ is equal to 
\[ - \mathcal{T} \left(\, \left(\raisebox{-8pt}{\includegraphics[height=0.25 in]{identity_circle.pdf}}   \,\,\raisebox{1pt}{\includegraphics[height=0.12 in]{counit.pdf}}\right)\, \circ \psset{xunit=.22cm,yunit=.22cm} \begin{pspicture}(5,4)
\rput(3.6, -1.6){\includegraphics[height=0.25in]{identity_circle.pdf}}
 \rput(1.2, -1.6){\includegraphics[height=0.25in]{morphism_f.pdf}} 
 \rput(2.4, 2.8){\includegraphics[height=0.25in]{cozipper.pdf}}
\rput(2.4, 0.6){\includegraphics[height=0.25in]{comult.pdf}}
 \end{pspicture} \right) = -  \mathcal{T} \left( \,
 \psset{xunit=.22cm,yunit=.22cm} \begin{pspicture}(2,3)
\rput(1, 1.5) {\includegraphics[height=0.25in]{cozipper.pdf}}
 \rput(1, -0.7){\includegraphics[height=0.25in]{morphism_f.pdf}} 
 \end{pspicture}
\, \right)\,\, \mbox{or} \,\, i \, \mathcal{T} \left(\,\left( \raisebox{-8pt}{\includegraphics[height=0.23 in]{identity_web.pdf}} \,\, \raisebox{-1pt}{\includegraphics[height=0.12 in]{counit.pdf}}\right)\, \circ \psset{xunit=.2cm,yunit=.1cm}  \begin{pspicture}(5,9)
  \rput(3.8, -3.2){\includegraphics[height=0.25in]{cozipper.pdf}}
 \rput(1.2, -3.2){\includegraphics[height=0.25in]{identity_web.pdf}} 
 \rput(2.5, 6.9){\includegraphics[height=0.25in]{identity_web.pdf}}
 \rput(2.5, 1.9){\includegraphics[height=0.25in]{singcomult.pdf}}
 \end{pspicture} \right) = i\, \mathcal{T}\left( \psset{xunit=.22cm,yunit=.22cm}
 \begin{pspicture}(2,2)
  \rput(1, 0.5){\includegraphics[height=0.25in]{identity_web.pdf}}
   \end{pspicture} 
   \right),\] and, in either case, it is an isomorphism. Thus the first complex in the direct sum above is acyclic, and $\mathcal{C}(D_1)$ and $\mathcal{C}(D'_1)$ are isomorphic in $K_{\textbf{R}}.$
   
The invariance under the type I move containing a negative crossing can be verified similarly.

\textit{Reidemeister\, IIa}.
Consider diagrams $D_{2a}$ and $D'_{2a}$ that differ in a circular region, as represented below.
$$D_{2a}=\raisebox{-18pt}{\includegraphics[height=0.5in]{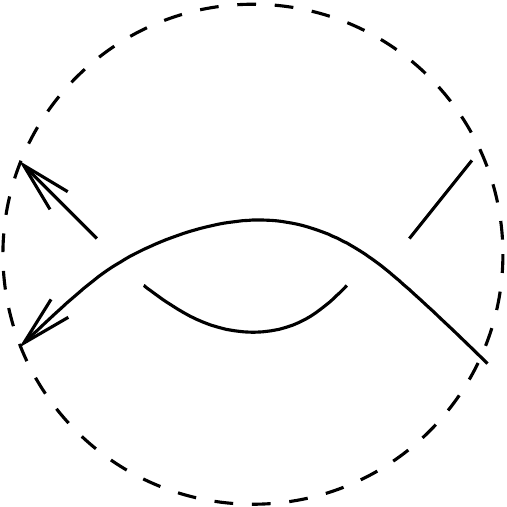}}\qquad
D'_{2a}=\raisebox{-18pt}{\includegraphics[height=0.5in]{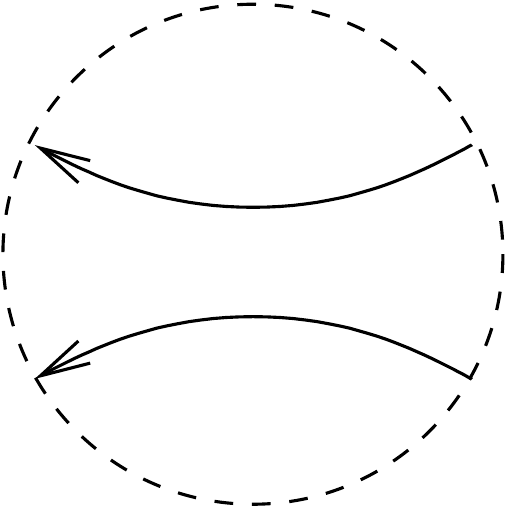}}\ $$
The chain complex $\mathcal{C}(D_{2a})$ associated to the diagram $D_{2a}$ has the form:
\[ 0 \rightarrow \left [ \mathcal{T} \left (\raisebox{-8pt} {\includegraphics[height=0.35in]{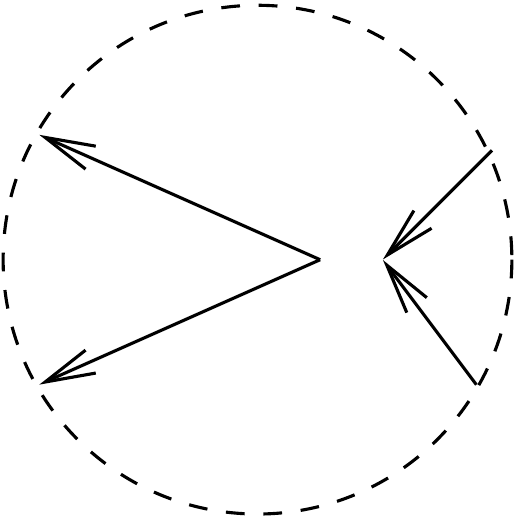}} \right )\{1\} \right] \stackrel{\left( \begin{array}{c} d^{-1}_1 \\ d^{-1}_2 \end{array} \right)}{\longrightarrow} \underline{\left[
\begin{array}{c}
\mathcal{T} \left (\raisebox{-8pt} {\includegraphics[height=0.35in]{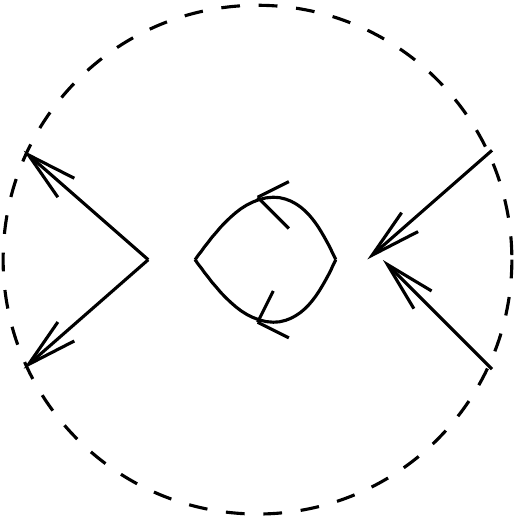}}\right ) \\
 \mathcal{T} \left (\raisebox{-8pt} {\includegraphics[height=0.35in]{twoarcs.pdf}}\right )
 \end{array} \right] }
\stackrel{\left(\begin{array}{c}d^0_1\\ d^0_2
\end{array}\right)^t} {\longrightarrow} \left [\mathcal{T} \left( \raisebox{-8pt} {\includegraphics[height=0.35in]{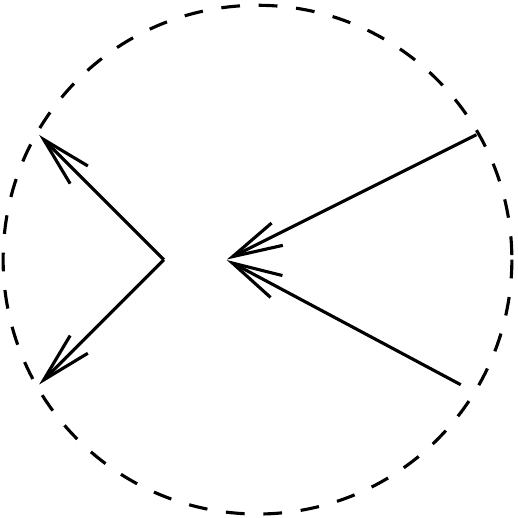}}\right )\{-1\} \right ] \rightarrow 0.\]
Using the result of Proposition~\ref{prop:isom_web}, this complex is isomorphic to the complex:
 
 \[0 \longrightarrow \left [ \mathcal{T} \left (\raisebox{-8pt} {\includegraphics[height=0.35in]{reid2a-1.pdf}} \right )\{1\} \right] \stackrel{\left( \begin{array}{c}(\id_{\mathcal{A}_W}\otimes\,\alpha_{W,1}) \circ d^{-1}_1 \\(\id_{\mathcal{A}_W}\otimes\,\alpha_{W,2}) \circ d^{-1}_1 \\ d^{-1}_2 \end{array} \right)}{\longrightarrow} \underline{\left[
\begin{array}{c}
\mathcal{T} \left (\raisebox{-8pt} {\includegraphics[height=0.35in]{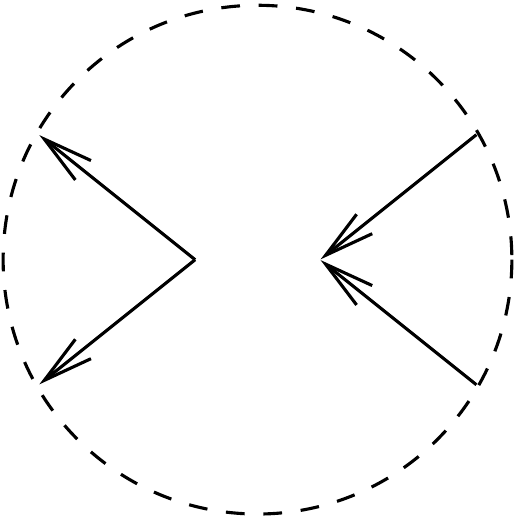}}\right )\{1\} \\
\mathcal{T} \left (\raisebox{-8pt} {\includegraphics[height=0.35in]{reid2a-4.pdf}}\right ) \{-1\}\\
 \mathcal{T} \left (\raisebox{-8pt} {\includegraphics[height=0.35in]{twoarcs.pdf}}\right )\{0\}
 \end{array} \right] }\longrightarrow\]
\[\stackrel{\left( \begin{array}{c} d^0_1 \circ (\id_{\mathcal{A}_W} \otimes\,\beta_{W,1})\\ d^0_1 \circ(\id_{\mathcal{A}_W} \otimes \, \beta_{W,2})\\ d^02 \end{array} \right)^t}{\longrightarrow} \left [\mathcal{T} \left( \raisebox{-8pt} {\includegraphics[height=0.35in]{reid2a-3.pdf}}\right )\{-1\} \right ] \longrightarrow 0,\]
 which decomposes into the following three complexes:
 \begin{align*}
 0& \longrightarrow \left [ \mathcal{T} \left (\raisebox{-8pt} {\includegraphics[height=0.35in]{reid2a-1.pdf}} \right )\{1\} \right]\stackrel{(\id_{\mathcal{A}_W}\otimes\,\alpha_{W,1}) \circ d^{-1}_1} {\longrightarrow}  \underline{\left[ \mathcal{T} \left (\raisebox{-8pt} {\includegraphics[height=0.35in]{reid2a-4.pdf}}\right )\{1\} \right]} \longrightarrow 0 \\
 0 & \longrightarrow  \underline{ \left[\mathcal{T} \left (\raisebox{-8pt} {\includegraphics[height=0.35in]{reid2a-4.pdf}}\right )\{-1\}  \right]} \stackrel{d^{0}_1\circ (\id_{\mathcal{A}_W} \otimes \beta_{W,2})} {\longrightarrow} \left [\mathcal{T} \left( \raisebox{-8pt} {\includegraphics[height=0.35in]{reid2a-3.pdf}}\right )\{-1\} \right ] \longrightarrow 0\\
\mathcal{C}(D'_{2a}): \quad 0& \longrightarrow  \underline{ \left[\mathcal{T} \left (\raisebox{-8pt} {\includegraphics[height=0.35in]{twoarcs.pdf}}\right )\right]} \longrightarrow 0,
 \end{align*}
 where $\alpha_{W,1}, \alpha_{W,2}$ and $\beta_{W,1}, \beta_{W,2}$ are the components of the isomorphisms $\alpha_W$ and $\beta_W$ appearing in Proposition~\ref{prop:isom_web}.
We have that:
 
\[d^{-1}_1 = \mathcal{T}\left( \raisebox{-5pt}{\includegraphics[height=0.25in]{singcomult.pdf}}\right)\quad  \mbox{and} \quad
 d^0_1 = \mathcal{T} \left( \,
 \raisebox{-6pt}{\includegraphics[height=0.25in]{singmult.pdf}}\,
 \right),\]
 \[ (\id_{\mathcal{A}_W}\otimes\,\alpha_{W,1}) \circ d^{-1}_1 = 
 \mathcal{T} \left(\left( \raisebox{-8pt}{\includegraphics[height=0.23 in]{identity_web.pdf}} \,\, \raisebox{-1pt}{\includegraphics[height=0.12 in]{singcounit.pdf}}\right)\, \circ
\raisebox{-8pt}{\includegraphics[height=0.25in]{singcomult.pdf}} 
   \right) = \mathcal{T}\left( 
  \raisebox{-5pt}{\includegraphics[height=0.25in]{identity_web.pdf}}
   \right),\]
    \[d^{0}_1\circ (\id_{\mathcal{A}_W} \otimes \beta_{W,2}) = \mathcal{T} \left(  \raisebox{-5pt}{\includegraphics[height=0.25in]{singmult.pdf}}
\circ \left(i\, \raisebox{-2pt}{\includegraphics[height=0.23 in]{identity_web.pdf}} \,\, \raisebox{-2pt}{\includegraphics[height=0.12 in]{singunit.pdf}}\right)
     \right) = i\,
     \mathcal{T}\left(
  \raisebox{-4pt}{\includegraphics[height=0.25in]{identity_web.pdf}}
   \right).\]
Thus $(\id_{\mathcal{A}_W}\otimes\,\alpha_{W,1}) \circ d^{-1}_1$ and $d^0_1 \circ (\id_{\mathcal{A}_W}\otimes\, \beta_{W,2})$ are isomorphisms,  and consequently, the first two complexes above are acyclic. (Note that all differentials above are tensored with a finite number of identity maps $\id_{\mathcal{A}_C}$ and $\id_{\mathcal{A}_W}$.) Moreover, the last complex corresponds to the diagram $D'_{2a}.$ Therefore, $\mathcal{C}(D_{2a})$ and $\mathcal{C}(D'_{2a})$ are isomorphic in $K_{\textbf{R}}.$

 \textit{Reidemeister\, IIb}. Consider now diagrams $D_{2b}$ and $D'_{2b}$ that differ in a circular region as shown below.
$$D_{2b}=\raisebox{-18pt}{\includegraphics[height=0.5in]{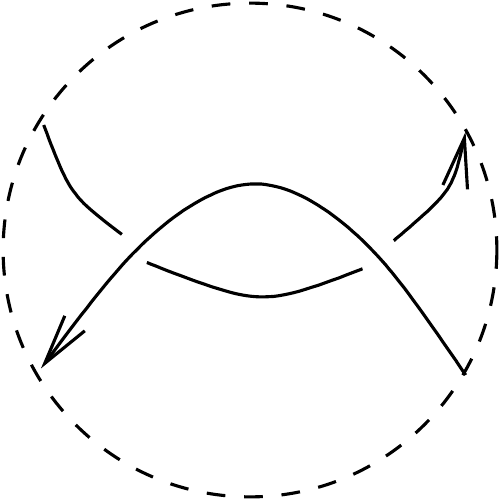}}\qquad
D'_{2b}=\raisebox{-18pt}{\includegraphics[height=0.5in]{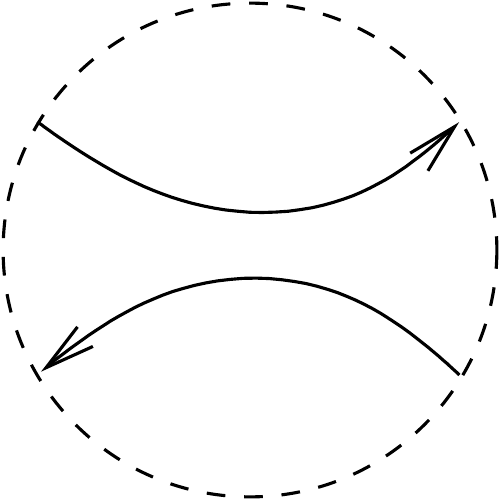}}\ $$
The chain complex associated to $D_{2b}$ has the form:
 \[0 \longrightarrow \left[ \mathcal{T} \left(\raisebox{-8pt} {\includegraphics[height=0.35in]{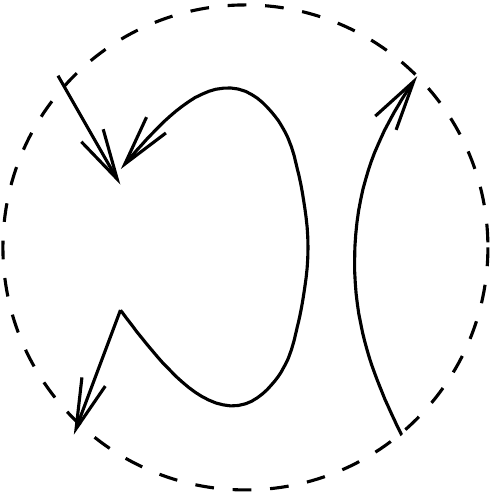}}\right)\{1\} \right] \longrightarrow \underline{ \left[ \begin{array}{c} \mathcal{T} \left(\raisebox{-8pt} {\includegraphics[height=0.35in]{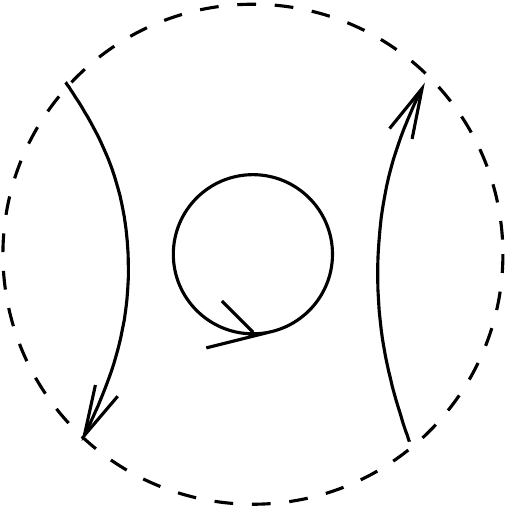}}\right ) \\ \mathcal{T} \left(\raisebox{-8pt} {\includegraphics[height=0.35in]{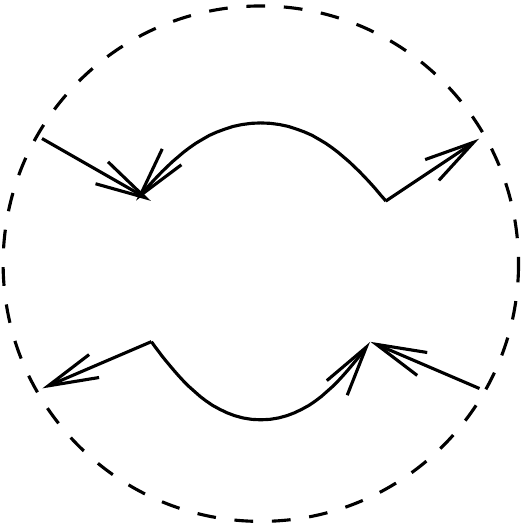}}\right) \end{array}\right]} \longrightarrow \left[ \mathcal{T} \left(\raisebox{-8pt} {\includegraphics[height=0.35in]{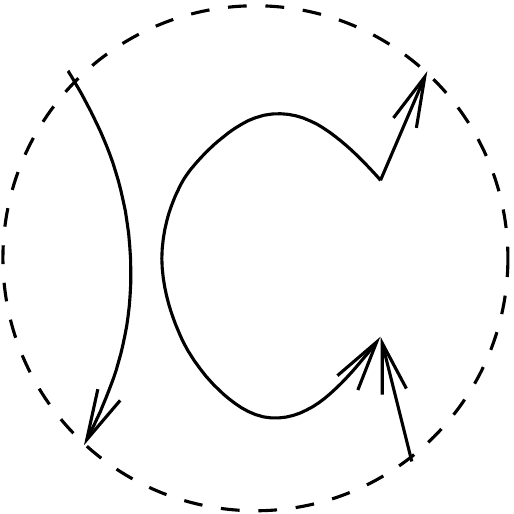}}\right)\{-1\}\right] \longrightarrow 0.\]
 As in the proof of invariance under Reidemeister IIa move, the complex $\mathcal{C}(D_{2b})$ is isomorphic (via Proposition~\ref{prop:isom_circle}) to the complex which is the direct sum of the following three complexes:

\begin{align*}
0 & \longrightarrow \left[ \mathcal{T} \left(\raisebox{-8pt} {\includegraphics[height=0.35in]{reid2b-1.pdf}}\right)\{1\} \right] \longrightarrow \underline{ \left[\mathcal{T} \left( \raisebox{-8pt} {\includegraphics[height=0.35in]{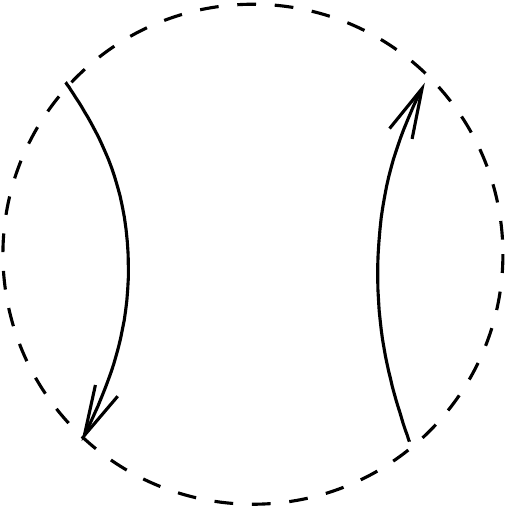}}\right) \{1\} \right]} \longrightarrow 0\\
0 &\longrightarrow \underline{ \left[\mathcal{T} \left( \raisebox{-8pt} {\includegraphics[height=0.35in]{reid2b-5.pdf}}\right) \{-1\} \right]} \longrightarrow \left[ \mathcal{T} \left(\raisebox{-8pt} {\includegraphics[height=0.35in]{reid2b-4.pdf}}\right)\{-1\}\right] \longrightarrow 0\\
0 & \longrightarrow  \underline{ \left[\mathcal{T} \left( \raisebox{-8pt} {\includegraphics[height=0.35in]{reid2b-3.pdf}}\right) \{-1\} \right]} \longrightarrow 0.
\end{align*}
The first two complexes are acyclic, and the last one is isomorphic to $\mathcal{C}(D'_{2b}).$ Therefore, $\mathcal{C}(D_{2b})$ and $\mathcal{C}(D'_{2b})$ are isomorphic in the category $K_{\textbf{R}}.$

\textit{Reidemeister\, III}. We consider diagrams $D_{3}$ and $D'_{3}$ that differ in a circular region as depicted below.
$$D_{3}=\raisebox{-17pt}{\includegraphics[height=0.5in]{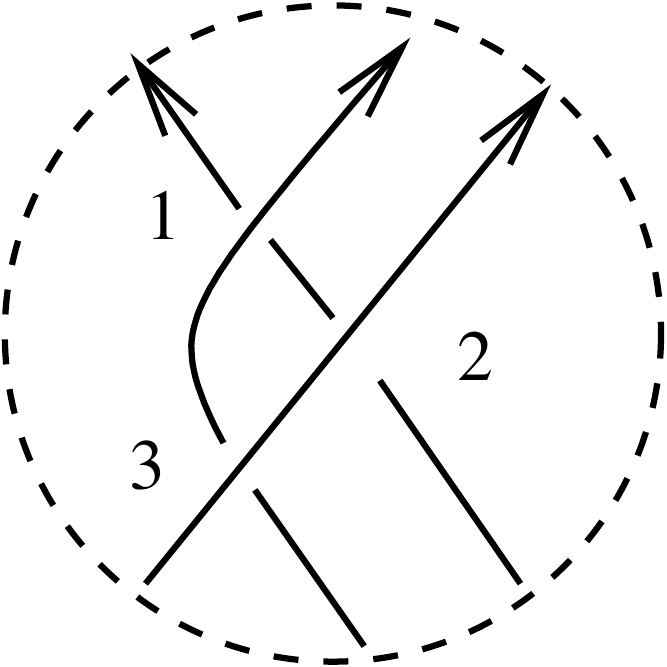}}\qquad
D'_{3}=\raisebox{-17pt}{\includegraphics[height=0.5in]{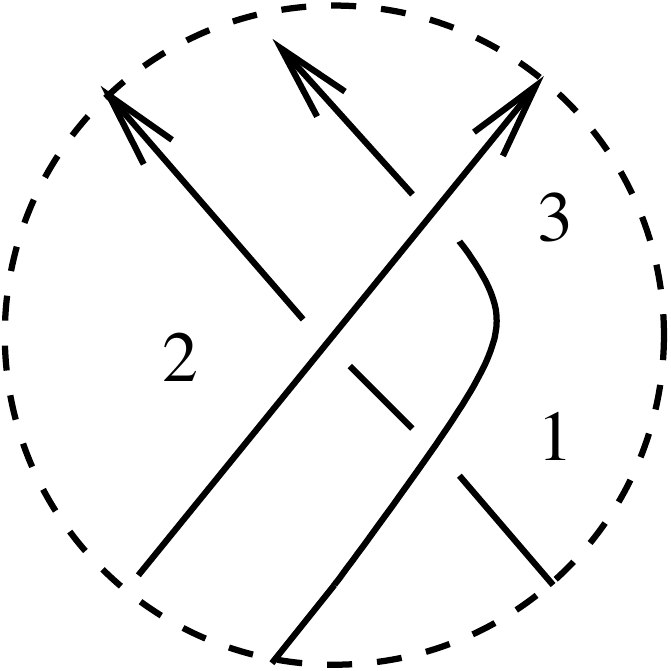}}\ $$

The cube of (simplified) resolutions corresponding to the diagram $D_3$ is given in Figure~\ref{fig:reidIII-left}, and that corresponding to $D'_3$ in Figure~\ref{fig:reidIII-right}. Note that the resolutions $\overline{\Gamma}_{000}$ and $\overline{\Gamma'}_{000}$ are obtained by simplifying (or removing adjacent pairs of vertices of the same type in) the original resolutions 

\[ \Gamma_{000} = \raisebox{-20pt}{\includegraphics[height=0.6in]{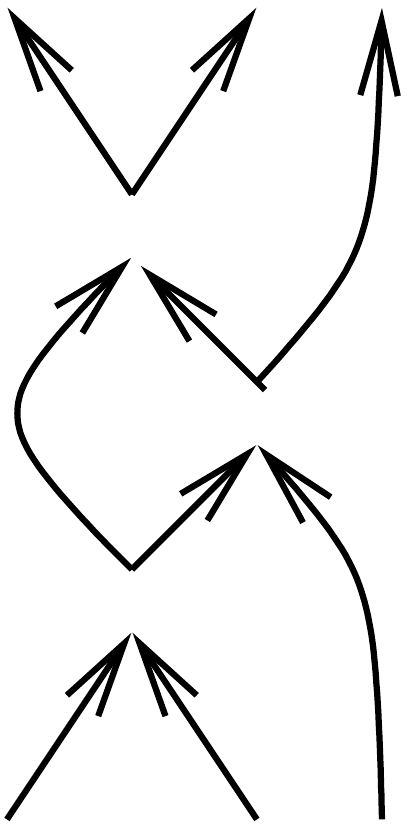}} \quad \text{and} \quad\Gamma'_{000} =  \raisebox{-20pt}{\includegraphics[height=0.6in]{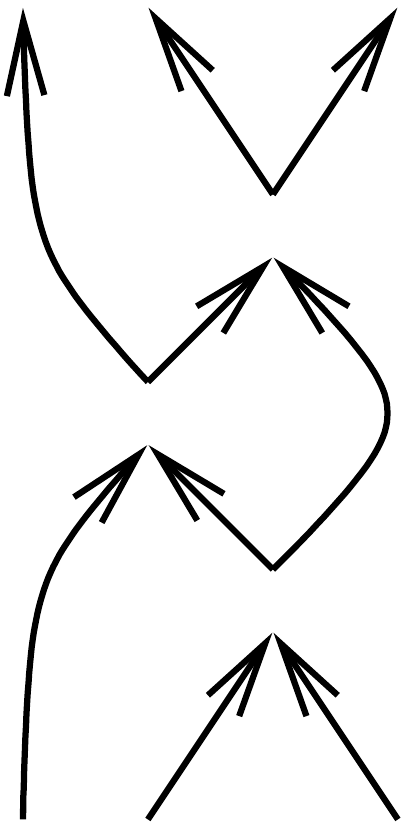}} \]
corresponding to $D_3$ and $D'_3,$ respectively. The reader will notice that we drew a circle at the tail of those edges of the cubes that received an additional minus sign (to make each square face anti-commutes).
\begin{figure}[ht]
\raisebox{-8pt}{\includegraphics[height=3.4in]{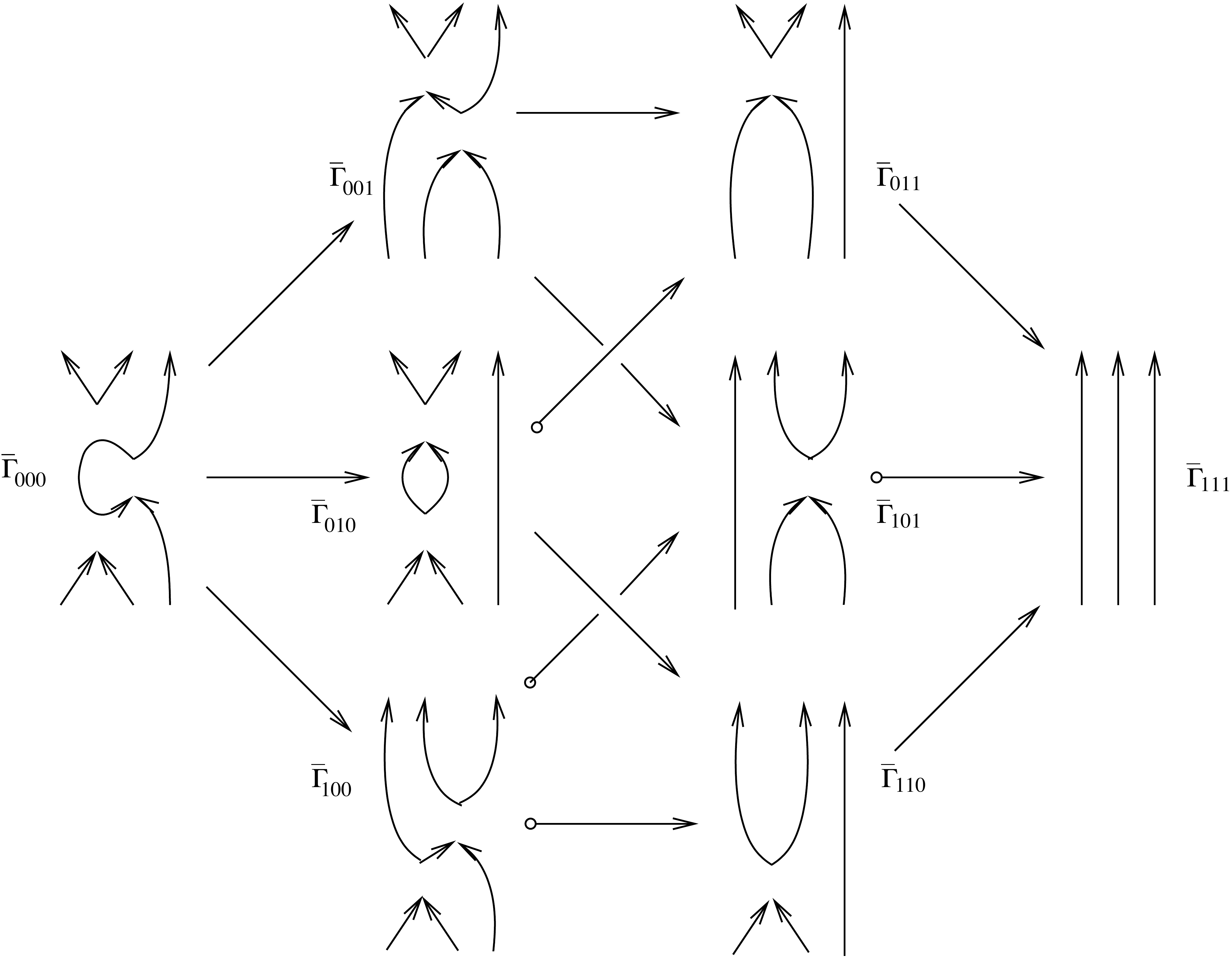}}
\caption{The cube of resolutions of $D_3$}\label{fig:reidIII-left}
\end{figure}

\begin{figure}[ht]
\raisebox{-8pt}{\includegraphics[height=3.4in]{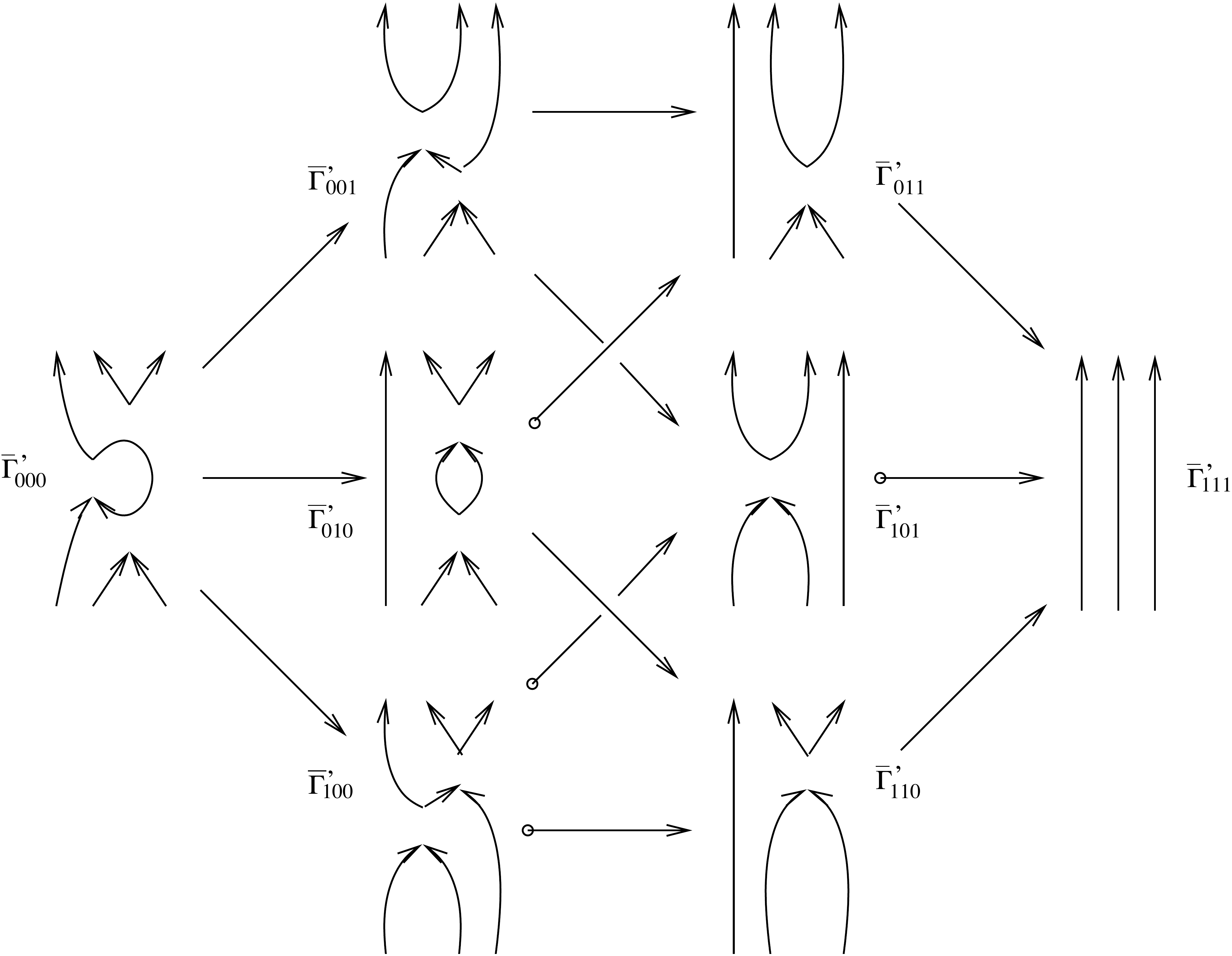}}
\caption{The cube of resolutions of $D'_3$}\label{fig:reidIII-right}
\end{figure}

The complex $C(D_3)$ has the form
\[0 \longrightarrow \left [ \mathcal{T}(\overline{\Gamma}_{000}) \{6\} \right ] \longrightarrow \left [ \begin{array}{c} \mathcal{T}(\overline{\Gamma}_{001})\{5\} \\ \mathcal{T}(\overline{\Gamma}_{010})\{5\} \\ \mathcal{T}(\overline{\Gamma}_{100})\{5\} \end{array}  \right ]\longrightarrow \left [ \begin{array}{c} \mathcal{T}(\overline{\Gamma}_{011})\{4\} \\ \mathcal{T}(\overline{\Gamma}_{101})\{4\} \\ \mathcal{T}(\overline{\Gamma}_{110})\{4\} \end{array}  \right] \longrightarrow \underline {\left [\mathcal{T}(\overline{\Gamma}_{111})\{3\}\right ]} \longrightarrow 0.\]

Using the isomorphism given in Proposition~\ref{prop:isom_web}, we have that
\[ \mathcal{T}(\overline{\Gamma}_{010}) \{5\} \cong \mathcal{T}(\overline{\Gamma}_{000}) \{6\} \oplus \mathcal{T}(\overline{\Gamma}_{110}) \{4\}. \]

Therefore the complex $C(D_3)$ is isomorphic to the complex which is the direct sum  of the  contractible complexes

\[0 \longrightarrow \mathcal{T}(\overline{\Gamma}_{000}) \{6\} \stackrel{\cong}{\longrightarrow} \mathcal{T}(\overline{\Gamma}_{000}) \{6\} \longrightarrow 0 \]
\[0 \longrightarrow \mathcal{T}(\overline{\Gamma}_{110}) \{4\} \stackrel{\cong}{\longrightarrow} \mathcal{T}(\overline{\Gamma}_{110}) \{4\} \longrightarrow 0 \]
and the complex 
\[\mathcal{C}: \quad 0 \longrightarrow \left [ \begin{array}{c} \mathcal{T}(\overline{\Gamma}_{001})\{5\} \\ \mathcal{T}(\overline{\Gamma}_{100})\{5\} \end{array}\right ]  \stackrel{d^{-2}}{\longrightarrow} \left [ \begin{array}{c} \mathcal{T}(\overline{\Gamma}_{011})\{4\} \\ \mathcal{T}(\overline{\Gamma}_{101})\{4\} \end{array}\right] \stackrel{d^{-1}}{\longrightarrow} \underline{ \left [\mathcal{T}(\overline{\Gamma}_{111})\{3\} \right] } \longrightarrow 0.  \]
In other words, $C(D_3)$ and $\mathcal{C}$ are isomorphic in $K_{\textbf{R}}.$

The complex $C(D'_3)$ has a similar form as $C(D_3)$, with the only difference that the resolutions $\overline{\Gamma}_{ijk}$ are replaced by $\overline{\Gamma}'_{ijk}$. Using the fact that, 
\[ \mathcal{T}(\overline{\Gamma}'_{010}) \{5\} \cong \mathcal{T}(\overline{\Gamma}'_{000}) \{6\} \oplus \mathcal{T}(\overline{\Gamma}'_{110}) \{4\},\]
we have that the complex $C(D'_3)$ is isomorphic to the complex 
\[\mathcal{C'}: \quad 0 \longrightarrow \left [ \begin{array}{c} \mathcal{T}(\overline{\Gamma}'_{001})\{5\} \\ \mathcal{T}(\overline{\Gamma}'_{100})\{5\} \end{array}\right ] \stackrel{d'^{-2}}{ \longrightarrow} \left [ \begin{array}{c} \mathcal{T}(\overline{\Gamma}'_{011})\{4\} \\ \mathcal{T}(\overline{\Gamma}'_{101})\{4\} \end{array}\right] \stackrel{d'^{-1}}{\longrightarrow} \underline{ \left[ \mathcal{T}(\overline{\Gamma}'_{111})\{3\} \right ]} \longrightarrow 0, \]
after stripping off the contractible direct summands 
\[0 \longrightarrow \mathcal{T}(\overline{\Gamma}'_{000}) \{6\} \stackrel{\cong}{\longrightarrow} \mathcal{T}(\overline{\Gamma}'_{000}) \{6\} \longrightarrow 0, \]
\[0 \longrightarrow \mathcal{T}(\overline{\Gamma}'_{110}) \{4\} \stackrel{\cong}{\longrightarrow} \mathcal{T}(\overline{\Gamma}'_{110}) \{4\} \longrightarrow 0. \]

Thus, $C(D')$ is isomorphic to $\mathcal{C'}$ in $K_{\textbf{R}}.$
 It remains to show that complexes $\mathcal{C}$ and $\mathcal{C'}$ are isomorphic. We give the resolutions contained in these complexes in Figure~\ref{fig:complex C} and Figure~\ref{fig:complex C'}.

\begin{figure}[ht]
\raisebox{-8pt}{\includegraphics[height=2in]{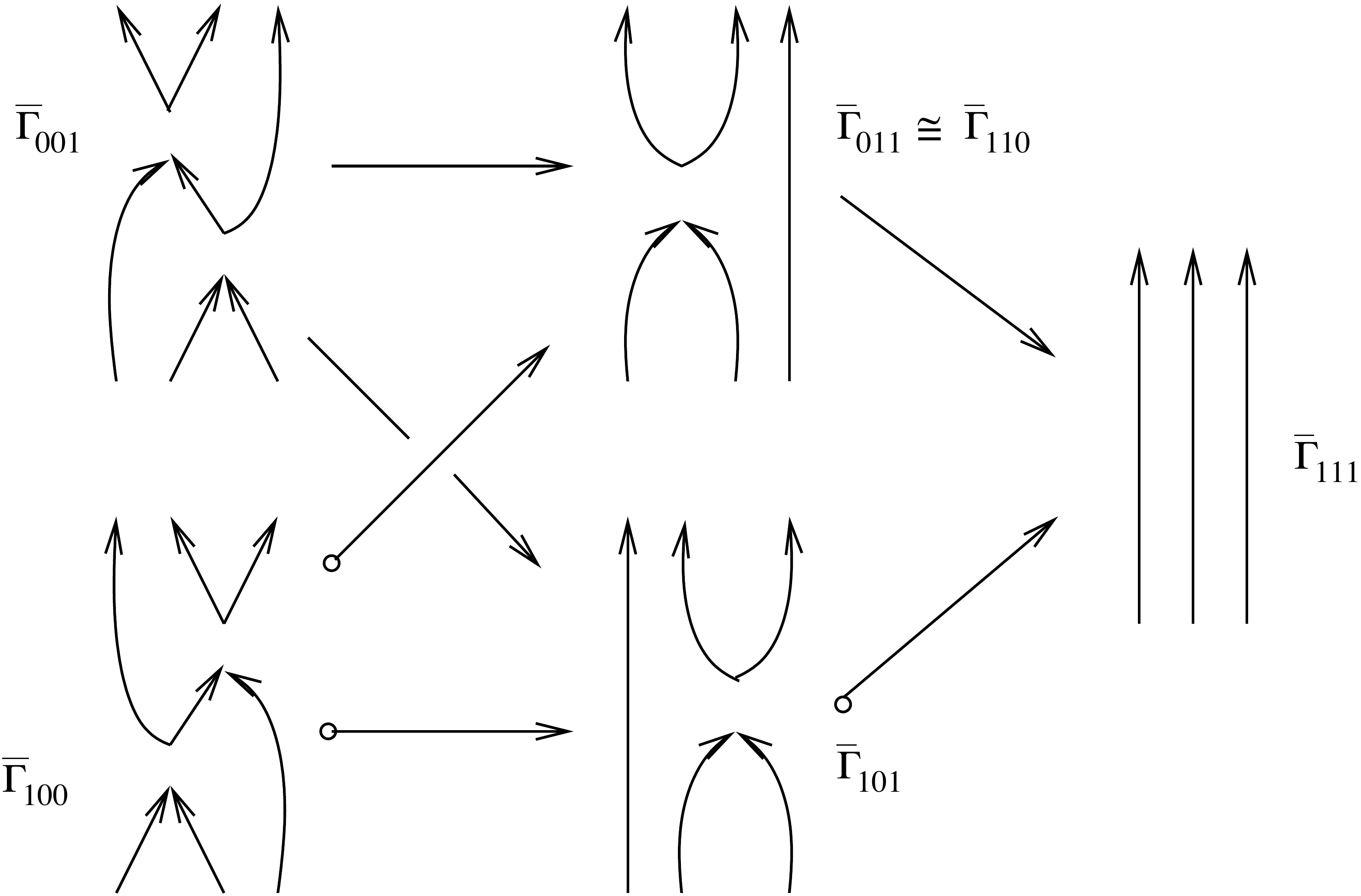}}
\caption{Resolutions forming the complex $C$}\label{fig:complex C}
\end{figure}

\begin{figure}[ht]
\raisebox{-8pt}{\includegraphics[height=2in]{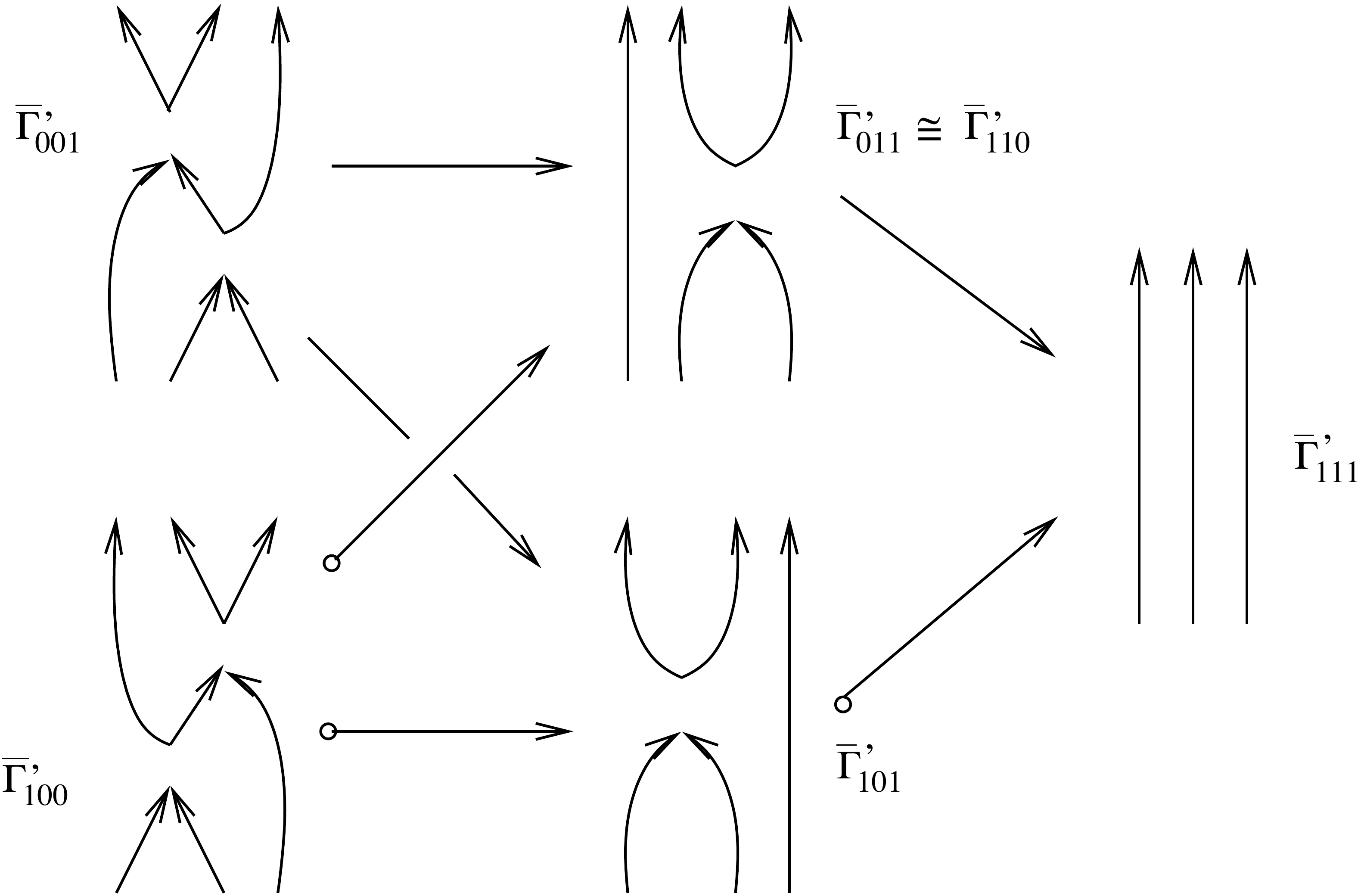}}
\caption{Resolutions forming the complex $C'$}\label{fig:complex C'}
\end{figure}

Denote by $d^{-2} = \left ( \begin{array}{cc} f_1 & f_3 \\ f_2 & f_4\end{array}\right)$ and $d^{-1} = (f_5, f_6)$ the differentials in $\mathcal{C}.$ Similarly, let $d'^{-2} = \left ( \begin{array}{cc} g_1 & g_3 \\ g_2 & g_4\end{array}\right)$ and $d'^{-1} = (g_5, g_6)$ be the differentials in $\mathcal{C'}.$ Then $f_1 = g_2, f_2 = g_1, f_3 = f_4$ and $f_4 = g_3.$ Moreover, $f_5 = - g_6$ and $f_6 = - g_5.$
 
It is an easy exercise to check that the maps $F \co \mathcal{C} \to \mathcal{C'}$ and $G \co \mathcal{C'} \to \mathcal{C}$ with components 
\[ F^{-2} = G^{-2} =  \left ( \begin{array}{cc} 1 & 0 \\ 0 & 1 \end{array} \right ), \quad F^{-1} = G^{-1} = \left ( \begin{array}{cc} 0 & 1 \\ 1 & 0 \end{array} \right ), \quad F^{0} = G^0 = - \id \]
are chain maps realizing the isomorphism (in $K_{\textbf{R}}$) between complexes $\mathcal{C}$ and $\mathcal{C'}.$ The invariance under the considered version of the type III move follows. 

Since the other oriented versions of the Reidemeister III move can be obtained from the type III move described above and the type II moves, the proof of invariance under the Reidemester moves is complete.
\end{proof}

\begin{corollary} Let $\mathcal{H}(D) = \oplus_{i,j \in \mathbb{Z}}\mathcal{H}^{i,j}(D)$ be the cohomology groups of $\mathcal{C}(D)$. Then the isomorphisms classes of the groups $\mathcal{H}^{i,j}(D)$ are invariants of $L$.
\end{corollary}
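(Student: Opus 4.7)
The plan is to derive the corollary as a direct consequence of the preceding theorem together with Reidemeister's theorem. First I would observe that the complex $\mathcal{C}(D)$ carries two gradings: the cohomological grading $i$ coming from the cube structure (placing $\mathcal{T}(\Gamma_{\emptyset})\{2n_+ - n_-\}$ in degree $-n_+$), and the quantum grading $j$ coming from the grading on $\mathbf{R}\text{-Mod}$ that is preserved by the degree-preserving TQFT $\mathcal{T}$ and shifted appropriately at each vertex. Because the differentials on each edge of the cube are images under $\mathcal{T}$ of degree-preserving singular 2-cobordisms (after the grading shifts by $\{2n_+ - n_- - |J|\}$ are taken into account), the differential in $\mathcal{C}(D)$ preserves the quantum grading $j$. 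Consequently $\mathcal{C}(D)$ decomposes as a direct sum over $j \in \mathbb{Z}$ of ordinary cochain complexes, and $\mathcal{H}^{i,j}(D)$ is well defined as the $i$-th cohomology of the $j$-th summand.

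Next I would invoke Reidemeister's theorem: any two plane diagrams $D$ and $D'$ representing the same oriented link $L$ are connected by a finite sequence of oriented Reidemeister moves (of types I, IIa, IIb, and III, together with planar isotopies). By the preceding theorem, each such move induces a homotopy equivalence $\mathcal{C}(D) \simeq \mathcal{C}(D')$ in $K_{\mathbf{R}} = \mathrm{Kom}_{/h}(\mathbf{R}\text{-Mod})$. Composing these homotopy equivalences along the sequence of moves gives a homotopy equivalence $\mathcal{C}(D) \simeq \mathcal{C}(D')$ for any two diagrams of $L$.

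Finally, I would apply the standard fact that homotopy equivalent cochain complexes have isomorphic cohomology in each degree, with the isomorphism induced by the chain maps exhibiting the homotopy equivalence. Inspection of the chain maps used in the proof of the theorem shows that they are in fact grading-preserving (the shifts $\{1\}$ and $\{-1\}$ cancel correctly, and the maps $\alpha_C, \beta_C, \alpha_W, \beta_W$ from Propositions~\ref{prop:isom_circle} and~\ref{prop:isom_web} are by construction degree-preserving, as is each $F^{k}$ and $G^{k}$ in the Reidemeister III step). Therefore the isomorphism $\mathcal{H}(D) \cong \mathcal{H}(D')$ respects the bigrading, yielding $\mathcal{H}^{i,j}(D) \cong \mathcal{H}^{i,j}(D')$ for all $i,j \in \mathbb{Z}$. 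The main (and only nontrivial) issue is verifying that the homotopy equivalences constructed in the theorem are bigrading-preserving rather than merely preserving the cohomological grading, but this is already implicit in the degree-preserving nature of $\mathcal{T}$ and of all the structure maps appearing in the invariance argument.
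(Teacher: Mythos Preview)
Your proposal is correct and follows the same reasoning the paper intends: the paper states this corollary with no proof, treating it as an immediate consequence of the preceding theorem (homotopy equivalence under Reidemeister moves) together with the standard fact that homotopy-equivalent complexes have isomorphic cohomology. Your added care about the bigrading being preserved is appropriate and implicit in the paper's setup, since $\mathcal{T}$ and all the maps $\alpha_{C,W},\beta_{C,W}$ used in the invariance proof are degree-preserving by construction.
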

The following statement follows at once from construction.
\begin{proposition}
The graded Euler characteristic of $\mathcal{H}(L)$ is the quantum $sl(2)$ polynomial of $L$: 
\[P_2(L) = \sum_{i,j \in \mathbb{Z}}(-1)^iq^j\rk(\mathcal{H}^{i,j}(L)). \]
\end{proposition}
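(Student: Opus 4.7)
The plan is to compute the graded Euler characteristic of the complex $\mathcal{C}(D)$ directly and match it term-by-term with the state sum defining $P_2(L)$. This works because the graded Euler characteristic is invariant under passage to cohomology, so it suffices to prove
\[
P_2(L) = \sum_{i,j} (-1)^i q^j \rk(\mathcal{C}^{i,j}(D)).
\]

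First I would unwind the definition of $\mathcal{C}(D)$. By construction the cochain module in cohomological degree $|J| - n_+$ coming from the resolution $\overline{\Gamma}_J$ is $\mathcal{T}(\overline{\Gamma}_J)\{2n_+ - n_- - |J|\}$, so the contribution of $\overline{\Gamma}_J$ to the graded Euler characteristic is
\[
(-1)^{|J| - n_+}\, q^{\,2n_+ - n_- - |J|}\,\mathrm{qdim}\bigl(\mathcal{T}(\overline{\Gamma}_J)\bigr).
\]
Next I would compute $\mathrm{qdim}(\mathcal{T}(\overline{\Gamma}_J))$. Since $\mathcal{T}$ is monoidal and both $\mathcal{A}_C$ and $\mathcal{A}_W$ have basis $\{1,X\}$ with $\deg(1)=-1$, $\deg(X)=1$, each of the two Frobenius structures contributes graded dimension $q+q^{-1}$, so
\[
\mathrm{qdim}\bigl(\mathcal{T}(\overline{\Gamma}_J)\bigr) = (q+q^{-1})^{c(\overline{\Gamma}_J)},
\]
where $c(\overline{\Gamma}_J)$ is the number of connected components of $\overline{\Gamma}_J$. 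By the web skein relations \eqref{skein_circle}--\eqref{skein_rembv}, the right-hand side is precisely the bracket value $\brak{\overline{\Gamma}_J} = \brak{\Gamma_J}$, since the resolution-simplification step only removes adjacent same-type bivalent vertex pairs, an operation that preserves $\brak{\,\cdot\,}$.

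Finally I would match signs and degree shifts with the state sum. Writing $a,b$ for the numbers of positive crossings resolved in the oriented (1-resolution) and singular (0-resolution) ways respectively, and $c,d$ for the analogous counts on negative crossings, the formulas~\eqref{eq:polyn_inv} show that each state $J$ contributes $(-1)^{b+d}\,q^{a+2b-c-2d}\brak{\Gamma_J}$ to $P_2(L)$. A short check using $a+b = n_+$, $c+d = n_-$, and $|J| = a+d$ yields
\[
a + 2b - c - 2d = 2n_+ - n_- - |J|, \qquad b + d \equiv |J| - n_+ \pmod{2},
\]
so the state-sum coefficient of $\brak{\Gamma_J}$ agrees exactly with the coefficient computed above. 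Summing over all $J \subseteq I$ gives $P_2(L)$, completing the identification.

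The step I expect to carry the real content is the graded dimension computation $\mathrm{qdim}(\mathcal{T}(\overline{\Gamma}_J)) = \brak{\Gamma_J}$, since it is here that one must use both the tensorial structure of $\mathcal{T}$ and the fact that the resolution-simplification (which uses the isomorphisms of Remark~\ref{rem2}) does not alter the bracket polynomial. Everything else is essentially bookkeeping of signs and shifts, and the proposition then follows at once from the general fact $\chi_q(\mathcal{H}(D)) = \chi_q(\mathcal{C}(D))$.
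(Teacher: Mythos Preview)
Your argument is correct and is precisely the explicit unwinding that the paper leaves implicit when it says the proposition ``follows at once from construction'': you identify the cohomological degree and grading shift of each summand, compute $\mathrm{qdim}(\mathcal{T}(\overline{\Gamma}_J))=(q+q^{-1})^{c(\overline{\Gamma}_J)}=\brak{\Gamma_J}$ via the skein relations and the resolution-simplification step, and then match the signs and $q$-powers with the state sum~\eqref{eq:polyn_inv}. There is nothing to add; this is exactly the intended (and only) route.
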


\noindent \textbf{Final conclusion and question.} Our construction together with Proposition~\ref{prop:functor T and local relations} imply that the link cohomology provided in this section is isomorphic to the universal dot-free $sl(2)$ foam cohomology. In particular, we gave a description of the universal $sl(2)$ foam cohomology using a 2-dimensional TQFT characterized by an identical twin Frobenius algebra.

 Is there a more natural $sl(2)$ TQFT (co)homology construction that takes as inputs webs with arbitrarily many pairs of bivalent vertices---thus a construction that doesn't require first a vertex-reduction (resolution-simplification)? And if so, what is the associated algebra structure of that TQFT?

\end{document}